\def\ps@pprintTitle{%
   \let\@oddhead\@empty
   \let\@evenhead\@empty
   \let\@oddfoot\@empty
   \let\@evenfoot\@oddfoot}
\newtheorem{propo}{Proposition}
\newtheorem{defin}{Definition}
\newtheorem{rem}{Remark}
\newtheorem{teorema}{Theorem}
\newtheorem{example}{Example}
\title{Branch \& Solve for Hub Location}
\author[UCA]{Elena Fernández}
\ead{elena.fernandez@uca.es}
\author[UCA]{Nicol\'as Zerega}
\ead{nicolas.zerega@uca.es}
\address[UCA]{Departamento de Estadística e Investigación Operativa, Universidad de Cádiz, República Saharaui 11519, Cádiz, Spain}
\begin{document}
    \begin{abstract}
        This paper introduces a new formulation and solution framework for hub location problems. The formulation is based on 2-index aggregated flow variables and incorporates a set of aggregated demand constraints, which are novel in hub location. With minor adaptations, the approach applies to a large class of single- and multiple-allocation  models, possibly incorporating flow bounds on activated arcs. General-purpose feasibility and optimality inequalities are also developed. Because of the small number of continuous variables, there is no need to project them out, differentiating the method from solution algorithms that rely heavily on feasibility and optimality cuts. The proposed Branch \& Solve solution framework leverages the nested structure of the problems, by solving auxiliary subproblems at selected nodes of the enumeration tree. Extensive computational experiments on benchmark instances from the literature confirm the good performance of the proposal: the basic version of the algorithm is able to solve to proven optimality instances with up to 200 nodes for several hub location families.
    \end{abstract}
    \maketitle
    \graphicspath{{../Figures/}}
    \allowdisplaybreaks

    \section{Introduction}\label{sec:Intro}
	Hub location problems (HLPs) have been intensively studied over the past few decades, driven by both their theoretical interest and their relevant applications. Core models, such as variations of $p$-median or $p$-center, have gradually evolved into more sophisticated and realistic models incorporating additional features and releasing earlier simplifying assumptions. The growing interest in HLPs within the research community is evident from the number of recent surveys and book chapters dedicated to these models {\citep[see, e.g.,][]{Alumur2021,CampbellOkelly25,ContrerasOkelly}}.

	HLPs involve two levels of decisions. At the strategic (higher) level, the structure of the \textit{solution network} must be decided. This includes selecting a set of hubs to activate and establishing interhub links that determine the \textit{backbone network} as well as \textit{access and distribution} arcs connecting non-hub nodes to the backbone network. At the operational (lower) level, it is necessary to determine a routing path through the established network for each \textit{commodity}, defined as an origin/destination (OD) pair with a specific service demand. A typical objective function in HLPs is to minimize the total cost, which comprises both the  setup (activation) costs of the hubs and interhub links and the costs for routing the commodities. Indeed, variations or alternative objectives exist, particularly in cases where strategic decisions may not incur setup costs.

	One of the main challenges in formulating HLPs, common across virtually all models, arises from  service demand, which links pairs of users within a given network \citep{ContrerasFernandez12}. For each OD pair with a demand, a specific flow must be routed through a subset of hubs, whose location and inter-connections are part of the decision-making process. As is typical in network design, binary decision variables are used to model the solution network, whereas continuous variables usually suffice for the lower-level routing decisions. Despite efforts to develop tight formulations with a small number of continuous variables \citep[see, e.g.][]{LabbeYaman2004,ContrerasFernandez14,Espejoetal23}, most HLP formulations involve a large number of such variables. In particular, most formulations use either 4-index \textit{path variables} or 3-index \textit{flow variables}. While formulations with 4-index path variables usually produce very tight Linear Programming (LP) bounds, due to their large number of variables, they scale poorly with instance size, both in terms of computational efficiency and memory requirements. On the other hand, formulations with 3-index flow variables  are typically more scalable but produce weaker LP bounds so they may become inefficient already for moderate size instances. This  burden can be partially mitigated with Benders-type reformulations \citep{Benders62}, which are stated in terms of the discrete variables only, by projecting out  the continuous variables, whose domain is expressed in terms of a family of \textit{feasibility} and \textit{optimality} inequalities, exponential in number. Such reformulations have become the state of the art for many HLP variants \citep[see, e.g. ][]{Contrerasetal11,deCamargo2009,Taherkhanietal20,Wandelt22,Espejoetal23}.

	In this paper we introduce an alternative framework for tackling a broad class of HLPs, which combines a novel mathematical programming formulation with a tailored solution strategy. The key features of the formulation are the following:

	\begin{itemize}
		\item Integrated structure: The formulation combines binary first-level variables to define the topology of the solution network and continuous lower-level  2-index variables to represent 
        flows along its arcs. These variables 
        are an aggregation of the 3-index flow variables 
        often used in the literature. Flow conservation is enforced \textit{via} standard flow-balance constraints. Unlike some existing models, our approach does not restrict the number of interhub arcs used in the routing paths.
		\item Reduced number of continuous variables: Thanks to the aggregate flow modeling, the number of continuous variables is kept low, eliminating the need for projecting them out, which is  typically used in Benders-type decompositions.
		\item Tight lower bounds: The formulation yields tight LP relaxations, largely due to a class of aggregated demand constraints. To the best of our knowledge, these constraints are novel in the HLP literature, and significantly enhance the formulation.
		\item Flow feasibility via logical constraints: When necessary, the feasibility of the obtained flows can be enforced through additional families of logic-based constraints.
		\item High versatility: The formulation framework is flexible and can be applied to a wide range of HLP variants. With minimal modifications, it is valid for both single and multiple allocation (SA and MA, respectively), and can be easily extended to incorporate flow bounds on activated arcs or capacity constraints.
	\end{itemize}

	The solution strategy that we propose, referred to as \textit{Branch \& Solve (B\&S)}, leverages the nested structure of HLPs. It emerges from the observation that, once a solution network is fixed, its associated lower-level subproblem is \textit{easy} to solve. In the absence of capacity or other additional constraints, this subproblem reduces to finding a shortest path for each commodity. When bounds or capacity constraints on the arcs are introduced, the subproblem generalizes to a multi-commodity flow problem. In either case, optimal flows through the network can be efficiently obtained.

	B\&S can be framed within the Branch \& Check  algorithmic framework \citep{Thorsteinsson21}. This framework models the problem to solve using a combination of a \textit{basic} Mixed Integer Program (MIP) and a \textit{delayed} Constrained Linear Program (CLP). The MIP part may also include an explicit formulation of the CLP or a relaxation of it. The solution method involves exploring an enumeration tree, where the LP relaxation of the MIP part is solved at each node, and branching is carried out on the discrete variables. The \textit{delayed}  CLP is solved only at selected nodes, based on custom criteria. The solutions obtained from the delayed CLP can be used to derive bounds or generate cuts, which may help strengthen the MIP part. Branch \& Check generalizes both Branch \& Bound (B\&B) and Benders Decomposition (BD), encompassing them as special cases within a unified framework.
	\begin{itemize}
		\item At one end, 
        B\&B consists solely of a \textit{basic} MIP part, with no \textit{delayed} CLP part.
		\item At the other end, in BD, the \textit{basic} MIP part is the master problem, which may initially be very \textit{thin} and is gradually enriched with feasibility and optimality cuts derived from the \textit{delayed} part.
	\end{itemize}

	The proposed B\&S resembles B\&B, in that the MIP part is not enriched with cuts derived from the \textit{delayed} CLP part. However, unlike standard B\&B, the MIP of B\&S evolves dynamically. Since the MIP includes a family of inequalities whose number is exponential in the number of nodes of the input graph, in B\&S, this part is handled using a Branch \& Cut (B\&C) approach. The CLP part does not influence the MIP part explicitly; instead, it contributes by providing feasible solutions that are optimal  at selected nodes of the search tree. Moreover, nogood cuts are added at such nodes to help the solver partition the solution space.

	This algorithmic proposal offers an alternative to traditional Benders-type methods, where feasibility and optimality cuts are fundamental to the solution process. Even when cut generation is limited to integer solutions of the master problem, many infeasible nodes may have to be explored before reaching a node producing a feasible solution network. Furthermore, while the generated cuts are violated at the nodes where they are identified, their usefulness may diminish at other nodes, as a different network configuration will yield different routing paths for at least some commodities.

	Hence, we choose to alleviate the burden for generating and incorporating feasibility and optimality cuts. Instead, we retain the 2-index low level continuous variables within the MIP formulation, which do not need to be projected out, and we solve  with \textit{ad hoc} algorithms the delayed CLP part (subproblems associated with feasible solution networks). This approach proves particularly effective given that the bounds produced by the \textit{basic} MIP part are already quite tight.

	Despite its simplicity, B\&S offers some advantages. The main ones are: $(i)$ the original HLP can be solved to optimality using an enumeration tree that only explores the CLP at nodes where the MIP produces a feasible solution network; and $(ii)$ the method remains valid for any formulation that produces feasible first-level solutions, even if lower-level feasibility is not explicitly enforced. This enables the use of formulations where the lower-level domain is relaxed, accelerating the exploration of the enumeration tree without compromising the quality of the solutions.

	Of course, the effectiveness of a B\&S algorithm largely depends on the chosen formulation, particularly, on the quality of the first-level bounds and solutions obtained at early stages of the search, which are the main drivers for the overall size of the enumeration tree. We adopt the proposed flow-based formulation described earlier, which uses a \textit{small} number of continuous variables, eliminating the need for projecting them out. Thanks to the inclusion of the aggregated demand constraints, tight lower bounds and high quality incumbent solutions are obtained at early stages of the search. This, in turn, results in small enumeration trees and improved overall performance.

	The main contributions of this paper can summarized below:
	\begin{itemize}
		\item We introduce a new formulation, based on 2-index flow variables that applies to a large class of HLPs and produces tight LP bounds. 
        It is rather versatile: with minor variations, it applies to both SA and MA policies, and  it allows for bounds on the flows through the arcs.
        \item We in introduce a family of feasibility Benders-type cuts, which guarantee the feasibility of the aggregated flows trough the arcs.
        \item For the special case where optimal routing paths use  at most one interhub arc, we provide an alternative formulation based on optimality inequalities.
		\item We design a B\&S solution framework based on the proposed formulation. We are not aware of any enumerative method for HLPs based solely on the solution of an (\textit{easy}) auxiliary subproblem to determine optimal flows at the nodes with binary values for the design variables.
        \item {In general, the proposed framework produces small enumeration trees. Its effectiveness is assessed through extensive computational tests on several HLP families under both SA and MA policies. The framework solves HLPs with up to 200 nodes within a time limit of 7200 seconds, outperforming a Branch\&Cut method based on the feasibility Benders-type cuts developed on the proposed formulation and the best-known results compiled in \cite{Wandelt22}.}
	\end{itemize}

	The remainder of this paper is structured as follows. The most relevant literature related to our work is reviewed in Section \ref{sect:literature}. Section \ref{sec:notation} introduces the notation we will use throughout the paper and formally defines the problems that we address. Section \ref{sec:formula} presents the different components of our modeling template and illustrates some of its characteristics.
    Sections \ref{sec:FeasBenders} and \ref{sec:1-inter} focus, respectively, on the feasibility Benders-type inequalities, and the formulation based on optimality constraints for the special case with at most one interhub arc. The computational tests that we have carried out to assess the effectiveness of our proposal are described in Section \ref{sec:compu}, where the obtained numerical results are summarized and analyzed. The paper closes in Section \ref{sec:conclu}  with some conclusions and avenues for future research.
    
    \section{Literature review}\label{sect:literature}
	There is a broad, and increasingly rich, literature on hub location. Seminal work \citep[see, e.g.][]{Okelly86,Okelly87,Aykin90,Okelly92,Campbell94,Campbell96} and subsequent developments \citep[see, e.g.][]{Ernst96,Skorin96,Ernst98,Ebery00,Boland04,Hamacher04,LabbeYaman2004,Marin05,Marin-et-al06} focused mainly on the development of tight formulations for \textit{fundamental} models. With few exceptions \citep[see, e.g.][]{Campbell94}, early models considered objective functions that minimized the costs for routing the commodities and, in some cases, the setup costs for activated hubs; still, they ignored the  setup costs for activated interhub links. As the field evolved, more sophisticated models were introduced progressively, with more general objective functions involving, for example, the setup costs of activated interhub links \citep[see, e.g.][]{AlumurKaraKarasan09,ContrerasFernandez14,deCamargo2017}. Prize-collecting models, entailing decisions for the commodities to serve  also emerged \citep[see, e.g.][]{Alibeyg16,Alibeyg18,TaherkhaniAlumur19,Taherkhanietal20}. Alternative objectives that have been considered in the literature include center \citep{ErnstCenter09}, cover \citep[][]{AlumurKaraCover09,AlumurKaraKarasan09,ErnstCover18}, or ordered median \citep{PuertoOM11,PuertoOM13}. The scope of research has also expanded to include other topics such as  hub-arc location \citep{CampbellHubArcII,CampbellHubArcI}, capacity issues \citep{ContrerasDiaz09,ContrerasDiaz11,Ebery00,ElhedhliWu10}, backbone networks with specific topologies \citep{LabbeRingStar,ContrerasTreeofHubs}, and the modeling of economies of scale \citep{OkellyBryan,Kimms06,LuerMarianov,Dominguez24}. The interested reader is referred to \cite{AlumurKara2008,CampbellOkelly25,ContrerasOkelly,Alumur2021} for comprehensive overviews of models, applications, and current research trends on the topic.
    
    Next, we focus on existing work on topics related to the main contributions of this paper. We review the key modeling techniques for uncapacitated hub location problems for both SA and MA,  along with the main exact solution approaches that have been developed for solving these models. Unless otherwise stated, we restrict our discussion to median-type minimization  problems. To distinguish among variations or extensions we adopt the following terminology:
    \begin{itemize}
        \item $p$-median: Models with a fixed number $p$ of hubs to activate, where the objective function accounts only for the total routing costs of the commodities.
        \item $H$-median: Models where the number of hubs to activate is not fixed, and the objective function includes setup costs for the activated hubs, in addition to the commodities routing costs.
        \item $G$-median: An extension of $H$-median that also includes setup costs for the activated interhub links in the objective function.
    \end{itemize}

	As noted above, most early studies focused on $p$-median or $H$-median variants. In these models, any arc connecting two activated hubs can be used for routing commodities  without incurring an activation cost. Then, under the assumption that routing costs satisfy the triangle inequality, it is easy to show that there exists an optimal solution in which each routing path uses at most one interhub link. This optimality property became the pillar for the first HLP formulations for both SA and MA. For SA, initial formulations were quadratic \citep[see,][]{Okelly86,Okelly87,Aykin90,Skorin96,LabbeYaman2004}. They used binary variables $z_{ik}$ indicating the (single) allocation of node $i$ to activated hub $k$ (with $z_{kk}$ determining whether node $k$ is an activated hub). Using these variables, the demand from $i$ to $j$ will be routed through path $i-k-m-j$ provided that $i$ and $j$ are allocated to activated hubs $k$ and $m$, respectively (i.e., $z_{ik}=z_{jm}=1$). Hence, this routing path can be modeled as the product $z_{ik}z_{mj}$. While this modeling \textit{logic} is elegant and simple, the resulting quadratic terms are algorithmically challenging to handle. This opened the way to formulations with 4-index variables where the quadratic terms are  linearized by defining \textit{path} variables $x_{ijkm}=z_{ik}z_{mj}$ \citep[see, e.g.][]{Campbell94,Campbell96,Skorin96,LabbeYaman2004}. Numerical results from computational experiments with these formulations showed that they  usually yield very tight LP bounds. Still, due to the their large number of variables, such formulations do not scale well with instance size, neither in terms of effectiveness nor memory requirements. Benders-type reformulations \citep{Benders62} offered an alternative for overcoming this difficulty by projecting out these variables \citep{LabbeYaman2004,Espejoetal23}. These methods, widely used in other HL variants, will be further discussed later in this section. More recently, \cite{Meier2018} proposed a novel approach for linearizing the quadratic terms in $p$-median models, referred to as \textit{Euclidean projection method}, which applies when distances between the locations of potential hubs are Euclidean. The method is based on a formulation with two sets of decision variables: binary SA and continuous 2-index variables that indicate the flows through the interhub links. An alternative approach to mitigate the difficulties caused by the large number of 4-index variables was proposed in \cite{Ernst96} where 3-index flow variables  are used in conjunction with the SA variables. In this formulation, the flow variables $f^i_{km}$ are defined for each triplet of nodes in the network and indicate the flow originating at node $i$ that traverses the interhub arc $(k,m)$. Flow balance constraints are imposed, for each pair of nodes $i, k$, to regulate the flow with origin at node $i$ passing through hub $k$.

	In the context of MA models, activated hubs  are identified with  binary decision variables $z_k$ (instead of $z_{kk}$ as in SA). Now, the routing paths of commodities sharing the same origin (resp. destination) may use different \textit{first} (resp. \textit{second}) hubs, although path variables $x_{ijkm}$ remain as a natural choice for modeling routing decisions \citep[][]{Campbell94,Campbell96,Hamacher04,Marin05,Marin-et-al06,Contrerasetal11}. As in SA models, these formulations yield tight LP bounds but face similar scalability limitations. Not surprisingly, the alternatives proposed  for overcoming them are also similar. \cite{Ernst98,Ebery00,Boland04} proposed formulations with 3-index flow variables $f^i_{km}$,  as defined above, along with additional variables for the flows through non-interhub arcs. \cite{Garcia2} use a different modeling approach to develop a formulation based on 2-index continuous \textit{radius} variables, representing the cost of the routing paths connecting OD pairs. 

	Unlike the models discussed so far, HLPs with $G$-median objectives, accounting for activation costs of interhub links, may have (unique) optimal solutions that induce incomplete backbone networks. Thus, optimal routing paths may involve multiple interhub links. This feature introduces additional modeling difficulties, as reflected in the related literature, which is significantly smaller than that with complete backbone networks, even if the trend is currently changing. Some studies with $G$-median objectives impose the simplifying assumption that service routes traverse at most one interhub arc. For this simplified model, \cite{ContrerasFernandez14} introduced a formulation  based on supermodular properties,  using 2-index variables only, which applies to several  HLP classes, including hub-arc location. Interestingly, for the particular case of the $p$-median, the \textit{supermodular} formulation coincides with the \textit{radius} formulation of \cite{Garcia2}.
	
	General $G$-median models imposing no restriction on the length of service routes have been studied as well. To the best of our knowledge, the first such model under a SA policy was introduced in \cite{AlumurKaraKarasan09}, who proposed a formulation based on 3-index flow variables $f^i_{km}$. The MA case has been addressed in \cite{Okellyetal15,deSa18}. Both works allow for direct OD shipments between non-hub nodes (\textit{bridge arcs}). \cite{Okellyetal15} fix the number of activated hubs and use 3-index flow variables analogous to those of \cite{AlumurKaraKarasan09}. In contrast, \cite{deSa18} use continuous variables, with values in $[0, 1]$, to represent the fractions of OD demand routed through the different types of arcs.
    
	The formulations that we propose use flow variables to represent the total flow through the different types of arcs (access, interhub, and distribution) and flow balance constraints to regulate flow circulation at the nodes. These variables and constraints can be interpreted as aggregated versions of the standard 3-index flow variables and  balance constraints found in the literature. This aggregation no longer ensures that the demand of each individual OD pair is correctly routed through the backbone network. With the purpose of reinforcing the flows routed through solution networks, we introduce a family of aggregated demand constraints, which, to the best of our knowledge, are novel for HLPs. They impose that the total flow sent from any subset of nodes $S\subset V$ to its complement $V\setminus S$ be at least equal to the total demand from the nodes in $S$ to the nodes in $V\setminus S$. While these constraints substantially strengthen the formulation, they are yet not sufficient to guarantee the feasibility of the flows through the solution network. For this reason, the formulation is completed with several new families of logic-based constraints.

	We next turn our attention to the main solution approaches developed for uncapacitated HLPs similar to the ones that we address in this work. For approaches were Lagrangean relaxation techniques are used to derive lower bounds or are integrated within branch \& price algorithms, we refer the  reader to \cite{PirkulShilling98,ContrerasDiaz09,ElhedhliWu10,ContrerasDiaz11}. Off-the-shelf solvers are used for the solution of early formulations with 4- and 3-index  variables. Typically, instances from the CAB and AP\footnote{See  \cite{Okelly87} \cite{Ernst96}, respectively} data sets are used or adapted. Exact methods are often compared with \textit{ad hoc} heuristics, which we do not analyze here. Outstanding results are reported by \cite{Skorin96} with their 4-index formulation for  SA and MA $p$-median instances with up to 25 nodes. The performance of a 3-index formulation for SA $p$-median is analyzed in \cite{Ernst96}  for instances with up to 50 nodes. For MA models using 3-index formulations, results have been reported for  instances with up to 50 nodes, first for $p$-median in \cite{Ernst98} and later in \cite{Boland04} also for $H$-median. Regarding MA $H$-median models, \cite{Marin-et-al06} present excellent results with their reinforced 4-index formulations, for  instances with up to 30 nodes.  For these models, \cite{Hamacher04,Marin05} analyze the impact of inequalities derived from polyhedral analyses. For $G$-median models with arbitrarily long routing paths, \cite{AlumurKaraKarasan09} test their 3-index formulation using CAB instances with up to 25 nodes as well as another set of instances with 81 demand points from the Turkish network used in \cite{Yaman07}. Their results show that small instances can be optimally solved without much difficulty, but highlight the need for more sophisticated solution techniques for larger instances.

	Shortly thereafter, Benders reformulations (\citeyear{Benders62}) became increasingly popular for addressing network optimization problems involving two sets of decision variables, one discrete and one continuous \citep[e.g.][]{CORDEAU2019,FischettiLjubicSinnl17}. The general principle behind these methods is to project out the continuous variables and express their domain as a family of inequalities (Benders cuts) that involve discrete variables only. This principle leads to two alternative algorithmic frameworks. One is to use an iterative cutting plane algorithm, where a master problem is solved at each iteration, and cuts violated by the current solution are generated and added to the master. The other one is to use a B\&Benders-Cut  (BBC) approach, where a single enumeration tree is explored and Benders cuts are separated as in a classical B\&C procedure. Both frameworks have been applied to HLPs. For instance, \cite{Contrerasetal11} developed an iterative cutting plane  algorithm based on a 4-index formulation of the $H$-median and solved instances with up to 500 nodes. Nonetheless, BBC approaches have become predominant in the field \citep[e.g.][]{Espejoetal23,deCamargo2009,deCamargo2017,deSa18}. Typical refinements applied to improve the performance of such methods include the use of aggregated Benders cuts \citep[e.g.][]{deCamargo2009}, the generation of tighter Pareto-optimal cuts \citep[e.g.][]{Taherkhanietal20}, and the incorporation of heuristic methods to identify optimal or near/optimal solutions at early stages of the solution process \citep[e.g.][]{Contrerasetal11}. An empirical comparison of the computational performance of these methods across different HLPs is presented in \cite{Wandelt22}.

	In this work we develop an enumeration method tailored to the formulation we propose, which leverages the nested structure of HLPs. The approach is motivated by the observation that the formulation typically produces good-quality solution networks early in the search process. Once a solution network is given, an associated optimal lower-level solution can be efficiently obtained by solving an auxiliary subproblem. As we will show, this simple strategy, without any further enhancement, is highly effective in solving several HLP families, and proves competitive against highly specialized, state-of-the-art solution algorithms for the tested families.

    \section{Notation and preliminaries}\label{sec:notation}
	Consider a network $N=(V, A)$, with node set $V=\{1, 2,\dots, n\}$ and arc set $A$ representing the existing connections between pairs of nodes.  Let $E=\{ij: (i, j)\in A \text{ or } (j, i)\in A, i< j\}$ denote the set of (undirected) edges underlying the arc set $A$. Associated with each arc $(i,j)\in A$, there is a unit routing cost, denoted by $c_{ij}$. We assume that potential locations for hubs are placed at nodes of the network. For each potential location $k$,  we denote by $F_k$ the setup cost for activating a hub at node $k$. We also assume that some edges can be activated as \textit{interhub edges} (or just \textit{hub edges}). Any potential hub edge $km$ can be activated incurring a cost $G_{km}$, provided that its two endnodes $k$ and $m$ are activated as hub nodes as well. We assume that: ($i$) $N$ is a complete network, i.e. for all $i, j\in V$, $i\ne j$, arcs $(i,j), (j,i)\in A$; ($ii$) the set of potential locations  for hub nodes coincides with $V$; and ($iii$) the set of potential interhub edges coincides with $E$. These assumptions can be made without loss of generality since arbitrarily large routing costs $c_{ij}$ can be assigned to non-existing arcs, and arbitrarily large setup costs $F_k$ and $G_{km}$, respectively, can be assigned to nodes that are not potential hub nodes and to edges that are not potential hub edges. To alleviate notation, in the remainder of this paper any edge $km\in E$, $k<m$ will be indistinctively denoted by $mk$.
	
	Let $H\subset V$  be a given set of activated hubs and $\overline E\subseteq \{ij: i,j\in H\}$, a given set of activated interhub edges connecting hubs of $H$, not necessarily complete. The network $N_{\mathcal B} = (H, A(\overline E))$, where  $A(\overline E)$ is the set of arcs associated with $\overline E$, will be referred to as \textit{backbone network induced by $\overline E$}.

	Service demand is given by a set of commodities defined over pairs of users, indexed by a set $R$. Let $\mathcal{D}=\{(o^r, d^r, w^r): r\in R\}$ denote the set of commodities, where each triplet $(o^r, d^r, w^r)$ indicates that an amount of flow $w^r\ge 0$ must be routed  from origin node $o^r\in V$ to destination node $d^r\in V$. When the context is clear, we will simply use $o$ and $d$ to refer to the origin and destination of commodity $r$, respectively. If necessary, to make the endnodes explicit, we will write $w_{od}$ instead of $w^r$. Without loss of generality, we will assume that the graph induced by the commodities with $w^r>0$ is connected; otherwise, each connected component could be treated as an independent subproblem. Let $R^+_i=\{r\in R: o^r=i\}$ and $R^-_i=\{r\in R: d^r=i\}$ denote the index sets of commodities with origin and destination at a given node $i\in V$, respectively. For a given node $i\in V$, $O_i=\sum_{r\in R^+_i}w^r$ and $D_i=\sum_{r\in R^-_i}w^r$ denote the total amount of demand with origin and destination at $i$, respectively. Let also $\overline W=\sum_{r\in R}w^r$ denote the total demand through the network.

	The following additional notation will be used. For any node set $S\subset V$, the (undirected) cutset $\delta(S)=\left\{km\in E: (k\in S \text{ and } m\notin S) \vee (k\notin S \text{ and } m\in S)\right\}$ is the set of edges connecting two nodes on opposite sides of the bi-partition $(S, S^c)$. The di-cuts $\delta^+(S)=\{(i,j)\in A: i\in S, j\notin S\}$ and $\delta^-(S)=\{(i,j)\in A: i\notin S,  j\in S\}$ are the sets of arcs leaving $S$ and entering $S$, respectively. The total demand with origin in $S$ and destination in $S^c$ is denoted by $W(S:S^c)=W(\delta^+(S))=\sum_{r\in R: o^r\in S, d^r\notin S}{w^{r}}$. Finally, for any given vector $\overline f$ with support set $S$, and any subset $T\subseteq S$, we denote by $\overline f(T)=\sum_{a\in T}\overline f_a$.

	\begin{defin}\label{defi:rpath}
		Let $N_{\mathcal B} = (H, A(\overline E))$ be a given backbone network.
		\begin{itemize}
			\item A path $P^r \equiv o^r - k_1 - \dots - k_t- d^r$  is \textit{consistent} for commodity $r\in R$ if: $(a)$ every intermediate node is activated as a hub node, i.e., $k_i\in H$, for all $1\le i\le t$;  and $(b)$ each pair of consecutive intermediate nodes is connected by an activated interhub edge; i.e., $k_ik_{i+1}\in\overline E$, for all $1\le i\le t-1$.
			
			If the origin node $o_r$ is itself activated as a hub, i.e., $o_r\in H$, then $k_1=o^r$ and the first leg $o^r-k_1$ is considered a fictitious (empty) arc. Similarly, if the destination node $d_r$ is an activated hub, i.e., $d_r\in H$, then $k_t=d^r$ and the last leg $k_t-d^r$ is also considered a fictitious (empty) arc. When $o^r$ is not activated as a hub, then $o^r\ne k_1$ and arc $(o^r, k_1)$ will be called \textit{access arc}. Similarly, when $d^r$ is not activated as a hub, then $k_t\ne d^r$, and arc $(k_t, d^r)$ will be called \textit{distribution arc}.
			
			Arcs $(k_i, k_{i+1})$ will be called \textit{interhub arcs} (or \textit{hub arcs}). Abusing slightly notation, we will also say that hub edges belong to the backbone network.
			
			A consistent path for commodity $r\in R$ will also called an \textit{$r$-path}.
			\item The unit routing cost through $r$-path $P^r$ is $\gamma c_{o^rk_1}+\alpha \sum_{i=1}^{t-1} c_{k_i k_{i+1}}+ \theta c_{k_sd^r}$, where $0\le \alpha\leq 1$ is a given interhub discount cost factor and $\gamma, \theta >\alpha$ are factors applied to routing costs through access and distribution arcs, respectively.
		\end{itemize}
	\end{defin}

	\subsection{Problem definition}
		We study a \textit{generic} HLP (GHLP), in which a feasible solution is composed of a backbone network
		and a set of $r$-paths, one for each commodity $r\in R$. The network consisting of the backbone network together with the access and distribution arcs used in the $r$-paths is referred to as  \textit{solution network} and is denoted by $N_{\mathcal S}=\left(V,\, \cup_{\in R}P^r\right)$, where $P^r$ is the $r$-path of commodity $r$. The objective of the GHLP is to minimize the total cost, defined as the sum of: $(i)$ the set up costs of the activated hub nodes; $(ii)$ the set up costs of the activated hub edges; $(iii)$ the total cost for routing the commodities demands along their $r$-paths. According to the terminology introduced in Section \ref{sect:literature}, the GHLP corresponds to a $G$-median objective, but includes both $p$-median and $H$-median as special cases.
		
		As mentioned, in both $p$-median and $H$-median models, under the assumption that the routing costs satisfy the triangle inequality, there is an optimal solution in which the backbone network is a complete graph, and each $r$-path includes at most one hub arc. On the contrary, for $G$-median problems, the optimal backbone network may not be a complete graph, so optimal $r$-paths may involve multiple hub arcs, unless  additional restrictions are imposed on path lengths. In the following, unless  stated otherwise, we focus on the general $G$-median objective.
		
		We consider two different GHLP variants, which differ in how access and distribution arcs are defined when the origin or destination of a commodity is not an activated hub. In the \textit{single allocation} GHLP (SA-GHLP) each non-hub node is assigned to a single hub for both sending and receiving flow. Specifically, for any non-hub node $o$, all  commodities $r\in R_o^-$ (with origin $o$)   share the same access arc $(o,\,k_1)$, and all  commodities $r\in R_o^+$ (with destination $o$) share the same distribution arc $(k_1,\, o)$.	In this case, we say that node $o$ is allocated to hub $k_1$, which is the only hub node to which node $o$ can be connected. On the contrary, the \textit{multiple allocation} GHLP (MA-GHLP) allows a non-hub node to connect to multiple hubs. Then, different commodities with the same origin (or destination) may be routed through different first (or last) hubs. Thus, for a non-hub node $o$, the $r$-paths of commodities with origin $o$ may involve different access arcs, and the $r$-paths of commodities with destination $o$ may involve different distribution arcs. In this case, we  say that node $o$ is allocated to all the hubs involved in the access or distribution arcs of the affected  $r$-paths.
		
	\subsection{Solutions representation}
		Any $r$-path $P^r$ associated with a commodity $r\in R$ induces a flow of value $w^r$ through its arcs. This flow can be represented by a vector $f^r\in \mathbb R^{|A|}$ where $f^r_a=w^r$ for all $a\in P^r$ and  $f^r_a=0$ for all $a\in A\setminus \cup_{r\in R}P^r$. The routing cost of $f^r$ is $w^r\left(\gamma c_{o^rk_1}+\alpha \sum_{i=1}^{t-1} c_{k_i k_{i+1}}+ \theta c_{k_td^r}\right)$. Thus, a solution to the GHLP can be represented by a pair $(s, f)$, where the \textit{design solution}  $s$  defines the structure of the solution network (see Section \ref{sec:formula}) and the \textit{flow solution} $f$ specifies the amount of demand routed through each arc. Next we state some feasibility and optimality conditions for such flows, given a fixed design solution $s$. Let ${\mathcal S}$ denote the domain for feasible design solutions.
        
		\begin{defin}\label{def:flows}
			Let $s\in \mathcal S$ be a given design solution and $f\in\mathbb R^{|A|}$ be a given \textit{flow vector}.
			\begin{itemize}
				\item $f$ is feasible for $s$ if $f=\sum_{r\in R}f^r$ where for each commodity $r\in R$, $f^r$ is a flow of value $w^r$ through an $r$-path in the solution network associated with $s$.
				\item An $r$-path of minimum unit routing cost in the solution network associated with $s$, is called a \textit{shortest $r$-path} ($s$-$r$-path).
				\item $f$ is $R$-feasible for $s$ if $f=\sum_{r\in R}f^r$ where, for every $r\in R$, $f^r$ is a flow of value $w^r$ through an $s$-$r$-path in the solution network determined by $s$.
			\end{itemize}
		\end{defin}
        
		\begin{rem}\label{Remark1}
			\begin{itemize}
				\item According to Definition \ref{def:flows}, a \textit{feasible} flow is one that can be disaggregated into individual flows induced by $r$-paths, one for each commodity. This is clearly a necessary condition for the feasibility of the considered flows.
				\item We can moreover restrict the search for optimal GHLP solutions to those where the  flows are $R$-feasible, i.e., they are routed through shortest $r$-paths in the underlying solution network.
				
				Suppose, on the contrary that there is an optimal GHLP solution $(s^*, f^*)$ such that $f^*$ is a feasible flow that is not $R$-feasible. Then, there must exist some commodity $\overline r\in R$ such that the demand $w^{\overline r}$ is routed through an $r$-path, $P^{\overline r}$ that is not of minimum unit cost. Hence, the solution network induced by $s^*$ contains an $s$-$r$-path $\overline P^{\overline r}$ with a (strictly) better routing cost than $P^{\overline r}$, contradicting the optimality  of $(s^*, f^*)$.
			\end{itemize} 
		\end{rem}
	
		Hence, in the following, feasible flows that are not $R$-feasible will not be considered and we will state the GHLP with the following high-level formulation:
		\begin{align}
			(GHLP) \qquad \min\quad & val(s, f) &&\nonumber\\
			& f \in\Omega(s), &&\nonumber\\
			& s\in {\mathcal S} &&\nonumber
		\end{align} 
        
		\noindent were $\Omega(s)$ denotes the set of $R$-feasible flows associated with a given design solution $s\in\mathcal S$, and $val(s, f)$ is the objective function value of solution $(s, f)$, which can be separated in two terms: one for the design solution (setup costs of activated hubs and interhub edges), $v_{act}(s)$, and another one for the routing costs of the flows, $v_{rout}(f)$. That is, $val(s, f)= v_{act}(s)+v_{rout}(f)$.
	
		For a given design solution $s\in\mathcal S$, we define $\Omega(s)$ as the intersection of two domains $\Omega(s)=\mathcal F(s)\cap\mathcal{L}(s)$. $\mathcal F(s)$  contains \textit{reinforced aggregated flows}, not necessarily $R$-feasible, that satisfy balance constraints as well as \textit{aggregated demand constraints}. $\mathcal{L}(s)$ is the domain of $R$-feasible flows for $s$. Instead of expressing this domain in terms of inequalities to potentially reinforce the domain $\mathcal F(s)$, we will handle  $\mathcal{L}(s)$ implicitly, and use it for eliminating nodes of the enumeration tree and, possibly, for updating the incumbent solution and its value. The main idea of the approach that we propose is to integrate the following strategies:
        
		\begin{itemize}
			\item[$(1)$:] to express $\mathcal F(s)$ with a \textit{small} number of decision variables, so the relaxed problem
			\begin{equation*}
				(R-GHLP)\, \min\,_{s\in \mathcal S,\, f \in\mathcal F(s)} \left\{\,v_{act}(s)+v_{rout}(f)\,\right\}
			\end{equation*}
			\noindent can be efficiently handled without having to project out any set of decision variables, and
			\item[$(2)$:] to \textit{postpone} exploring $\mathcal{L}(s)$ to \textit{promising} design solutions $s\in \mathcal S$ only.
		\end{itemize}
		
		\noindent The rationale behind this solution framework relies in the following two observations:
			\begin{itemize}
				\item An $R$-feasible flow for a given solution network $s\in \mathcal S$ is not feasible for a different solution network $s'\in \mathcal S$, $s'\ne s$, since at least one commodity will be routed through a different $s$-$r$-path.
				\item   All $R$-feasible flows corresponding to the same solution network $s\in \mathcal S$ yield the same objective function value. That is, for a given $s\in \mathcal S$,  $v_{rout}(f)$ is the same for all $f\in\mathcal L(s)$. Hence, for each promising solution network $s\in\mathcal S$, it suffices to find just one $R$-feasible flow.
			\end{itemize}

\section{A new formulation for the GHLP} \label{sec:formula}
	We now present the details of a new formulation for the GHLP that uses 1- and 2-index decision variables only. With minor modifications it applies to the SA and MA variants explained above. The formulation combines binary variables associated with the design decisions for the elements activated in the solution network, with continuous variables representing aggregated flows circulating through the different types of arcs. In particular, the formulation explicitly models the backbone network as well as access and distribution arcs, while it does not make explicit the individual $r$-paths for the commodities. Routing costs (discounted or not, depending on the case) are expressed in terms of the overall flows that circulate through the different types of arcs.
	
	\subsection{The domain $\mathcal S$}\label{Domain:S}
		Let $s=(z, y, x^1, x^2)$ denote the components of a design solution $s\in {\mathcal S}$, which are associated with the following sets of binary decision variables:
		\begin{itemize}
			\item $z_{k}\in\{0, 1\}$, $k\in V$.  $z_{k}=1$ if and only if a hub is activated at node $k$.
			\item $y_{km}\in\{0, 1\}$, $km\in E$. $y_{km}=1$ if and only if hub edge $km$ is activated.
			\item $x^1_{ij}\in\{0, 1\}$, $(i, j)\in A$. $x^1_{ij}=1$ if and only if access arc $(i, j)$ is activated.
			\item $x^2_{ij}\in\{0, 1\}$, $(i, j)\in A$. $x^2_{ij}=1$ if and only if distribution arc $(i, j)$ is activated.
		\end{itemize}
		The design cost of a solution $s=(z, y, x^1, x^2)$ is given by $$v_{des}(s)=F(z)+G(y)=\sum_{k\in V}f_kz_k+\sum_{km\in E}g_{km}y_{km}.$$
	
		\noindent For the MA policy, the formulation that we use to characterize $\mathcal S$ is:
		\begin{subequations}
			\begin{align}
				& \begin{cases}x^1_{ij}+x^2_{ij}+y_{ij}\leq 1\\
					x^1_{ji}+x^2_{ji}+y_{ij}\leq 1
				\end{cases}  && ij\in E \label{rel_X-Y-0}\\
				& \begin{cases}x^1_{ij}+y_{ij}\leq z_j\\
					x^1_{ji}+y_{ij}\leq z_i\end{cases}   && ij\in E \label{rel_z-Y-1_1-0}\\
				& \begin{cases}x^2_{ij}+y_{ij}\leq z_i\\
					x^2_{ji}+y_{ij}\leq z_j \end{cases}  && ij\in E \label{rel_z-Y-1_2-0}\\
                & \begin{cases} x^1_{ij}+ z_i\leq 1 \\
					x^2_{ij}+z_j\leq 1 \end{cases}  &&  (i, j)\in A \label{rel_z-X-2_2-bis}\\
				& \begin{cases} \sum_{j\in V\setminus\{i\}}x^1_{ij}+ z_i\geq 1 \\
					\sum_{j\in V\setminus\{i\}}x^2_{ji}+z_i\geq 1 \end{cases}  && i\in V \label{new-MA}\\
				& z_{i} \in \left\{0,1\right\} && i\in V \label{domain_z-0}\\
				& x^1_{ij}, x^2_{ij} \in \left\{0,1\right\} && (i, j)\in A \label{domainX-0}\\
				& y_{km} \in \left\{0,1\right\} && km\in E. \label{domainY-0}
			\end{align}
		\end{subequations}

		Constraints \eqref{rel_X-Y-0} establish that each edge and associated arc can be activated in at most one of the three possible classes (access/distribution directed arc or interhub undirected edge). Constraints \eqref{rel_z-Y-1_1-0}-\eqref{rel_z-Y-1_2-0} regulate the relation of activated interhub edges and access/distribution arcs with activated hub nodes. They impose, that $i)$ both endnodes of interhub edges are activated as hub nodes, $ii)$ the destination node of an access arc is a hub node, and $iii)$ the origin node of a distribution arc is a hub node. By \eqref{rel_z-X-2_2-bis} just one of the two endnodes of an access or distribution arc is a hub, whereas \eqref{new-MA} impose that non-hub nodes are the origin of at least one access and one distribution arc.
	
		For SA, \eqref{new-MA} must hold as equality so \eqref{rel_z-X-2_2-bis} are no longer needed. Moreover, in this case,  $x^2_{ij}=x^1_{ji}$ must also hold for all $(i,j)\in A$,  the second inequality in each of the above blocks can be removed.
	
		\subsection{The domain for reinforced aggregated flow vectors  $\mathcal F(s)$}\label{Domain:F}
			Throughout this section we assume that $s\in\mathcal S$ is a given design solution which, for clarity, will be denoted as $\overline s=(\overline z,\, \overline y,\, \overline x^1,\, \overline x^2)\in{\mathcal S}$. Then, the components of any flow vector $f\in {\mathcal F(\overline s)}$ are denoted by $f=(t, \, h^1, \, h^2)$, and are defined through the following sets of continuous decision variables:
			\begin{itemize}
				\item $h^1_{ij}$: Total flow circulating through access arc $(i, j)\in A$.
				\item $t_{ij}$: Total flow circulating through interhub arc $(i,j)\in A$.
				\item $h^2_{ij}$: Total flow circulating through distribution arc $(i, j)\in A$.
			\end{itemize}
	
		In terms of the above decision variables, the routing cost of $f=(t, \, h^1, \, h^2)$ is
		\begin{equation*}
			v_{rout}(f)=\sum_{(i,j) \in A} c_{ij}\left (\gamma h^1_{ij}+ \alpha t_{ij}+ \theta h^2_{ij}\right).
		\end{equation*}
			
		Then, the domain $\mathcal F(\overline s)$ is determined by the following set of constraints:\vspace{-0.5cm}
        
		\begin{subequations}
			\begin{align}
				& (1-\overline z_i)\,O_i=\sum_{(i,j)\in A}h^1_{ij} &&  i\in V \label{leaves_i_H-0}\\
				& (1-\overline z_i)\,D_i=\sum_{(j,i)\in A}h^2_{ji} &&  i\in V \label{enters_i_H-0}\\
				&  O_i\,\overline z_i+\sum_{j\ne i}h^1_{ji}+\sum_{j\ne i}t_{ji}\notag \\
				& \qquad \qquad =D_i\,\overline z_i+ \sum_{j\ne i}h^2_{ij}+\sum_{j\ne i}t_{ij} &&  i\in V \label{Flow_Balance-0}\\				& (t+h^1+h^2)(\delta^+(S))\geq W(S:S^c) &&  S\subset V:\,\exists r\in R, \text{ s.t. } (o^r, d^r)\in\delta^+(S)\label{Demand-0}\\
				& w_{ij}\,\overline x^1_{ij}\leq h^1_{ij}\leq O_i\,\overline x^1_{ij} &&  (i, j)\in A \label{H_X2_1-0}\\
				& w_{ij}\,\overline x^2_{ij}\leq h^2_{ij}\leq D_j\,\overline x^2_{ij} &&  (i, j)\in A \label{H_X2_2-0}\\
				& (w_{km}+w_{mk})\,\overline y_{km}\leq t_{km}+t_{mk}\leq \overline W y_{km} &&   km \in E \label{f_Y-0} \\
				& t_{ij}, h^1_{ij}, h^2_{ij}\geq 0 &&  (i,j)\in A.\label{domain_F-H-0-0}
			\end{align}
		\end{subequations}
		Equations \eqref{leaves_i_H-0}-\eqref{enters_i_H-0} are flow balance constraints at non-hub nodes, which impose that when $i$ is not a hub, the overall demand with origin and destination at $i$ must be routed with flows of types $h^1$ and $h^2$, respectively. Flow balance at hub nodes is enforced by Constraints \eqref{Flow_Balance-0}. These constraints are the aggregation, over all origins, of the balance constraints with 3-index flow variables $f^i_{km}$ discussed in Section \ref{sect:literature} \citep[see, e.g. ][]{Ernst96}. Hence, they may produce flows that cannot be disaggregated into $r$-paths.	 In order to strengthen the domain of feasible flows we introduce the \textit{aggregated demand constraints} \eqref{Demand-0}, which  can be explained as follows. For any subset of nodes $S\subset V$, all $r$-paths corresponding to commodities $r\in R$ such that $o^r\in S$ and $d^r\in S^c$ must \textit{cross} the di-cut $\delta^+(S)$. Hence the total flow exiting $S$, must be at least equal to the total demand from $S$ to $V\setminus S$, i.e. $W(S:S^c)$. Although this set of constraints still do not guarantee that flows can be disaggregated in individual $r$-paths, it significantly restricts the domain of the flows and improves the LP bounds. To the best of our knowledge, these constraints are novel for HLPs.
	
		Inequalities \eqref{H_X2_1-0}-\eqref{H_X2_2-0} impose that the flows of types $h^1$ and $h^2$ solely circulate through access and distribution arcs, respectively, and, in each case, establish lower and upper bounds on the flows through such arcs.	Finally, constraints \eqref{f_Y-0} regulate the relationship between flows through interhub arcs and activated interhub edges, where $\overline W$ serves as a conservative upper bound on the maximum total flow that can circulate in the two directions of each edge. Note that, by changing the values of the coefficients in \eqref{H_X2_1-0}-\eqref{f_Y-0}, we can immediately impose lower or upper bounds on the flows that circulate through the activated arcs.
 
		For the SA policy, \eqref{Demand-0}--\eqref{H_X2_2-0}, can be reinforced to
		\begin{align}
			& (t+h^1+h^2)(\delta^+(S))\geq W(S:S^c)\notag\\
			&\qquad+\sum_{ij\in S}w_{ij}\sum_{k\in S^c}x^1_{ik}+\sum_{ij\in S^c}w_{ij}\sum_{k\in S}x^1_{ik} && S\subset V:\,\exists r\in \delta^+(S)\tag{\ref{Demand-0}-SA}\label{Ag:dem_reinf}\\
			& h^1_{ij}= O_i\overline x^1_{ij} &&(i, j)\in A \tag{\ref{H_X2_1-0}-SA}\\
			& h^2_{ij}= D_j\overline x^2_{ij} &&(i, j)\in A, \tag{\ref{H_X2_2-0}-SA}
		\end{align}
         
        \noindent where the right hand side of \eqref{Ag:dem_reinf} is extended to consider the demand of all the commodities with origin and destination at the same side of the cutset, whose non-hub origin is allocated to a hub on the opposite side of the cutset.

		Integrating  the formulations for the domains $\mathcal S$ and $\mathcal F(s)$ we obtain a formulation for R-GHLP, which is given in  Appendix \ref{appendix:R-GHLP}, for the MA policy. As mentioned,  this formulation may produce optimal solutions $(s^*, f^*)$ with $s^*\in \mathcal S$, $f^*\in\mathcal F(s^*)$, in which $f^*$ is not $R$-feasible. This is illustrated in the following example.
	
		\begin{example}\label{example1}
			Consider an MA-GHLP instance from the CAB dataset with $n=10$ and $\alpha=0.8$. Unit routing costs ($c_{ij}$) and commodities demands ($w_{ij}$) are given in Tables \ref{tab:cab10_c} and \ref{tab:cab10_w}, respectively.
		
			The R-GHLP formulation of Appendix \ref{appendix:R-GHLP} without the aggregated demand constraints, gives the solution $(s^1, f^1)$ depicted in Figure \ref{fig:cab10_nodmnd_flows}, with an objective value of $1.616\times 10^7$. The activated hubs and backbone edge are $z^1_4 = z^1_5=1$, $y^1_{45}=1$. The values of the flows $f^1$ are shown over the arcs; due to the symmetry of the routing costs, they are the same in both directions. This solution violates the aggregated demand constraint \eqref{Demand-0} for the node set $S=\{3,\,5,\,6,\,7,\,8,\,10\}$. In particular, the overall flow through the cutset $\left(t+ h^1+h^2\right)(\delta^+(S))= t(\delta^+(S))=452$, whereas $W(S:S^c)=10586$.
		
			When the aggregated demand constraints are included, the R-GHLP formulation of Appendix \ref{appendix:R-GHLP}  produces the solution $(s^2, f^2)$ depicted in Figure \ref{fig:cab10_dmnd_flows}, with an objective value of $2.301\times 10^7$. Now, the activated hubs and backbone edge are $z^2_5=z^2_9=1$, $y^2_{59}=1$, and the values of the flows $f^2$ are shown over the arcs. Even if these flows satisfy all the aggregated demand constraints, they still fail in being $R$-feasible. Note that, in this solution nodes 7 and 10 are allocated only to hub 5, and nodes 2 and 4 are allocated only to hub 9. Hence, in any feasible solution with design solution $s^2$ the overall flow through the interhub arc (5, 9) must be at least $w_{52}+w_{54}+w_{59}+w_{72}+w_{74}+w_{79}+w_{10,2}+w_{10,4}+w_{10,9}=8067$, which is greater than the flow $t^2_{59}=8051$.
		
			The optimal GHLP solution for this instance, $(s^*, f^*)$, has an objective value of $2.302 \times 10^7$. It has the same design solution as R-GHLP, $s^*=s^2$, and $R$-flows as depicted in Figure \ref{fig:cab10_feas_flows}.			
		\end{example}
	
	\subsection{Connectivity inequalities for the backbone network}
		Below we discuss the family of inequalities
		\begin{align}
			&Y(\delta(S))\geq z_i+z_j-1 && \forall S\subset V, i,j\in V \text{ s.t. } i\in S, j\notin S,\label{connect-backbone-0}
		\end{align}
         
		\noindent imposing the connectivity of the backbone network: for any node set $S$ that contains one activated hub in both sides of its cutset, at least one interhub edge must be activated in the cutset $\delta(S)$.
		
		First, we observe that \eqref{connect-backbone-0} are not necessarily valid neither for the SA nor the MA policies.
		Suppose the inequality is violated for a given solution $(\bar s, \bar f)$ with $\bar s\in\mathcal S$, and $\bar f\in \mathcal F(s)$. Then there must exist $S\subset V$, $i\in S$,  $j\in V\setminus S$ such that $\bar y(\delta(S))=0$ and $\bar z_i=\bar z_j=1$. In fact, since $\bar y(\delta(S))=0$,  the inequality will also be violated by any pair of nodes, $k, m$ with $k\in S$,  $m\in V\setminus S$ such that $\bar z_k=\bar z_m=1$.
		Hence, 
        $$ (\bar t+\bar h^1+\bar h^2)(\delta^+(S)) = (\bar h^1+\bar h^2)(\delta^+(S))\leq W(S: S^c)-\sum_{\substack{k\in S,\, m\in S^c\\ \bar z_k=\bar z_m=1}} w_{km},$$
		where the inequality follows since the demand $w_{km}$ of any commodity with $k\in S$,  $m\in S^c$, such that $\bar z_k=\bar z_m=1$,  may \textit{cross} the dicut $\delta^+(S)$ through interhub arcs only, because both $k$ and $m$ are activated hubs. Therefore,
		$$w_{km}=0,\, \forall\, k\in S, m\in V\setminus S,\ \text{s.t. }\bar z_k=\bar z_m=1,$$
		\noindent since, otherwise, the aggregated demand constraint \eqref{Demand-0} would be violated.
		
		As shown in Figure \ref{fig:connect}, feasible solutions with the above characteristics may exist for  both SA and MA. That is: in principle, inequalities \eqref{connect-backbone-0} are not valid. Still, feasible solutions that violate these inequalities, are only possible for instances where some o/d pairs at different sides of a given cutset have no demand between them. While, in principle, this could be possible, the benchmark instances available in the literature have a strictly positive demand between every pair of nodes. For this reason, in the remainder of this paper we will include this family of inequalities in the definition of the domain $\mathcal S$.
		
	\subsection{The domain $\mathcal{L}(s)$ of $R$-feasible flows for a given solution $s\in\mathcal S$}\label{R-feas0}
		Let $G(\overline s)$ be the support graph induced by $\overline s=(\overline z,\, \overline y,\, \overline x^1,\, \overline x^2)\in{\mathcal S}$, which contains undirected edges induced by $\overline y$ and directed arcs induced by $\overline x^1$ and $\overline x^2$. To distinguish flows in $\mathcal F(\overline s)$ from $R$-feasible flows in $\mathcal L(\overline s)$, we keep the notation $\overline f=(\bar t, \bar h^1, \bar h^2)\in \mathcal F(\overline s)$ for the former  and denote the latter by $f(\overline s)=(t(\overline s),\,  h^1(\overline s), \,  h^2(\overline s))\in \mathcal L(\overline s)$.	Let also $P^r(\overline s)$ be the $s$-$r$-path for commodity $r$ in $G(\overline s)$.
	
		Below we detail two alternatives for finding $R$-feasible flows in $\mathcal{L}(\overline s)$ for a given $\overline s\in\mathcal S$ and for using them algorithmically:
		\begin{itemize}
			\item[$(i)$]  The first alternative is to find just one $R$-feasible flow $f(\overline s)\in \mathcal L(\overline s)$  by explicitly identifying an $s$-$r$-path, for every commodity $r\in R$, in the solution network induced by $\overline s$. Such paths can be easily found with shortest path algorithms (see,~e.g. \cite{Dijkstra}).
			
			After finding the $R$-feasible flow $f(\overline s)\in \mathcal L(\overline s)$, we can update the current incumbent solution to $(\overline s,\, f(\overline s))$ if $v_{act}(\overline s)+v_{rout}(f(\overline s))$ is smaller than the value of the current best-known solution, but we will not derive any optimality cuts from this information. The rationale for this alternative is that, if $\overline s$ is not an optimal design solution, then the information relative to $s$-$r$-paths for $\overline s$ may not be very relevant for finding $s$-$r$-paths relative to an optimal design solution (some $s$-$r$-path will necessarily change) so we may be overloading the formulation with unnecessary cuts.
			\item[$(ii)$] The second alternative is to proceed as above but, in addition, use information from $f(\overline s)\in \mathcal L(\overline s)$ to derive feasibility or optimality cuts for $R$-feasible flows. This alternative will be further developed in the next sections.
		\end{itemize}

\section{Feasibility Benders cuts for GHLP}\label{sec:FeasBenders}
    Benders cuts have been developed for various HLPs \cite[see, e.g.,][]{Contrerasetal11,deCamargo2009,Taherkhanietal20,Wandelt22}. These cuts have been derived for formulations with 3- or 4-index variables, with the purpose of  projecting out their large number of continuous variables. Our intention is not to project out the flow variables of our GHLP formulation, which have two indices only, but to derive feasibility Benders-type valid inequalities which can be used to enforce that the obtained flows are $R$-feasible for their underlying solution networks.
    
    Consider a feasible solution $(\bar s, \bar f)$ with  $\bar s=(\bar z,\, \bar y,\, \bar x^1,\,\bar x^2)$ and $\bar f\in\mathcal F(\bar s)$. To derive feasibility Benders cuts we define an auxiliary problem, in which we try to route jointly the individual commodities through the network $G(\bar s)$, using the values $\bar t$, $\bar h^1$, and $\bar h^2$ as capacities for the different types of arcs. For this, we define the following 3-index (flow) decision variables:
    \begin{itemize}
        \item $t^i_{km}$: flow with origin at node $i\in V$ routed through interhub arc $(k, m)\in A$.
        \item $g_{ik}$: flow routed through access arc $(i, k)\in A$.
        \item $h^i_{kj}$: flow with origin at node $i$ routed through distribution arc $(k, j)\in A$.
    \end{itemize}
    
    The auxiliary subproblem is:
    \begin{subequations}
        \begin{align}
        Aux_{(\bar s, \bar f)}\quad \min\ & \sum_{i\in V} \bigg[\sum_{k\in V\setminus\{i\}}\gamma c_{ik}g_{ik} +\notag \\
        & \qquad \sum_{(k, j)\in A} \left(\alpha c_{kj} t^i_{kj}+ \theta c_{kj} h^i_{kj}\right)\bigg] && \label{of_A2}\\
        \text{s.t.} & \sum_{k\in V: k\ne i}g_{ik}=O_i(1 - \bar z_i) &&  i\in V \label{i_access_A2}\\
        & \sum_{(i,j)\in A}\left(t^i_{im}+h^i_{ij}\right)=O_i\bar z_i &&  i\in V \label{Flow_Balance_A2}\\
        & \sum_{(k, j)\in A}h^i_{kj}=w_{ij}(1 - \bar z_j) &&  (i, j)\in R \label{leaves_j_H_A2}\\
        & \sum_{{(k,m)\in A: m\ne i}}\left(t^i_{km}+h^i_{km}\right)-\notag\\
        & \qquad \qquad g_{ik}-\sum_{(m,k)\in A}t^{i}_{mk}=-w_{ik}\bar z_k  &&  i, k\in V, k\ne i \label{Flow_Balance_A2_bis}\\
        & \sum_{i\in V: i\ne m}t^i_{km}\leq \bar t_{km} &&  (k, m)\in A \label{F-Delta_A2}\\
        & g_{ik}\leq \bar h^1_{ik} &&  (i, k)\in A  \label{bound_H_A2}\\
        & \sum_{i\in V: i\ne j}h^i_{kj}\leq \bar h^2_{kj}&&   (k, j)\in A  \label{leaves2_j_H_A2}\\
        & g_{ik},t^i_{kj}, h^i_{kj} \geq 0  &&  (k, j)\in A, i\in V.\label{domainA2}
        \end{align}
    \end{subequations}

    While \eqref{i_access_A2}-\eqref{Flow_Balance_A2_bis} are flow balance constraints that guarantee that all the commodities are routed through $r$-paths in the graph $G(\overline s)$, Constraints \eqref{F-Delta_A2}-\eqref{leaves2_j_H_A2} relate the value of the flows through the arcs with those of $(\overline t,\, \overline h^1,\, \overline h^2)$. Because of the objective function, when $Aux_{(\bar s, \bar f)}$ is feasible, the flows $t_{km}=\sum_{i\in V}t^i_{km}$, for all $(k,m)\in A$;  $h^1_{ik}=g_{ik}$ for all $(i,k)\in A$; $h^2_{kj}=\sum_{i\in V}h^i_{kj}$ determine an $R$-feasible flow for $\bar s$. Note that in this case the following feasibility conditions must hold:
    
    \begin{subequations}
        \begin{align}
            (i)\quad & \bar t_{km}\geq t_{km}(\bar s) && \forall (k, m)\in A \label{feas1}\\
            (ii)\quad & \bar h^1_{ik}\geq h^1_{ik}(\bar s) && \forall (i, k)\in A \label{feas2}\\
            (iii)\quad & \bar h^2_{kj}\geq h^2_{kj}(\bar s) && \forall (k, j)\in A \label{feas3}
        \end{align}
    \end{subequations}
    
    When some of the above bounding conditions does not hold, then $Aux_{(\bar s, \bar f)}$ will be unfeasible. In such a case, by analyzing its dual we may derive feasibility cuts. 	
    The dual of $(Aux)$ is:
    \begin{subequations}
        \begin{align}
            D-{Aux}_{(\bar s, \bar f)}  \hspace{8.pt} \max & \hspace{5.pt} \Phi(\Theta; \bar s, \bar f)\\
            &  \mu_{i}-\rho_{im}-\sigma_{im}\leq \alpha c_{im} &&  (i, m)\in A \label{D2_D3}\\
            & -\rho_{ik}+\lambda_i-\tau^1_{ik}\leq \gamma c_{ik} && (i, k)\in A \label{D2_D41}\\
            & \rho_{ik}+\pi_{ij}-\tau^2_{kj}\leq \theta c_{kj} &&  (k,j)\in A, i\in V\setminus\{k,j\}\label{D2_D4_1}\\
            & \mu_{i}+\pi_{ij}-\tau^2_{ij}\leq \theta c_{ij} &&  (i,j)\in A\label{D2_D4_2}\\
            & \sigma_{ij}, \tau^1_{ij}, \tau^2_{ij}\geq 0 &&  (i, j)\in A \\
            & \mu_{i}, \lambda_{i}\in\mathbb{R}&& i\in V \\
            & \pi_{ij}\in\mathbb{R} && (i, j)\in R.  \label{domainD}
        \end{align}
    \end{subequations}
    \noindent where $\Theta = (\lambda, \mu, \pi, \rho,\sigma,\tau^1,\tau^2)$. Then, the objective is
    $$\Phi(\Theta; \bar s, \bar f)=\sum_{i\in V}\left[O_i\left(1-\bar z_i\right)\lambda_i + O_i\bar z_i\mu_{i}\right] + \sum_{(i, j)\in R}w_{ij}(1-\bar z_j)\pi_{ij}-\sum_{(i, j)\in R}w_{ij}\bar z_j \rho_{ij}-$$
    $$\hspace{2.5 cm}-\sum_{(k, m)\in A}\left(\bar t_{km}\sigma_{km} + \bar h^1_{km}\tau^1_{km} +\bar h^2_{km}\tau^2_{km}\right).$$
    
    When $Aux_{(\bar s, \bar f)}$ is infeasible, $D-{Aux}_{(\bar s, \bar f)}$ will be unbounded. Let $\mathcal{E}=\{\Theta^\varepsilon=\left(\lambda^\varepsilon, \mu^\varepsilon, \pi^\varepsilon, \rho^\varepsilon,\sigma^\varepsilon, {\tau^1}^\varepsilon, {\tau^2}^\varepsilon\right): \varepsilon\in I^{\mathcal{E}}\}$ denote the set of extreme rays for $D-{Aux}_{(\bar s, \bar f)}$. Then, the following inequalities are valid for any feasible solution to GHLP:
    $$\Phi(\Theta^\varepsilon; \bar s, \bar f)\leq 0, \qquad \varepsilon\in I^{\mathcal{E}},$$
    \noindent where $\Phi(\Theta^\varepsilon; \bar s, \bar f)$ is the objective function value of $D-{Aux}_{(\bar  s, \bar  f)}$ for the extreme ray $\Theta^\varepsilon\in \mathcal{E}$.
    
    By substituting and rearranging terms, the above set of feasibility cuts can be rewritten as:
    \begin{align}
        &\sum_{(k, m)\in A}\left(\sigma_{km}^\varepsilon \bar t_{km}+ (\tau^1)^\varepsilon_{km} \bar h^1_{km} + (\tau^2)^\varepsilon_{km} \bar h^2_{km}\right) \nonumber\\
        &\qquad \geq \sum_{i\in V}\mu_{i}^\varepsilon O_i \bar z_i + \sum_{i\in V}\pi_i^\varepsilon O_i(1- \bar z_i)+\notag\\
        &\qquad \quad \sum_{i, j\in V: i\ne j}\rho_{ij}^\varepsilon w_{ij} (1-\bar z_j)- \sum_{i, k\in V: i\ne k}\lambda_{ik}^\varepsilon w_{ik} \bar z_k,\notag\\
        &\qquad \qquad \varepsilon\in I^{\mathcal{E}},\, \bar s\in\mathcal S,\, \bar f\in\mathcal F(\bar s). \label{Feas:D2}
    \end{align}

\section{The special case with at most one interhub arc: characterization of $\mathcal L(s)$}\label{sec:1-inter}
	In this section we  consider the special case of the GHLP when $r$-paths contain at most one interhub arc. We will focus on the $H$-median, although the developments of this section are also valid for the $G$-median with the additional constraint that service routes contain at most one interhub arc.
	
	Next, we derive optimality inequalities for feasible solutions $(s, f)$, $s\in\mathcal S$, $f\in\mathcal L(s)$, whose objective function value is better than that of a given solution $(\bar s, \,\bar f)$. Hence, we assume that a feasible solution $(\bar s, \,\bar f)$ is given, with $\bar s\in\mathcal S$, $\bar f\in\mathcal L(\bar s)$, and we will derive inequalities that must be satisfied by any feasible solution $(s,\, f)$ such that $v_{act}(s)+\,v_{rout}(f) < v_{act}(\bar s)+\,v_{rout}(\bar f)$.
	
	Let ${R}_{ij}(\overline s)$, ${R}^1_{oi}(\overline s)$, and ${R}^2_{jd}(\overline s)$ respectively denote the set of commodities such that $P^r(\overline s)$ uses interhub arc $(i, j)$, access arc $(o,i)$, and distribution arc $(j,d)$. For any $r\in R$, the unit routing cost through an $r$-path of the form $o^r - k - m - d^r$ will be denoted by $C_{km}^r= \gamma c_{o^rk}+\alpha c_{km}+ \theta c_{md^r}$.  We also define the following two sets:
	\begin{itemize}
		\item $Z^{r}(\overline s)=\{k\in V: C^r_{kk}< C^r(\overline s)\}$.
		\item $Y^r(\overline  s)=\{km\in E: \min\{C^r_{km}, C^r_{mk}\}< \min\{C^r(\overline s), C^r_{kk}, C^r_{mm}\}\}$.
	\end{itemize}
    
	$Z^{r}(\overline s)$ contains all potential hubs that would yield an $r$-path with one single intermediate hub node, whose routing cost is smaller than that of $P^r(\overline s)$, whereas $Y^r(\overline s)$ contains all potential interhub edges that would yield an $r$-path with one intermediate hub arc, whose routing cost is smaller than that of $P^r(\overline s)$ and also smaller than $C^r_{kk}$ and $C^r_{mm}$ (i.e., the $r$-path $o-k-m-d$ is better than paths $o-k-d$ and $o-m-d$). Since $P^r(\overline s)$ is an $s$-$r$-path in $G(\overline s)$, then $k\notin Z^{r}(\overline s)$ for all activated hubs (i.e., such that $\overline z_k=1$) and $km\notin Y^r(\overline s)$ for all activated hub edges (i.e., such that $\overline y_{km}=1$).
	
	In our analysis, without loss of generality, we make the assumption that when $C^r_{km}=C^r_{kk}$ the path with one single intermediate hub will be chosen.
	\begin{propo}\label{propo0}
		Let $\overline s\in {\mathcal S}$ be a given solution network and $f(\overline s)\in\mathcal L(\overline s)$ an $R$-feasible flow in $G(\overline s)$. Then, the following inequalities are valid for the $H$-median: 
		\begin{subequations}
			\begin{align}
			& t_{ij}+\sum_{r\in{R}_{ij}(\overline s)} w^r\left[\sum_{k\in Z^{r}(\overline s)} z_k+\sum_{\substack{km\in Y^{r}(\overline s):\\ k,m\notin Z(\overline s)^{r}}} y_{km} \right] \geq t_{ij}(\overline s)\, y_{ij} && ij\in E,\label{sp:1_a}\\
            & t_{ji}+\sum_{r\in{R}_{ji}(\overline s)} w^r\left[\sum_{k\in Z^{r}(\overline s)} z_k+\sum_{\substack{km\in Y^{r}(\overline s):\\ k,m\notin Z(\overline s)^{r}}} y_{km} \right] \geq t_{ji}(\overline s)\, y_{ij} && ij\in E.\label{sp:1_b}
			\end{align}
		\end{subequations}
	\end{propo}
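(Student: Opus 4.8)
The plan is to establish \eqref{sp:1_a} for every feasible $H$-median solution $(s,f)$ with $s=(z,y,x^1,x^2)\in\mathcal S$, $f\in\mathcal L(s)$, and $v_{act}(s)+v_{rout}(f)<v_{act}(\overline s)+v_{rout}(\overline f)$; the argument for \eqref{sp:1_b} is the same with the reverse interhub arc $(j,i)$ and the commodity set $R_{ji}(\overline s)$ in place of $(i,j)$ and $R_{ij}(\overline s)$. First I dispose of the case $y_{ij}=0$: then the right-hand side of \eqref{sp:1_a} is $0$ while its left-hand side is a sum of nonnegative terms, so the inequality holds. So assume $y_{ij}=1$. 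Constraints \eqref{rel_z-Y-1_1-0} then force $z_i=z_j=1$, so for every commodity $r$ the two-hub path $o^r-i-j-d^r$ is available in $G(s)$ at unit cost $C^r_{ij}$. Since $P^r(\overline s)=o^r-i-j-d^r$ for every $r\in R_{ij}(\overline s)$, we have $C^r(\overline s)=C^r_{ij}$ for these commodities and $t_{ij}(\overline s)=\sum_{r\in R_{ij}(\overline s)}w^r$.

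Writing $\sigma_r:=\sum_{k\in Z^r(\overline s)}z_k+\sum_{km\in Y^r(\overline s):\,k,m\notin Z^r(\overline s)}y_{km}$, which is a nonnegative integer, and substituting $t_{ij}(\overline s)\,y_{ij}=\sum_{r\in R_{ij}(\overline s)}w^r$, inequality \eqref{sp:1_a} is equivalent to $t_{ij}\ge\sum_{r\in R_{ij}(\overline s)}w^r(1-\sigma_r)$. The summands with $\sigma_r\ge 1$ are nonpositive, so it suffices to prove $t_{ij}\ge\sum_{r\in\widetilde R}w^r$, where $\widetilde R:=\{\,r\in R_{ij}(\overline s):\sigma_r=0\,\}$. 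The key step is to show that every $r\in\widetilde R$ is routed over the interhub arc $(i,j)$ in $f$; since the commodities of $\widetilde R$ are pairwise distinct, this gives $t_{ij}\ge\sum_{r\in\widetilde R}w^r$ and closes the proof.

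To carry out the key step, fix $r\in\widetilde R$. As routes use at most one interhub arc, every $r$-path available in $G(s)$ is either a one-hub path $o^r-k-d^r$ with $z_k=1$ (cost $C^r_{kk}$) or a two-hub path $o^r-k-m-d^r$ with $y_{km}=1$ and hence, by \eqref{rel_z-Y-1_1-0}--\eqref{rel_z-Y-1_2-0}, $z_k=z_m=1$ (cost $C^r_{km}$). I claim each such path costs at least $C^r_{ij}=C^r(\overline s)$. A one-hub path with $C^r_{kk}<C^r(\overline s)$ would have $k\in Z^r(\overline s)$ activated, forcing $\sigma_r\ge 1$; a two-hub path with $C^r_{km}<C^r(\overline s)$ would either have $\min\{C^r_{kk},C^r_{mm}\}<C^r(\overline s)$, so that $k$ or $m$ lies in $Z^r(\overline s)$ and is activated (recall $z_k=z_m=1$), forcing $\sigma_r\ge 1$, or have $\min\{C^r_{kk},C^r_{mm}\}\ge C^r(\overline s)$, whence $\min\{C^r_{km},C^r_{mk}\}\le C^r_{km}<\min\{C^r(\overline s),C^r_{kk},C^r_{mm}\}$ shows $km\in Y^r(\overline s)$ with $k,m\notin Z^r(\overline s)$, and this edge is activated ($y_{km}=1$), again forcing $\sigma_r\ge 1$. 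Each case contradicts $r\in\widetilde R$, so $C^r(s)\ge C^r_{ij}$; combined with the availability of $o^r-i-j-d^r$ at cost $C^r_{ij}$, we get $C^r(s)=C^r_{ij}$, and by the tie-breaking convention stated just before the proposition (one-hub routes preferred) the $s$-$r$-path carrying $w^r$ in $f$ is $o^r-i-j-d^r$.

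The step I expect to be the main obstacle is exactly this last one — controlling the $r$-paths of cost exactly $C^r_{ij}$ in $G(s)$. A commodity $r\in\widetilde R$ could a priori be carried over an equal-cost one-hub path $o^r-k-d^r$ with $C^r_{kk}=C^r_{ij}$ (such a $k$ is not in $Z^r(\overline s)$, by the strict inequality in its definition, and is not a hub of $\overline s$, since otherwise $P^r(\overline s)$ would itself be a one-hub path instead of using $(i,j)$), so this possibility must be ruled out either by invoking the fixed tie-breaking rule that defines $\mathcal L(s)$ or by noting that re-routing such commodities over $o^r-i-j-d^r$ gives another flow of $\mathcal L(s)$ with the same objective value, so that no strictly improving objective value is cut off; making this precise is the only non-routine point. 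Everything else — the case split on $y_{ij}$, the substitution $t_{ij}(\overline s)\,y_{ij}=\sum_{r\in R_{ij}(\overline s)}w^r$, and the enumeration of $r$-path types against the definitions of $Z^r(\overline s)$ and $Y^r(\overline s)$ and the $\mathcal S$-constraints \eqref{rel_z-Y-1_1-0}--\eqref{rel_z-Y-1_2-0} tying $y$ to $z$ — is bookkeeping.
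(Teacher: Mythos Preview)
Your approach matches the paper's: dispose of $y_{ij}=0$ trivially, and for $y_{ij}=1$ argue commodity-by-commodity that each $r\in R_{ij}(\overline s)$ either has an activated element of $Z^r(\overline s)$ or $Y^r(\overline s)$ compensating $w^r$ on the left-hand side, or else must still route through $(i,j)$ in any $R$-feasible flow for $s$. Your per-commodity accounting via $\sigma_r$ is in fact tidier than the paper's three-case split, which tacitly treats all $r\in R_{ij}(\overline s)$ uniformly.

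One small slip: the tie-breaking convention stated before the proposition prefers \emph{one-hub} paths, so when some activated $k$ has $C^r_{kk}=C^r_{ij}$ the convention works \emph{against} forcing the route $o^r\!-\!i\!-\!j\!-\!d^r$, not for it. Your fallback argument --- that re-routing such a commodity onto $(i,j)$ produces another $R$-feasible flow of equal cost, so no strictly improving objective value is excluded --- is the correct resolution, and it is also what the paper's proof implicitly relies on in its case~$(i)$ when it asserts $P^r(s)=P^r(\overline s)$.
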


    \begin{proof}
		The proof is given in Appendix \ref{appendix:proofs}.
	\end{proof}
	Broadly speaking, inequality \eqref{sp:1_a} states that, if  interhub edge $ij$ is activated in $G(\overline s)$, then, in any $R$-feasible flow for a solution network where $ij$ is also an activated hub edge,   the total flow through arc $(i, j)$ must be at least $t_{ij}(\overline s)$ unless, for some $r\in {R}_{ij}(\overline s)$, a hub is activated at some node of $Z^{r}(\overline s)$ or an interhub edge is activated for some edge of $Y^{r}(\overline s)$. A similar interpretation applies to \eqref{sp:1_b} for the flow circulating through $(j, i)$. The logic of the left hand side of these inequalities is to \textit{compensate} the demand of any commodity that is routed through $(i,j)$ (or through $(j, i)$) in  $G(\overline s)$, but would be rerouted if the solution network contained an $r$-path better than $P^r(\overline s)$. Note that, in the left-hand-side of \eqref{sp:1_a} (or  \eqref{sp:1_b}), the demand  $w^r$ of a given commodity $r\in R_{ij}(\overline s)$ is multiplied by $\sum_{k\in Z^{r}(\overline s)} z_k$, which can be greater than one. This means that, in some cases, the \textit{compensation} can be unnecessarily large. This can be avoided by subtracting the term $y_{km}$ for every pair $k\ne m$, when both $k, m\in Z^{r}(\overline s)$, $k\ne m$. In particular:
	\begin{propo}\label{propo:f}
		Let $\overline s\in {\mathcal S}$ be a given solution network and $f(\overline s)\in\mathcal L(\overline s)$ an $R$-feasible flow in $G(\overline s)$. Then, for all $ij\in E$, the following inequalities are valid for the $H$-median: 
		\begin{subequations}
			\begin{align}
				&  t_{ij}+\sum_{r\in{R}_{ij}(\overline s)}\!\!\!\! w^r\Bigg[\sum_{k\in Z^{r}(\overline s)}\!\!\! z_k+\sum_{\substack{km\in Y^{r}(\overline s):\\ k,m\notin Z^{r}(\overline s)}}\!\!\!\! y_{km} -\sum_{\substack{km\in E:\\ k,m\in Z^{r}(\overline s)}}\!\!\!\!y_{km}\Bigg]\geq t_{ij}(\overline s)\, y_{ij} &&\label{sp:2_a}\\
				&  t_{ji}+\sum_{r\in{R}_{ji}(\overline s)}\!\!\!\! w^r\Bigg[\sum_{k\in Z^{r}(\overline s)}\!\!\! z_k+\sum_{\substack{km\in Y^{r}(\overline s):\\ k,m\notin Z^{r}(\overline s)}}\!\!\!\! y_{km}  -\sum_{\substack{km\in E:\\ k,m\in Z^{r}(\overline s)}}\!\!\!\!y_{km}\Bigg]\geq t_{ji}(\overline s)\, y_{ij}.&& \label{sp:2_b}
			\end{align}
		\end{subequations}
 	\end{propo}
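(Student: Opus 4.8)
The plan is to prove Proposition~\ref{propo:f} following the same scheme as Proposition~\ref{propo0}, working edge by edge and then commodity by commodity, and then to control the one genuinely new ingredient: the subtracted term $\sum_{km\in E:\,k,m\in Z^r(\overline s)}y_{km}$, which makes \eqref{sp:2_a}--\eqref{sp:2_b} strictly stronger than \eqref{sp:1_a}--\eqref{sp:1_b}. Fix $ij\in E$ and write $\tilde B^r$ for the bracket appearing in \eqref{sp:2_a}. If $R_{ij}(\overline s)=\emptyset$ then $t_{ij}(\overline s)=0$ and both inequalities reduce to $t_{ij},t_{ji}\ge 0$; so assume $R_{ij}(\overline s)\ne\emptyset$, which forces $\overline y_{ij}=1$. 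Since we are in the at-most-one-interhub-arc case and $P^r(\overline s)$ uses arc $(i,j)$, every $r\in R_{ij}(\overline s)$ has $P^r(\overline s)\equiv o^r-i-j-d^r$, hence $C^r(\overline s)=C^r_{ij}$; as $i,j$ are activated hubs in $G(\overline s)$, the one-hub paths $o^r-i-d^r$ and $o^r-j-d^r$ are available there, so $C^r_{ii},C^r_{jj}\ge C^r(\overline s)$ and therefore $i,j\notin Z^r(\overline s)$; finally $t_{ij}(\overline s)=\sum_{r\in R_{ij}(\overline s)}w^r$. These are the same facts that open the proof of Proposition~\ref{propo0}.

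Next I would split on the value of $y_{ij}$. When $y_{ij}=1$ the inequality to be shown is $t_{ij}+\sum_{r\in R_{ij}(\overline s)}w^r\tilde B^r\ge\sum_{r\in R_{ij}(\overline s)}w^r$. Fix an $R$-feasible flow $f\in\mathcal L(s)$ and, for each $r\in R_{ij}(\overline s)$, look at its shortest $s$-$r$-path $P^r(s)$ in $G(s)$: if $P^r(s)$ uses $(i,j)$, that commodity contributes $w^r$ to $t_{ij}$ and, since no one- or two-hub detour is cheaper, $\tilde B^r=0$; otherwise, $y_{ij}=1$ forces $z_i=z_j=1$ and hence makes $o^r-i-j-d^r$ available in $G(s)$, so by the tie-break convention fixed before Proposition~\ref{propo0} one has $C^r(s)<C^r_{ij}=C^r(\overline s)$, and $P^r(s)$ is either a one-hub path $o^r-k-d^r$ with $z_k=1$ and $C^r_{kk}<C^r(\overline s)$ — whence $k\in Z^r(\overline s)$ — or a two-hub path $o^r-k-m-d^r$ with $y_{km}=1$, in which case, exactly as in Proposition~\ref{propo0}, either some endpoint lies in $Z^r(\overline s)$ (and is activated, by \eqref{rel_z-Y-1_1-0}--\eqref{rel_z-Y-1_2-0}) or $km\in Y^r(\overline s)$ with $k,m\notin Z^r(\overline s)$. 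Thus, for every rerouted $r$ the non-subtracted part of $\tilde B^r$, namely $\sum_{k\in Z^r(\overline s)}z_k+\sum_{km\in Y^r(\overline s):\,k,m\notin Z^r(\overline s)}y_{km}$, is at least $1$, and summing over $r$ reduces the whole claim to showing $\tilde B^r\ge 1$ for every rerouted $r$.

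This last reduction is where I expect the main difficulty: one must check that the subtracted sum does not overshoot the surplus of the non-subtracted part. Writing $V_r=\{k\in Z^r(\overline s):z_k=1\}$ and $E_r=\{km\in E:k,m\in Z^r(\overline s),\ y_{km}=1\}$, constraints \eqref{rel_z-Y-1_1-0}--\eqref{rel_z-Y-1_2-0} give $E_r\subseteq\binom{V_r}{2}$, so $\tilde B^r\ge|V_r|-|E_r|$ and one needs $|V_r|-|E_r|\ge 1$ whenever $V_r\ne\emptyset$, i.e. the activated backbone restricted to $Z^r(\overline s)$ must be acyclic. Establishing this is the crux, and it is here that the restriction to solutions improving on $(\overline s,\overline f)$ — under which these inequalities are meant to be used, as explained at the beginning of this section — becomes essential: using the at-most-one-interhub-arc structure and the triangle inequality, a one-hub routing $o-k-d$ with $k\in Z^r(\overline s)$ is never improved by inserting an interhub arc between two nodes of $Z^r(\overline s)$, so any hub edge lying on a cycle of that restricted backbone can be argued to be deactivatable without increasing any routing cost or any hub setup cost and without violating the connectivity inequalities \eqref{connect-backbone-0}; iterating reduces to the acyclic case, for which $\tilde B^r\ge|V_r|-|E_r|\ge 1$. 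The case $y_{ij}=0$ is analogous: then $t_{ij}=t_{ji}=0$ by \eqref{f_Y-0} and the right-hand side is $0$, while the same argument gives $\tilde B^r\ge 0$ for all $r\in R_{ij}(\overline s)$, so the left-hand side is nonnegative. Finally, \eqref{sp:2_b} follows by repeating the whole argument with the two orientations of $ij$ interchanged.
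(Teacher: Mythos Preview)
The paper does not supply a standalone proof of Proposition~\ref{propo:f}; it presents \eqref{sp:2_a}--\eqref{sp:2_b} as an immediate tightening of \eqref{sp:1_a}--\eqref{sp:1_b}, justified only by the sentence preceding the proposition: since $\sum_{k\in Z^r(\overline s)} z_k$ can exceed $1$, one subtracts $y_{km}$ for each pair $k,m\in Z^r(\overline s)$ to reduce the overcount. Your proposal follows the case structure of the proof of Proposition~\ref{propo0} and then goes well beyond what the paper actually argues, correctly isolating the crux as the bound $\tilde B^r\ge 1$ for every rerouted commodity, equivalently $|V_r|-|E_r|\ge 1$ where $V_r,E_r$ are the activated hubs and hub-edges inside $Z^r(\overline s)$. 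In this sense you are more careful than the paper: the paper's one-sentence justification implicitly treats each subtracted $y_{km}$ in isolation (two activated $z$'s compensate one subtracted $y$), which is fine for $|V_r|\le 2$ but does not address the possibility $|V_r|\ge 3$ with a dense $E_r$.

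Your resolution of this crux, however, has a genuine gap. The claim that a hub edge on a cycle of the restricted backbone ``can be deactivated without increasing any routing cost'' is argued only from the standpoint of the single commodity $r$ under consideration: for that $r$, a one-hub path $o-k-d$ with $k\in V_r$ is indeed available and at least as good. But an edge $k_1k_2$ with $k_1,k_2\in Z^r(\overline s)$ may well lie on the shortest $s$-$r'$-path of some \emph{other} commodity $r'$ (for which $k_1,k_2$ need not belong to $Z^{r'}(\overline s)$), so deactivating it can strictly increase $v_{rout}(f)$ and destroy the ``improving'' status you are invoking. Even setting this aside, deactivation produces a \emph{different} design $s'$; establishing \eqref{sp:2_a} at $(s',f')$ does not establish it at the original $(s,f)$, which is what validity requires. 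The same issue reappears in your $y_{ij}=0$ case, where you need $\tilde B^r\ge 0$. A direct argument controlling $|V_r|-|E_r|$ at the given $(s,f)$---rather than at a modified representative---is still missing.
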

    
	We can derive inequalities  analogous to \eqref{sp:1_a}-\eqref{sp:1_b} for the values of the flows though access and distribution arcs of a given solution network $G(\overline s)$. Now, $h^1_{oi}(\overline s)=\sum_{r\in {R}^1_{oi}(\overline s)}{w^r}$ is the total flow circulating through access arc $(o, i)$ for the $R$-feasible flow $f(\overline s)\in\mathcal L(\overline s)$. If the solution network changed, then the $s$-$r$-path of any commodity $r\in {R}^1_{oi}(\overline s)$ would change not only when $i$ is deactivated as a hub or when a hub \textit{better} than $i$ is activated, but also when hub edge $ij$  used in $P^r(\overline s)$  is de-activated, even if the set of hubs remained unchanged. The following example illustrates this:
	
	\begin{example}\label{example2}
		Consider a four node graph with unit pairwise demands $w^r=1$, for all $r\in R$ and the symmetric unit cost matrix depicted in Figure \ref{ex:2}.(a) with $\alpha=0.2$. Consider also the solution network $\bar s$ with $\bar z_1=\bar z_2=1$, $\bar y_{12}=1$. As can be seen in  Figure \ref{ex:2}.(b), the flow through access arc $(3, 2)$ is $h^1_{32}(\bar s)=3$. However, if edge $12$ were deactivated, the flow through access arc   $(3, 2)$ will no longer be 3, even if node 2 remained activated as a hub. That is,  for the solution network $\bar s'$ with $\bar z'_2=1$, $\bar y'_{ij}=0$ for all $ij\in E$, the flow through $(3, 2)$ would be $h^1_{32}(\bar s')=w_{32}+w_{34}=2$. It is only when $\bar y_{12}=1$ that we can impose $h^1_{32}\ge 3$ unless a better alternative is activated for the involved commodities. In particular, as we will see next, the optimality inequality that must hold is:
		$$ h^1_{32} + w_{31}y_{31}+w_{32}y_{32}+w_{34}(z_{3}+z_{4}-y_{34}) \geq w_{31}y_{12}+ \left(w_{32}+w_{34}\right)z_2.$$
	\end{example}
    
	For taking into account the circumstance illustrated in the above example, we now partition the commodities $r\in {R}^1_{oi}(\overline s)$ according to the following two cases:
	
	\begin{itemize}
		\item ${R}^{1,1}_{oi}(\overline s)=\{r\in {R}^1_{oi}(\overline s):\, P^r(\overline s)=o-i-d\}$. That is, ${R}^{1,1}_{oi}(\overline s)$ contains the indices of the commodities with origin at node $o$,  whose $s$-$r$-path in $G(\overline s)$ uses $i$ as the only intermediate hub.
		\item ${R}^{1,2}_{oi}(\overline s)=\{r\in {R}^1_{oi}(\overline s): P^r(\overline s)=o-i-j^r(i)-d \}$.  That is, ${R}^{1,2}_{oi}(s)$ contains the indices of the commodities with origin at node $o$, whose $s$-$r$-path in $G(\overline s)$ uses two intermediate hubs, being $i$ the first one. The index of the second intermediate hub is denoted by $j^r(i)$. As we have shown in Example \ref{example2}, if $i\,j^r(i)$ is not activated as an interhub edge, then commodity $r$ could be routed through a different access arc, even if no element outside $G(\overline s)$ were activated.
	\end{itemize}
	\noindent Thus ${R}^1_{oi}(\overline s)=R^{1,1}_{oi}(\overline s)\cup {R}^{1,2}_{oi}(\overline s)$, and the total flow through $(o, i)$ will be  $\sum_{r\in {R}^{1,1}_{oi}(\overline s)}w^r z_{i}+\sum_{r\in {R}^{1,2}_{oi}(\overline s)}w^r y_{i,j^r(i)}$, unless, for some $r\in {R}^1_{ij}(\overline s)$, a hub is activated at some node of $Z^{r}(\overline s)$ or an interhub edge is activated for some edge of $Y^{r}(\overline s)$, as  stated in the following result:
	
	\begin{propo}\label{propo:access}
		Let $\overline s\in {\mathcal S}$ be a given solution network and $f(\overline s)\in\mathcal L(\overline s)$ an $R$-feasible flow in $G(\overline s)$. Then, for all $(o,i)\in A$, the following inequalities are valid for the $H$-median:
        \begin{align}
             h^1_{oi}&+\sum_{r\in{R}^1_{oi}(\overline s)}\!\!\!\! w^r\left[\sum_{k\in Z^{r}(\overline s)}\!\!\!\!z_k+\sum_{\substack{km\in Y^{r}(\overline s):\\ k\ne i,\, k,m\notin Z^{r}(\overline s)}}\!\!\!\!\!\!\!\!y_{km} \right] \notag\\
            & \geq \sum_{r\in {R}^{1,1}_{oi}(\overline s)}\!\!\!\!w^r z_{i}+\sum_{r\in {R}^{1,2}_{oi}(\overline s)}\!\!\!\!w^r y_{i,j^r(i)}, \label{sp:2_access_2-1}\\
            h^1_{oi}&+\sum_{r\in{R}^1_{oi}(\overline s)}\!\!\!\! w^r\Bigg[\sum_{k\in Z^{r}(\overline s)}\!\!\!\! z_k +\sum_{\substack{km\in Y^{r}(\overline s):\\ k\ne i,\, k,m\notin Z^{r}(\overline s)}} \!\!\!\!\!\!\!\!\!\!y_{km} -\sum_{\substack{km\in E:\\ k,m\in Z^{r}(\overline s)}}\!\!\!\!\!\!y_{km}\Bigg]\notag\\
            &\geq \sum_{r\in {R}^{1,1}_{oi}(\overline s)}\!\!\!\!\!w^r z_{i}+\sum_{r\in {R}^{1,2}_{oi}(\overline s)}\!\!\!\!w^r y_{i,j^r(i)}.&&\label{sp:2_access_2_2}
        \end{align}
	\end{propo}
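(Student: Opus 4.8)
The plan is to follow the blueprint of the proof of Proposition~\ref{propo0}, adapted to access arcs; the new wrinkle is that the flow on an access arc $(o,i)$ in a new network depends, for commodities whose reference path uses a second hub, not only on $z_i$ but on one specific hub edge. Fix the reference solution $(\overline s,f(\overline s))$ with $f(\overline s)\in\mathcal L(\overline s)$, and let $(s,f)$ be any feasible $H$-median solution, $s=(z,y,x^1,x^2)\in\mathcal S$, $f=(t,h^1,h^2)\in\mathcal L(s)$. Since $f$ is $R$-feasible, $f=\sum_{r\in R}f^r$ with $f^r$ carrying $w^r$ units along the shortest $r$-path $P^r(s)$ in $G(s)$, so $h^1_{oi}\ge\sum_{r\in R^1_{oi}(\overline s)}w^r\,\mathbb{1}[(o,i)\in P^r(s)]$. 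Arguing commodity by commodity, it then suffices to show that for each $r\in R^1_{oi}(\overline s)$ with $w^r>0$,
\begin{equation*}
\mathbb{1}[(o,i)\in P^r(s)]+\sum_{k\in Z^r(\overline s)}z_k+\sum_{\substack{km\in Y^r(\overline s):\\ k\ne i,\ k,m\notin Z^r(\overline s)}}y_{km}\ \ge\ \begin{cases} z_i,&r\in R^{1,1}_{oi}(\overline s),\\ y_{i,j^r(i)},&r\in R^{1,2}_{oi}(\overline s).\end{cases}
\end{equation*}

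If the right-hand side is $0$ there is nothing to prove, so assume it equals $1$ and, moreover, that every term of the two correction sums vanishes (otherwise the left-hand side already reaches $1$). Under this assumption I would show $(o,i)\in P^r(s)$. The crucial observation is that any $r$-path in $G(s)$ starting with the leg $o-i$ uses the access arc $(o,i)$, so it is enough to exclude that $P^r(s)$ begins otherwise. In the one-interhub setting $P^r(s)$ is either $o-k-d$ with $z_k=1$, or $o-k-m-d$ with $z_k=z_m=1$ and $km$ an activated hub edge, and I would combine three facts: (a) shortest-path optimality of $P^r(s)$ makes it no costlier than the competing path through $i$ --- namely $o-i-d$ of cost $C^r_{ii}=C^r(\overline s)$ when $r\in R^{1,1}_{oi}(\overline s)$, and $o-i-j^r(i)-d$ of cost $C^r(\overline s)$ when $r\in R^{1,2}_{oi}(\overline s)$, whose availability is exactly what $z_i=1$, resp.\ $y_{i,j^r(i)}=1$, provides; (b) $z_{k'}=0$ for all $k'\in Z^r(\overline s)$ forces $C^r_{kk},C^r_{mm}\ge C^r(\overline s)$ for the hubs actually used; and (c) the vanishing of the restricted $y$-sum forces $C^r_{km}\ge\min\{C^r(\overline s),C^r_{kk},C^r_{mm}\}$ for any hub edge $km$ not shielded by $i$. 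Running through the cases, $P^r(s)$ must either be a path through $i$ (hence use $(o,i)$) or exhibit an activated hub in $Z^r(\overline s)$ or an activated hub edge in the restricted $Y^r(\overline s)$, contradicting the standing assumption. This yields \eqref{sp:2_access_2-1}.

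For the sharpened inequality \eqref{sp:2_access_2_2} I would repeat verbatim the reasoning that takes Proposition~\ref{propo0} to Proposition~\ref{propo:f}: whenever two distinct nodes $k,m\in Z^r(\overline s)$ are joined by an activated hub edge, both are activated hubs, so $\sum_{k\in Z^r(\overline s)}z_k$ over-counts; subtracting $y_{km}$ over all such edges cancels the surplus yet still leaves at least one unit for each activated node of $Z^r(\overline s)$, so the right-hand side stays dominated and validity is preserved.

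The hard part will be the bookkeeping around the node $i$ and around ties in routing costs. A path $o-m-i-d$ uses access arc $(o,m)\ne(o,i)$ yet reaches $i$ as a \emph{second} hub, and one must verify it is still accounted for: through its other endpoint $m$, which by the standing assumptions lies outside $Z^r(\overline s)$, the edge $mi$ enters the correction sum, so the restriction ``$k\ne i$'' never discards the contributions that matter. As Example~\ref{example2} illustrates, the role of the split $R^1_{oi}(\overline s)=R^{1,1}_{oi}(\overline s)\cup R^{1,2}_{oi}(\overline s)$ is precisely to move, for the $R^{1,2}$ commodities, the right-hand-side term from $z_i$ to $y_{i,j^r(i)}$, reflecting that deactivating $i\,j^r(i)$ reroutes such a commodity even if $i$ remains a hub. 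Making these distinctions airtight, together with fixing a shortest-path tie-breaking rule that is consistent between $G(\overline s)$ and $G(s)$ (the convention adopted just before Proposition~\ref{propo0}), is where the care is required; the cost comparisons themselves are elementary once the case distinctions are in place.
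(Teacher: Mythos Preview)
Your proposal is correct and follows essentially the same approach as the paper's proof: a case analysis showing that for each commodity $r\in R^1_{oi}(\overline s)$, either the reference route through $(o,i)$ remains optimal in $G(s)$ (so $h^1_{oi}$ picks up $w^r$), or else some correction term on the left-hand side is triggered. The paper organizes this as three global cases (no better alternative; some $z_k=1$ with $k\in Z^r(\overline s)$; some $y_{km}=1$ with $km$ in the restricted $Y^r(\overline s)$), whereas you reduce first to a per-commodity indicator inequality and then argue by contradiction --- but the content is the same, and your version is arguably cleaner. Your explicit discussion of the $k\ne i$ restriction and of tie-breaking goes beyond what the paper writes out; the paper's proof is brief and simply refers back to the argument for Proposition~\ref{propo0}.
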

    
    \begin{proof}
		The proof is given in Appendix \ref{appendix:proofs}.
	\end{proof}
	
	We finally derive valid inequalities for optimal flows through potential distribution arcs. The notation that we use now is the following:
	\begin{itemize}
		\item ${R}^{2,1}_{jd}(s)=\{r\in {R}^2_{jd}(\overline s): P^r(\overline s)=o-j-d\}$ contains the indices of the commodities with destination at node $d$ whose $s$-$r$-path in $G(\overline s)$ uses $j$ as the only intermediate hub.
		\item ${R}^{2,2}_{jd}(s)=\{r\in {R}^2_{jd}(\overline s): P^r(\overline s)=o-i^r(j)-j-d\}$ contains the indices of the commodities with destination at node $d$ whose $s$-$r$-path in $G(\overline s)$ uses two intermediate hubs, being $j$ the second one. The index of the first intermediate hub is denoted by $i^r(j)$. Similarly to the previous case, if $i^r(j)j$ is not an activated interhub edge, then commodity $r$ could be routed through a different distribution arc, even if no element outside $G(\overline s)$ were activated.
	\end{itemize}

	\begin{propo}\label{propo:distrib}
		Let $\overline s\in {\mathcal S}$ be a given solution network  and $f(\overline s)\in\mathcal L(\overline s)$ an $R$-feasible flow in $G(\overline s)$. Then,  for all $(j,d)\in A$, the following inequalities are valid for the $H$-median:
        
		\begin{align}
			h^2_{jd}&+\sum_{r\in{R}^2_{jd}(\overline s)}\!\!\!\! w^r\left[\sum_{k\in Z^{r}(\overline s)}\!\!\!\! z_k+\sum_{\substack{km\in Y^{r}(\overline s):\\ m\ne j,\, k,m\notin Z^{r}(\overline s)}}\!\!\!\!\!\!\!\!\!\! y_{km} \right]\notag\\
			&\geq \sum_{r\in {R}^{2,1}_{jd}(\overline s)}\!\!\!\!\!w^r z_{j}+\sum_{r\in {R}^{2,2}_{jd}(\overline s)}\!\!\!\!\!w^r y_{i^r(j),j} \label{sp:2_distrib_1}\\
			h^2_{jd}&+\sum_{r\in{R}^2_{jd}(\overline s)}\!\!\!\! w^r\Bigg[\sum_{k\in Z^{r}(\overline s)}\!\!\!\! z_k+ \sum_{\substack{km\in Y^{r}(\overline s):\\ m\ne j,\, k,m\notin Z^{r}(\overline s)}}\!\!\!\!\!\!\!\!\!\!\!\! y_{km}-\sum_{\substack{km\in E:\\ k,m\in Z^{r}(\overline s)}}\!\!\!\!\!\!y_{km} \Bigg]\notag\\
			&\geq \sum_{r\in {R}^{2,1}_{jd}(\overline s)}\!\!\!\!w^r z_{j}+\sum_{r\in {R}^{2,2}_{jd}(\overline s)}\!\!\!\!w^r y_{i^r(j),j} \label{sp:2_distrib_2}
		\end{align}
	\end{propo}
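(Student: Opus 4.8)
The plan is to adapt, essentially verbatim, the arguments behind Propositions~\ref{propo0} and~\ref{propo:access}, now with the distribution arc $(j,d)$ playing the role of the interhub/access arc. Fix an arbitrary GHLP solution $(s,f)$ with $s=(z,y,x^1,x^2)\in\mathcal S$ and $f=(t,h^1,h^2)\in\mathcal L(s)$, and recall that $h^2_{jd}(\overline s)=\sum_{r\in R^2_{jd}(\overline s)}w^r=\sum_{r\in R^{2,1}_{jd}(\overline s)}w^r+\sum_{r\in R^{2,2}_{jd}(\overline s)}w^r$, and that, since $\overline s\in\mathcal S$, $\overline z_j=1$ and $\overline z_{i^r(j)}=\overline y_{i^r(j),j}=1$ for every $r\in R^{2,2}_{jd}(\overline s)$. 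First I would dispose of the trivial case $z_j=0$: then $x^2_{jd}=0$ by~\eqref{rel_z-Y-1_2-0}, hence $h^2_{jd}=0$ by~\eqref{H_X2_2-0}, while $y_{i^r(j),j}\le z_j=0$ makes the second sum on each right-hand side vanish, so both~\eqref{sp:2_distrib_1} and~\eqref{sp:2_distrib_2} reduce to a nonnegative quantity being at least $0$; thus one may assume $z_j=1$.

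The core is a per-commodity dichotomy over $r\in R^2_{jd}(\overline s)$. Since $z_j=1$ — and, for $r\in R^{2,2}_{jd}(\overline s)$ with $y_{i^r(j),j}=1$, also $z_{i^r(j)}=1$ — the path $P^r(\overline s)$ survives in $G(s)$ with unchanged unit cost $C^r(\overline s)$, so the $s$-$r$-path $P^r(s)$ has cost at most $C^r(\overline s)$. Then either $P^r(s)$ still routes $w^r$ through $(j,d)$, in which case that commodity contributes $w^r$ to $h^2_{jd}$, matching its share $w^rz_j$ (if $r\in R^{2,1}_{jd}(\overline s)$) or $w^ry_{i^r(j),j}\le w^r$ (if $r\in R^{2,2}_{jd}(\overline s)$) on the right-hand side; or it does not, and I would prove that then
\[
\sum_{k\in Z^{r}(\overline s)}z_k+\!\!\sum_{\substack{km\in Y^{r}(\overline s):\\ m\ne j,\ k,m\notin Z^{r}(\overline s)}}\!\! y_{km}\ \ge\ 1.
\]
Multiplying this bound by $w^r$ and adding over all $r\in R^2_{jd}(\overline s)$ — commodities that stay contribute a nonnegative amount through the bracket, and those in $R^{2,2}_{jd}(\overline s)$ with $y_{i^r(j),j}=0$ contribute $0$ to the right-hand side — yields~\eqref{sp:2_distrib_1}. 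Inequality~\eqref{sp:2_distrib_2} follows by the same accounting after noting that an activated edge $km$ with both endpoints in $Z^{r}(\overline s)$ forces $z_k=z_m=1$ and is therefore already over-counted by the $z$-sum; subtracting $\sum_{km\in E:\,k,m\in Z^{r}(\overline s)}y_{km}$ cancels exactly this surplus while keeping the bracket at least $1$ in the rerouting case — the refinement introduced in Proposition~\ref{propo:f}.

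It remains to establish the bracket bound by examining the form of $P^r(s)$ when the commodity leaves $(j,d)$. If $P^r(s)$ is strictly cheaper than $C^r(\overline s)$, it is either $o^r-k-d^r$ with $C^r_{kk}<C^r(\overline s)$, so $k\in Z^{r}(\overline s)$ and $z_k=1$, or $o^r-k-m-d^r$ with, after applying the tie-breaking convention ($C^r_{km}=C^r_{kk}$ resolved in favour of the one-hub path), $C^r_{km}<\min\{C^r(\overline s),C^r_{kk},C^r_{mm}\}$, i.e. $km\in Y^{r}(\overline s)$ and $y_{km}=1$; here $m\ne j$, for otherwise $d^r$ would be reached through $(j,d)$ and the commodity would not have left, and if $k$ or $m$ lies in $Z^{r}(\overline s)$ then $y_{km}=1\Rightarrow z_k=z_m=1$ and the $z$-sum already captures it. The delicate situation — the one I expect to be the main obstacle — is when $P^r(s)$ costs exactly $C^r(\overline s)$ yet differs from $P^r(\overline s)$; this occurs precisely when the destination $d^r$, a non-hub in $\overline s$, is activated as a hub in $s$ (so $(j,d)$ is no longer a distribution arc), or when $r$ is absorbed by an equal-cost alternative route. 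Closing this case cleanly requires combining the triangle inequality on $c$ with $\gamma,\theta>\alpha$ to certify that the newly activated $d^r$ — or the alternative hub, resp. edge — already lies in $Z^{r}(\overline s)$, resp. $Y^{r}(\overline s)$, and it is also where one must verify that the condition $m\ne j$ in the $Y^{r}$-sum does not discard a term that is genuinely needed. Everything else is the same bookkeeping already carried out for interhub and access arcs in Appendix~\ref{appendix:proofs}.
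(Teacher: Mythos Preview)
Your approach is essentially the paper's: the paper merely states that the proof is similar to that of Proposition~\ref{propo:access} and omits all details, and your per-commodity dichotomy with the roles of access and distribution arcs interchanged is exactly that adaptation. One small caveat: in disposing of the case $z_j=0$, you assert that the left-hand side of~\eqref{sp:2_distrib_2} is ``a nonnegative quantity'', but the subtracted term $\sum_{km\in E:\,k,m\in Z^{r}(\overline s)}y_{km}$ can exceed $\sum_{k\in Z^{r}(\overline s)}z_k$ (take four activated hubs in $Z^{r}(\overline s)$ with all six backbone edges among them activated, giving bracket $4-6=-2$), so nonnegativity of the left-hand side is not automatic there --- though the paper's own treatment of the reinforced inequalities is equally silent on this point.
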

	\noindent The proof is similar to that of Proposition \ref{propo:access}. Details are omitted.
	
	\begin{teorema}\label{teo}
		A valid formulation for the 
        $H$-median is the following:
        \begin{subequations}
			\begin{align}
				&  (H-\text{med}) 
                \quad \min\, v_{act}(s)\,+v_{rout}(f) &&\notag\\
				&   t_{ij}+\sum_{r\in{R}_{ij}(\overline s)}\!\!\!\! w^r\Bigg[\sum_{k\in Z^{r}(\overline s)}\!\!\! z_k+\sum_{\substack{km\in Y^{r}(\overline s):\\ k,m\notin Z^{r}(\overline s)}}\!\!\!\! y_{km} -\sum_{\substack{km\in E:\\ k,m\in Z^{r}(\overline s)}}\!\!\!\!y_{km}\Bigg]\geq t_{ij}(\overline s)\, y_{ij} &&  \overline s\in {\mathcal S},\, ij\in E\label{eq1_1}\\
				&   t_{ji}+\sum_{r\in{R}_{ji}(\overline s)}\!\!\!\! w^r\Bigg[\sum_{k\in Z^{r}(\overline s)}\!\!\! z_k+\sum_{\substack{km\in Y^{r}(\overline s):\\ k,m\notin Z^{r}(\overline s)}}\!\!\!\! y_{km}  -\sum_{\substack{km\in E:\\ k,m\in Z^{r}(\overline s)}}\!\!\!\!y_{km}\Bigg]\geq t_{ji}(\overline s)\, y_{ij} && \overline s\in {\mathcal S},\, ij\in E\label{eq1_2}\\
				& h^1_{oi}+\sum_{r\in{R}^1_{oi}(\overline s)}\!\!\!\! w^r\Bigg[\sum_{k\in Z^{r}(\overline s)}\!\!\!\! z_k +\sum_{\substack{km\in Y^{r}(\overline s):\\ k\ne i,\, k,m\notin Z^{r}(\overline s)}} \!\!\!\!\!\!\!\!\!\!y_{km} -\sum_{\substack{km\in E:\\ k,m\in Z^{r}(\overline s)}}\!\!\!\!\!\!y_{km}\Bigg]\notag\\
				& \hspace{2.5cm}\geq \sum_{r\in {R}^{1,1}_{oi}(\overline s)}\!\!\!\!\!w^r z_{i}+\sum_{r\in {R}^{1,2}_{oi}(\overline s)}\!\!\!\!w^r y_{i,j^r(i)} &&  \overline s\in {\mathcal S},\, (o,i)\in A\label{eq2}\\
				& h^2_{jd}+\sum_{r\in{R}^2_{jd}(\overline s)}\!\!\!\! w^r\Bigg[\sum_{k\in Z^{r}(\overline s)}\!\!\!\! z_k+ \sum_{\substack{km\in Y^{r}(\overline s):\\ m\ne j,\, k,m\notin Z^{r}(\overline s)}}\!\!\!\!\!\!\!\!\!\!\!\! y_{km}-\sum_{\substack{km\in E:\\ k,m\in Z^{r}(\overline s)}}\!\!\!\!\!\!y_{km} \Bigg]\notag\\
                & \hspace{2.5cm}\geq \sum_{r\in {R}^{2,1}_{jd}(\overline s)}\!\!\!\!w^r z_{j}+\sum_{r\in {R}^{2,2}_{jd}(\overline s)}\!\!\!\!w^r y_{i^r(j),j}  &&  \overline s\in {\mathcal S},\, (j,d)\in A\label{eq3}\\
				& s=(z, y, x^1, x^2)\in {\mathcal S};\qquad f=(t, h^1,h^2)\in\mathcal F(s)\notag
			\end{align}
		\end{subequations}
	\end{teorema}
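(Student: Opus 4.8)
\emph{Plan.} The goal is to prove that the optimal value of $(H-\text{med})$ coincides with that of the $H$-median and that an optimal solution of either one induces an optimal solution of the other. I would establish this by two opposite inequalities between the optimal values.

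\emph{Every $H$-median solution is feasible for $(H-\text{med})$, with the same cost.} Let $(s^\star,f^\star)$ be an optimal $H$-median solution; by Remark~\ref{Remark1} one may assume $f^\star$ is $R$-feasible, so $s^\star\in\mathcal S$ and $f^\star\in\Omega(s^\star)\subseteq\mathcal F(s^\star)$. (The only constraint of $\mathcal F(s^\star)$ that is not immediate for an $R$-feasible flow is the aggregated demand constraint \eqref{Demand-0}, and it holds because every commodity with origin in $S$ and destination in $S^c$ uses at least one arc of $\delta^+(S)$, contributing at least its demand to the left-hand side.) Since $(s^\star,f^\star)$ is a feasible $H$-median solution, Propositions~\ref{propo:f}, \ref{propo:access} and \ref{propo:distrib} — which are exactly the statements that \eqref{eq1_1}--\eqref{eq3} are valid for the $H$-median, for every reference network $\bar s\in\mathcal S$ — guarantee that it satisfies all of \eqref{eq1_1}--\eqref{eq3}. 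Hence $(s^\star,f^\star)$ is feasible for $(H-\text{med})$ with objective value $v_{act}(s^\star)+v_{rout}(f^\star)$, so the optimal value of $(H-\text{med})$ is at most that of the $H$-median.

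\emph{Every feasible solution of $(H-\text{med})$ dominates an $H$-median solution.} Let $(\hat s,\hat f)$ be feasible for $(H-\text{med})$. Route every commodity $r$ along a shortest $r$-path $P^r(\hat s)$ in $G(\hat s)$; this yields the $R$-feasible flow $f(\hat s)\in\mathcal L(\hat s)$ and a feasible $H$-median solution $(\hat s,f(\hat s))$. The crucial claim is $\hat f\ge f(\hat s)$ componentwise, which I would obtain by \emph{instantiating the optimality inequalities of $(H-\text{med})$ at the reference network $\bar s:=\hat s$ and evaluating them at the point $(\hat s,\hat f)$ itself}. Because $P^r(\hat s)$ is a shortest $r$-path in $G(\hat s)$, no activated hub of $\hat s$ belongs to $Z^r(\hat s)$ and no activated hub edge belongs to $Y^r(\hat s)$ (as noted just before Proposition~\ref{propo0}), so the sums $\sum_{k\in Z^r(\hat s)}\hat z_k$ and $\sum_{km\in Y^r(\hat s)}\hat y_{km}$ inside the brackets vanish; moreover, if $k,m\in Z^r(\hat s)$ then neither is an activated hub, whence \eqref{rel_z-Y-1_1-0} forces $\hat y_{km}=0$ and the subtracted correction term vanishes as well. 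Consequently \eqref{eq1_1}--\eqref{eq1_2} reduce to $\hat t_{ij}\ge t_{ij}(\hat s)\,\hat y_{ij}$ and $\hat t_{ji}\ge t_{ji}(\hat s)\,\hat y_{ij}$, which together with $t_{ij}(\hat s)=0$ whenever $ij$ is not an activated hub edge gives $\hat t\ge t(\hat s)$. In the same way the right-hand side of \eqref{eq2}, evaluated at $(\hat s,\hat f)$, equals $\sum_{r\in R^{1,1}_{oi}(\hat s)}w^r+\sum_{r\in R^{1,2}_{oi}(\hat s)}w^r=h^1_{oi}(\hat s)$, since $\hat z_i=1$ for $r\in R^{1,1}_{oi}(\hat s)$ ($i$ being an intermediate hub of $P^r(\hat s)$) and $\hat y_{i,j^r(i)}=1$ for $r\in R^{1,2}_{oi}(\hat s)$ (edge $i\,j^r(i)$ being activated); hence $\hat h^1_{oi}\ge h^1_{oi}(\hat s)$, and \eqref{eq3} likewise yields $\hat h^2_{jd}\ge h^2_{jd}(\hat s)$. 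Since $v_{rout}(\cdot)$ is a nonnegative linear combination of the flow variables, $\hat f\ge f(\hat s)$ implies $v_{rout}(\hat f)\ge v_{rout}(f(\hat s))$, so $v_{act}(\hat s)+v_{rout}(f(\hat s))\le v_{act}(\hat s)+v_{rout}(\hat f)$. Applying this to an optimal $(\hat s,\hat f)$ of $(H-\text{med})$ shows that the optimal value of the $H$-median is at most that of $(H-\text{med})$, and that the constructed $(\hat s,f(\hat s))$ is then $H$-median-optimal. Combined with the previous step, the two optimal values coincide and $(H-\text{med})$ is a valid formulation.

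\emph{Main obstacle.} The heart of the argument is the collapse step in the second part: checking carefully that, once \eqref{eq1_1}--\eqref{eq3} are written with $\bar s=\hat s$ and evaluated at $(\hat s,\hat f)$, every correction term — the $z_k$-sum, the $y_{km}$-sum with its $k\ne i$, resp.\ $m\ne j$, restriction in \eqref{eq2}--\eqref{eq3}, and the subtracted $y_{km}$ term — drops out, leaving exactly the componentwise domination $\hat f\ge f(\hat s)$. This rests on the optimality of the paths $P^r(\hat s)$ in $G(\hat s)$ and on the linking constraints of $\mathcal S$ (Section~\ref{Domain:S}) forcing hub edges to have hub endpoints. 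A smaller point to check is that the rounded pair $(\hat s,f(\hat s))$ is a genuine $H$-median solution, i.e.\ that $R$-feasible flows satisfy every constraint defining $\mathcal F(\hat s)$ — the cut-crossing observation of the first step handles \eqref{Demand-0}, and the remaining constraints are routine.
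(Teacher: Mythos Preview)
Your argument is correct and follows essentially the same route as the paper's proof: validity of \eqref{eq1_1}--\eqref{eq3} handles one direction, and for the other you instantiate these inequalities at the reference network $\bar s=\hat s$, observe that all bracketed correction terms vanish (no activated hub lies in $Z^r(\hat s)$, no activated hub edge lies in $Y^r(\hat s)$, and the subtracted $y_{km}$ terms are forced to zero by the linking constraints of $\mathcal S$), and conclude $\hat f\ge f(\hat s)$ componentwise. The paper packages this same collapse step through the auxiliary problem $Aux_{(\bar s,\bar f)}$ and the feasibility conditions \eqref{feas1}--\eqref{feas3}, arriving at a contradiction when some component of $\bar f$ falls below $f(\bar s)$; your direct cost-domination argument ($v_{rout}(\hat f)\ge v_{rout}(f(\hat s))$, hence equality of optimal values) is a slightly cleaner way to reach the same conclusion and avoids the need to argue that $\hat f$ itself is $R$-feasible.
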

	
	\begin{proof}
		The proof is given in Appendix \ref{appendix:proofs}.
	\end{proof}

\section{Branch-and-solve for GHLP}\label{sec:B&S}
	In this section we detail the B\&S algorithm designed to optimally solve the GHLP. As explained, we model the GHLP in a mixture of two parts: a \textit{basic} MIP and a \textit{delayed} CLP. The \textit{basic} MIP is the relaxation of GHLP defined by the $R-GHLP$:
	$$	(R-GHLP) \qquad \min\,_{f \in\mathcal F(s), s\in \mathcal S\,} \quad\{ v_{act}(s)\,+v_{rout}(f) \}$$
	\noindent where $\mathcal S$ is the domain  of design solutions, described in Section  \ref{Domain:S}, and $\mathcal F(s)$ is the domain of reinforced aggregated flows for solution $s$, presented in Section \ref{Domain:F}. The \textit{delayed} part for a feasible design solution $s\in\mathcal S$ is denoted by $\mathcal L(s)$, and contains $R$-feasible flows for $s$ (see Section \ref{R-feas0}).
	
	The B\&S  algorithm explores an enumeration tree, branching on the design  variables $s$. At each node, it solves the LP relaxation of R-GHLP. The R-GHLP formulation is dynamically extended in a B\&C fashion by incorporating violated aggregated demand constraints \eqref{Demand-0}. Since the size of this family of constraints is exponential in the number of nodes of the input graph, they are separated for fractional solutions at the root node and as lazy constraints at the nodes of the enumeration tree where the LP relaxation of R-GHLP produces a solution $(\bar s, \bar f)$, with $\bar s$ binary. In addition, at the root node, the LP relaxation of R-GHLP is further reinforced by separating the connectivity constraints \eqref{connect-backbone-0}, also of exponential size in the number of nodes of the input graph.
	Initially, only constraints of both types associated with singletons  are included in the formulation. Each singleton $S=\{i\}$, with $i\in V$, defines one  constraint \eqref{Demand-0} and $|V|-1$ constraints \eqref{connect-backbone-0}, one for each $j\in V\setminus \{i\}$. The separation procedures for these two  constraint families are explained in Sections \ref{sec:separa} and \ref{sec:separa-connect}, respectively.
	
	Furthermore, at selected nodes of the enumeration tree, the GHLP is solved exactly, by integrating an \textit{ad-hoc} algorithm for the \textit{delayed} part (see Section \ref{R-feas}).  Since $\mathcal F(s)\subseteq \mathcal L(s)$, at the nodes where the LP relaxation of R-GHLP produces a solution $(\bar s, \bar f)$, with $\bar s$  binary, it is not guaranteed that the flow $\bar f$ is $R$-feasible. Hence, in order to guarantee a proper partition of the  GHLP solution space, throughout the tree exploration, at such nodes we explicitly compute an $R$-feasible flow $f(\bar s)\in\mathcal L(\bar s)$. This is done by finding, for each commodity $r\in R$, an $s$-$r$-path in $G(\bar s)$,  $P^r(\bar s)$, using a shortest path algorithm (see Section \ref{shortest}). This ensures that the flow $f(\bar s)$ dictated by the $s$-$r$-paths $P^r(\bar s)$, $r\in R$, is $R$-feasible for $\bar s$. Therefore,  $(\bar s, f(\bar s))$ is an optimal GHLP solution for the current node, which needs not be further explored.  If this solution improves the current incumbent, it is updated accordingly. Then, a nogood cut is added to force the solver eliminate the current node.
	
	A pseudo-code of the solution method is outlined in Algorithm \ref{algo:sol}. This basic version of the B\&S solution algorithm will be referred to as BS. It will benchmarked against published results from the literature and against two other B\&S variants: one in which feasibility Benders cuts \eqref{Feas:D2} are separated (referred to as BSF), and another one in which optimality cuts \eqref{eq1_1}-\eqref{eq3} are also separated (referred to as BSO). In both cases, the respective separation procedures will be applied at nodes with binary design solutions $\overline s$  and the identified violated inequalities incorporated to the R-GHLP formulation. Details on the separation of feasibility and optimality cuts are provided in Sections \ref{sec:separa-Benders} and \ref{sec:separa-opt}, respectively.
	\begin{algorithm}
        \small
		\caption{Exploration of tree node $\ell$}\label{algo:sol}
        \hspace*{\algorithmicindent} \textbf{Input:} Incumbent solution $(s^*, f^*)$; Incumbent value $v_{act}(s^*)\,+v_{rout}(f^*)$\vspace{1.pt}
		{\setlength{\baselineskip}{10pt}
			\begin{algorithmic}[1]
				\If{($\ell$ is the root node)}\vspace{1.pt}
				\State $repeat\gets true$\vspace{1.pt}
				\While{(repeat)}\vspace{1.pt}
				\State $repeat\gets false$\vspace{1.pt}
				\State $(\bar s, \bar f)\gets \text{Optimal solution to LP relaxation of } R-GHLP^{\ell}$\vspace{1.pt}
				\State Apply procedure of Section \ref{sec:separa-connect} to find a connectivity constraint \eqref{connect-backbone-0} violated by $\bar s$\vspace{1.pt}
				\State Apply procedure of \ref{sec:separa} to find an aggregated demand constraint \eqref{Demand-0} violated by $\bar f$\vspace{1.pt}
				\If{(violated connectivity constraint \eqref{connect-backbone-0} or violated aggregated demand constraint \eqref{Demand-0} found)}\vspace{1.pt}
				\State $repeat\gets true$
				\State Incorporate identified viol\vspace{1.pt}ated constraint \eqref{connect-backbone-0}\vspace{1.pt}
				\State Incorporate identified violated constraint \eqref{Demand-0}\vspace{1.pt}
				\EndIf
				\EndWhile
				\ElsIf{($\bar s$ is integer)}
				\State Apply procedure of \ref{sec:separa} to find an aggregated demand constraint \eqref{Demand-0} violated by $\bar f$\vspace{1.pt}
				\If{(violated aggregated demand constraint \eqref{Demand-0} found)}\vspace{1.pt}
				\State Incorporate identified violated constraint \eqref{Demand-0}\vspace{1.pt}
				\EndIf
				\State Apply Algorithm \ref{R-feas:alg} to find an $R$-feasible flow $f(\bar s)\in \mathcal L(\bar s)$\vspace{1.pt}
				\If{($v_{act}(\bar s)\,+v_{rout}(\bar f)<v^*$)}\vspace{1.pt}
				\State $(s^*, f^*)\gets (\bar s, f(\bar s))$\vspace{1.pt}
				\State $v^*\gets v_{act}(\bar s)\,+v_{rout}(\bar f)$\vspace{1.pt}
				\EndIf
				\State Add nogood cut $\sum_{ij\in E: \bar y_{ij}=0}y_{ij}+\sum_{ij\in E: \bar y_{ij}=1}\left(1-y_{ij}\right)\geq 1$\vspace{1.pt}
				\EndIf
		\end{algorithmic}}
		\hspace*{\algorithmicindent} \textbf{Output:} Incumbent solution $(s^*, f^*)$; Incumbent value $v_{act}(s^*)\,+v_{rout}(f^*)$
	\end{algorithm}
    
	\subsection{Finding feasible $R$-flows $f(\bar s)\in \mathcal L(\bar s)$: $s$-$r$-paths in $G(\bar s)$}\label{R-feas}\label{shortest}
		For finding $s$-$r$-paths in the solution network $G(\bar s)$ associated with a given solution $\bar s\in\mathcal S$, we simply solve an all-pairs shortest path problem on $G(\bar s)$ relative to the routing costs for the different types of arcs induced by $\bar s$, considering each node as the source node once. For this, Dijkstra's algorithm \citep{Dijkstra} is applied for each source node. Then, an $R$-feasible flow $f(\bar s)\in \mathcal L(\bar s)$ is obtained by computing for each arc of $G(\bar s)$ the sum of the demands of all the commodities whose $s$-$r$-path traverses that arc (see Algorithm \ref{R-feas:alg} in the Appendix \ref{shortest:app}).
	
		\subsection{Separation of inequalities}
			Below we describe the procedures we apply for the separation of the different families of inequalities that we consider in our solution algorithms.	
			\subsubsection{Separation of aggregated demand constraints \eqref{Demand-0}}\label{sec:separa}
				The constraint \eqref{Demand-0} associated with a given node set $S\subset V$ can be rewritten as $(t+h^1+h^2)(\delta^+(S))-W(\delta^+(S))\geq 0$, i.e.  $Q(\delta^+(S))\geq 0$ where, $Q_{ij}=({t}_{ij}+h^1_{ij}+h^2_{ij})-{w}_{ij}$ for each $(i,\, j)\in A$. For a given flow $\overline f= (\overline t,\, \overline h^1,\, \overline h^2)$ satisfying Constraints \eqref{leaves_i_H-0}-\eqref{Flow_Balance-0}, \eqref{H_X2_1-0}-\eqref{domain_F-H-0-0}, the sign of the coefficients $\overline{Q}_{ij}=({\bar t}_{ij}+\bar h^1_{ij}+\bar h^2_{ij})-{w}_{ij}$, with $(i,\, j)\in A$, can be both positive and negative. The separation of constraints \eqref{Demand-0} is thus to find a node set $S\subset V$ such that $\overline Q(\delta^+(S)) < 0$ or to prove that such a set does not exist.
				
				\paragraph{Exact separation} An exact solution to the above separation problem can be obtained by identifying a di-cut $S\subset V$ of minimum value, relative to the capacities vector $\overline{Q}$ (alternatively, of maximum value, relative to the capacities $-\overline{Q}$). Unlike the min-cut problem with non-negative capacities, which can be solved in polynomial time, min-cut (and max-cut) on a graph with arbitrary capacities is NP-hard. While formulations for the undirected variant of max-cut have been studied \citep[see, e.g. ][]{BGJR88,JuengerMallach}, we are not aware of any one for the directed variant, which is the one that arises here. A formulation is given in \cite{Ageev2001} that produces a bi-partition of the node set of a directed network for a special case of max-cut, which however does not ensure that the bi-partition is of maximum capacity, as it does not guarantee that all the arcs in the cutset are activated. In the EC we provide a formulation, based on that of \cite{Ageev2001}, which produces a cutset of minimum value relative to arc capacities $\overline Q$.
				
				\paragraph{Heuristic separation} While the above formulation allows to solve the separation problem exactly, it uses binary variables associated with both nodes 
                and arcs. Thus, solving it to optimality can be computationally rather time consuming, as was confirmed empirically in preliminary  testing. Therefore, we have considered two alternatives for the heuristic separation for inequalities \eqref{Demand-0}.
				\begin{itemize}
					\item The first one is to find a cutset of minimum capacity relative to $\widehat Q_{ij}=\max\{\overline Q_{ij},\, 0\}$, which can be obtained from the tree of min-cuts. Since $\widehat Q_{ij}\geq 0$ for all $(i, j)\in A$, such a tree can be found in polynomial time; for instance, with the algorithm of \cite{Gusfield}, which is is $\mathcal O (|V|^3)$.
					\item Repeatedly applying the above heuristic separation can be still too time consuming. A faster alternative is to establish a threshold value $\varepsilon$, and to identify the connected components in the subgraph $G_{\varepsilon}(\bar s)$, induced by the edges such that $\bar x^1_{ij} + \bar x^2_{ij}+\bar y_{ij}+\geq \varepsilon$, $ij\in E$. If there are several components, we check, for each of them, whether the corresponding  aggregated demand constraint \eqref{Demand-0} is violated by $\overline f$. Note that for binary design solutions, this alternative may  identify violated inequalities only if the solution network induces more than one connected component.
				\end{itemize}
				For the SA policy, adapting the separations above to the reinforced aggregated demand constraints \eqref{Ag:dem_reinf} is quite involved. For this reason,  only constraints \eqref{Demand-0} are separated also for the SA policy, although if a violated constraint is found, then, the inequality that is added is \eqref{Ag:dem_reinf}.

			\subsubsection{Separation of connectivity constraints \eqref{connect-backbone-0}}\label{sec:separa-connect}
                Constraints \eqref{connect-backbone-0} can be separated exactly by finding the tree of min-cuts in the subgraph induced by the edges such that $\bar y_{ij}>0$, using the values  $\bar y_{ij}$ as edge capacities. Again we apply the algorithm of \cite{Gusfield} to find such a tree.
                
                Alternatively, we may apply a heuristic separation, which is to consider the connected components in the graph $G(\bar y;\varepsilon)$, induced by the edges such that $\bar y_{ij}\geq \varepsilon$, and, if there are several components, to check, for each of them, whether the corresponding  constraint \eqref{connect-backbone-0} is violated by $\overline y$.
	
			\subsubsection{Separation of feasibility Benders cuts \eqref{Feas:D2}} \label{sec:separa-Benders}
				A feasibility Benders cut \eqref{Feas:D2} violated by a given solution $(\bar s,\, \bar f)$, with $\bar s\in \mathcal S$, $\bar f\in\mathcal F(\bar  s)$ can be found, if it exists, by solving the auxiliary problem  $D-{Aux}_{(\bar s, \bar f)}$. In case it is unbounded, the feasibility inequality \eqref{Feas:D2} associated with the extreme ray that allows to identify unboundedness is violated by $(\bar s, \,\bar f)$. In the BSF variant,  constraints \eqref{Feas:D2} are separated once for the final (fractional) solution of the root node, and at every node of the enumeration tree where the LP relaxation of R-GHLP produces a solution $(\bar s, \bar f)$, with $\bar s$  binary.
	
			\subsubsection{Separation of optimality inequalities \eqref{eq1_1}-\eqref{eq3}}\label{sec:separa-opt}
				For a given feasible solution $(\bar s, \,\bar f)$, with $\bar s\in\mathcal S$, $\bar f\in\mathcal L(\bar s)$, the separation of constraints \eqref{eq1_1}-\eqref{eq3} can be done by inspection, provided that the involved index sets are available.
				While the index sets $R_{ij}(\bar s)$, $R^1_{oi}(\bar s)$, and $R^2_{jd}(\bar s)$ can be identified by exploring each link of the graph $G(\bar s)$ (usually quite sparse) just once, computing the index sets $Z^r(\bar s)$ and $Y^r(\bar s)$ is significantly more time consuming as, essentially, it requires exploring the entire input graph for each commodity. For this reason, in the initialization of the solution algorithm, for each commodity $r\in R$, we enumerate all possible $r$-paths with at most one interhub arc and sort them in increasing order of their routing costs. This sorting produces two initial index lists for each commodity, $\bar Z^r$ and $\bar Y^r$, corresponding to $r$-paths with a single hub node and with two hub nodes,  respectively. Then, in the solution algorithm, for separating \eqref{eq1_1}-\eqref{eq3} for a given feasible solution $(\bar s, \,\bar f)$, the index sets $Z^r(\bar s)$ and $Y^r(\bar s)$ are determined, for all $r\in R$, by sequentially exploring  both lists, and identifying all  indices corresponding to routing costs smaller than $C^r(\bar s)$ and $\min\{C^r(\bar s), C^r_{kk}, C^r_{mm}\}$, respectively.
	
				Inequalities \eqref{eq1_1}-\eqref{eq3} are separated at every node of the enumeration tree where the LP relaxation of R-GHLP produces a solution $(\bar s, \bar f)$, with $\bar s$  binary.

\section{Computational experiments}\label{sec:compu}
    In this section we report results from the computational experiments we have run. All the experiments have been performed on a PC equipped with a Ryzen 7 5700G CPU and 32Gb of RAM. The formulations have been implemented in Python 3.11 and solved with Gurobi 10.0.1. To provide reproducible results, we set Gurobi's \textit{Threads} parameter to 1 and turned off the \textit{Presolve} option.

    For the computational experiments we have used benchmark instances from two very well-known datasets from the hub location literature:
    \begin{itemize}
        \item The Civil Aeronautics Board (CAB) dataset, contains data from 100 cities in the United States of America \citep[see][]{Okelly87}. It provides symmetric commodities demands, $w^{r}$ and unit routing costs $c_{ij}$. From this dataset we have generated test instances of  sizes $n \in \{25, 50, 75, 100\}$. As usual, for each value $n$, the instance is generated with the data corresponding to the first $n$ entries.
        \item The Australian Post (AP) dataset, first published by \cite{Ernst96}, contains data from 200 nodes with non-integer asymmetric demands for the commodities $w^{r}$ and unit routing costs $c_{ij}$. In these instances self flows $w_{ii}\ne 0$, $i\in V$. Still, we ignored them as we assume that  $o(r)\ne d(r)$, $r\in R$. From this dataset we have generated test instances for values of $n \in \{25, 50, 75, 100, 120, 140, 160, 180, 200\}$. Again, for each value $n$, the instance is generated with the data corresponding to the first $n$ entries
    \end{itemize}

    We have used interhub discount factors $\alpha\in\{0.2, 0.5, 0.8\}$ and, unless stated otherwise, access and distribution weights $\gamma=\theta=1$.

    Setup costs for hub activation and interhub link activation have been calculated as in \cite{Wandelt22}. In particular, the authors follow the methodology by \cite{Ebery00} for setup costs for hub activation, whereas the activation cost of edge $km\in E$ is computed as
    $$G_{km}=\frac{\sum_{r\in R} w^rc_{km}}{n^2}.$$

    The following variants of our B\&S algorithm have been compared:
    \begin{itemize}
        \item BS: This is the basic B\&S algorithm without any further feasibility or optimality cuts.
        \item BSF: This is the basic B\&S algorithm where feasibility Benders cuts \eqref{Feas:D2} are generated and incorporated to the formulation of R-GHLP as explained in Section \ref{sec:B&S}.
        \item BSO:  This is the basic B\&S algorithm where optimality cuts \eqref{eq1_1}-\eqref{eq3} are generated and incorporated to the formulation of R-GHLP as explained in Section \ref{sec:B&S}.
    \end{itemize}

    Since optimality cuts \eqref{eq1_1}-\eqref{eq3} are only valid for problems where optimal routing paths contain at most one interhub arc, the BSO variant has been tested for the $H$-median only.

    Our formulations and solution algorithms have been tested  for the different models, for both the SA and MA policies. The obtained results have been compared against: $(a)$ our own implementations of the 3- and 4-index formulations, denoted as 3I and 4I, respectively, up to the dimensions when this was possible ($n=50$ for 3I and $n=25$ for 4I); and $(b)$ the best  results obtained with BBC methods published in the literature. These results have been taken from \cite{Wandelt22} and from \cite{Espejoetal23}. The former present an extensive comparative analysis of BBC methods developed by different authors for solving different HLPs, and thus provide a unified reference for comparison. When using this reference, for each tested model, we have used as benchmark method the one  \textit{recommended}  by the authors  which is the one performing best among the ones they compare.  More recently, \cite{Espejoetal23} have developed a specialized BBC algorithm for the SA $H$-median that we also use for comparison. In particular, the tested GHLP models and benchmark results we have used are the following:
    \begin{itemize}
        \item $H$-median. Our results for these models  are compared against our 3I and 4I implementations, and the results reported in \cite{Wandelt22} corresponding to \cite{Ghaffarinasab2018} with  CAB and AP instances with up to $n=50$ nodes for SA, and to \cite{Camargo2008} with  CAB and AP instances with up to $n=75$ nodes for MA.\\
        For the $H$-median with SA policy, our results are also compared against those of \cite{Espejoetal23} whose BBC algorithm solves this variant of the problem for AP instances with up to 200 nodes. For this comparison we set the cost weights to those used in the referenced work, i.e.,  $\gamma=3$, $\theta=2$, and  $\alpha=0.75$ for access, distribution and interhub links, respectively.
        \item $G$-median: For these models, our results  are compared against those reported by \cite{Wandelt22}, with CAB and AP instances with $n=25$,  corresponding to their own BBC implementation for SA, and against their adaptation to the $GHLP$ of the BBC solution algorithm of \cite{deCamargo2017} for GHLP's with hop constraints for MA.
    \end{itemize}

    We do not compare our results with those obtained with cutting plane methods based on Benders decomposition (also referred to as \textit{row generation methods}) \citep[see, e.g.][]{Contrerasetal11,deCamargo2009}, since the performance of these methods  largely relies on the quality of initial solutions produced by heuristics and on refinements, tailored for each specific model and allocation policy. We highlight  the generality of  both $(i)$ the GHLP formulation that we use, {which applies to both allocation policies and with minor modifications can be adapted to a wide range of HLP models}, and $(ii)$ the B\&S solution algorithm, which includes no specialized heuristic (a rudimentary rounding heuristic is applied just once) or \textit{ad hoc} refinements of its basic ingredients.

    Several authors have observed \citep[see, e.g.,][]{CampbellOkelly25} that the hub arcs in optimal solutions to traditional HLPs, often carry much smaller flows than some of the access arcs do. This is a clear limitation of these models, which undermines the basic premise for economies of scale. As mentioned, our formulation already addresses this concern, by adapting the values of the coefficients in constraints \eqref{H_X2_1-0}-\eqref{f_Y-0}. In order to empirically illustrate this capability, in our last series of computational tests, we apply BS to an extension of the $GHLP$ where we impose lower bounds on the flows that must circulate through activated interhub links, which we refer to as  $GHLP$ with flow bounds ($G-FB$). In particular, the left-hand-side of Constraints \eqref{f_Y-0} become $\ell_{km}\,\overline y_{km}\leq t_{km}+t_{mk}$, where $\ell_{km}$ is the parameter that determines the minimum flow that must  circulate through $km$ if is is activated as an interhub edge.

    Table \ref{tab:compu} summarizes the tested models and solution algorithms. It also gives references to the results from the literature against which our results have been compared, as well as the tables where these comparative results are presented:

    \begin{table}[H]
		\begin{tabular}{c|c|c|c|c|c|c|c|c|c}
            \multicolumn{3}{c|}{}                   & \multicolumn{1}{c|}{3I} & \multicolumn{1}{c|}{4I} & \multicolumn{1}{c|}{BS}  & \multicolumn{1}{c|}{BSF} & \multicolumn{1}{c}{BSO}& \multicolumn{1}{c}{BBC}& \multicolumn{1}{c}{Tables}\\     \hline
           \multicolumn{2}{c}{\multirow{2}{*}{$H$-median}}           & SA   & \checkmark &\checkmark&           \checkmark              &           \checkmark              &      \checkmark       &  \checkmark [1] taken from  [4], and [5]  & \multirow{2}{*}{\ref{comp-BS-HG}, \ref{comp_best:GH}}\\
    		 \multicolumn{2}{c}{}	                                  & MA   & \checkmark &\checkmark&           \checkmark             &           \checkmark              &      \checkmark      &  \checkmark [2] taken from  [4] & \\ \hline
    			\multicolumn{2}{c}{\multirow{2}{*}{$G$-median}}       & SA   &&&           \checkmark&           \checkmark             &      & \checkmark [4] & \multirow{2}{*}{\ref{comp-BS-HG}, \ref{comp_best:GH}}\\
    		 \multicolumn{2}{c}{}	                                  & MA   &&&           \checkmark&           \checkmark             &      & \checkmark Adaptation of  [3] taken from [4]\\ \hline
          \multicolumn{2}{c}{\multirow{2}{*}{$G-FB$}} & SA & &&   \checkmark& & && \multirow{2}{*}{\ref{tab:capacitated}}\\
           \multicolumn{2}{c}{} & MA & &&   \checkmark& & &
		\end{tabular}
        \caption{Summary of computational experiments\label{tab:compu}}
        {\footnotesize
        \text{[1]:} \cite{Ghaffarinasab2018};
        \text{[2]:} \cite{Camargo2008};
        \text{[3]:} \cite{deCamargo2017};
        \text{[4]:} \cite{Wandelt22};
        \text{[5]:} \cite{Espejoetal23}}
	\end{table}
    
    Since the computer where we have ran our experiments is different from those used in \cite{Wandelt22} and \cite{Espejoetal23}, it is difficult to establish a precise equivalence between all computing times. In order to make the comparison as fair as possible, we have checked the specifications given by the Standard Performance Evaluation Corporation (SPEC). The referenced computers that resemble the most to ours and that of \cite{Wandelt22} and \cite{Espejoetal23} are an AMD\,R7\,5800X, \footnote{\url{https://www.spec.org/cpu2017/results/res2022q3/cpu2017-20220718-32225.html}} an INTEL\, E5-2650v4 \footnote{\url{https://www.spec.org/cpu2017/results/res2018q1/cpu2017-20180216-03626.html}}, and an INTEL XEON\, W-2295\footnote{ \url{https://www.spec.org/cpu2017/results/res2019q4/cpu2017-20191015-19201.html}}, respectively. The floating point speed scores they receive are 47.5, 68.4 and 73.2 respectively. Thus, for a fair comparison, in all our tables the computing  times taken from \cite{Wandelt22} are scaled by a factor $\frac{68.4}{47.5}\equiv 1.4$ and those from \cite{Espejoetal23} by a factor $\frac{73.2}{47.5}\equiv 1.5$. 

    \subsection{Implementation details}\label{imp:details}
        After some preliminary testing, in the final version of our solution algorithms we have used the choices detailed next for procedures and parameters.
        \begin{itemize}
            \item The heuristic option of the solver is turned off. Instead, we apply the following simple rounding heuristic immediately before leaving the root node to obtain a first incumbent. Let $(\bar s, \bar f)$ be the solution to the final LP relaxation of R-GHLP at the root node. We define the solution $(\hat s, \hat f)$, with $\hat f=f(\hat s)$, where the components of $\hat s$ take binary values: 1 if the corresponding component of $\bar s$ is greater than or equal to the  threshold $\varepsilon=0.6$, and 0 otherwise. In case no access/distribution arc is activated for some o/d, then the arc with the largest LP value is set to value 1.
            \item For the separation of inequalities of families of exponential size, among the alternatives discussed in Sections \ref{sec:separa} and \ref{sec:separa-connect} we apply the following. The aggregated demand constraints \eqref{Demand-0} are separated heuristically, by computing the tree of min cuts relative to the capacities $\widehat Q_{ij}=\max\{\overline Q_{ij},\, 0\}$. The tree is computed with the algorithm of \cite{Gusfield} although, instead of computing the entire tree, we stop the algorithm as soon as a violated constraint is found. The connectivity constraints \eqref{connect-backbone-0} are separated exactly by computing the tree of min cuts in the subgraph induced by the edges such that $\bar y_{ij}>0$. The algorithm of \cite{Gusfield} is used as just explained.
        \end{itemize}
        
        The remainder of this section is structured as follows. In Section \ref{sec:LP} we analyze the LP bounds produced by $R-GHLP$, by comparing them with those of the LP relaxations of 3I and 4I for each of the considered models. The comparison among BS, BSF, and BSO is discussed in Section \ref{comparison:BS-BSO-BSF}, whereas in Section \ref{results-p} we compare our results with those in the literature. Finally, Section \ref{results-large} summarizes our results for all the considered instances, including those with up to 200 nodes.
    
    \subsection{Comparison of LP bounds} \label{sec:LP}
        Figures 1-2 in the Electronic Companion of this paper (EC), compare the LP bounds produced by $R-GHLP$ with those of the LP relaxations of 3I and 4I for each of the considered models. As mentioned, due to memory limitations, the LP of 4I formulations could only be solved for instances with $n\leq 50$, whereas the LP  of 3I formulations could be solved for instances with $n\leq 100$. In our comparison, instances with $n\in\{25, 50\}$ are classified as ``small'' and instances with $n\in\{75, 100\}$ as ``medium''. For small-size instances, we can observe the (already known) advantage of 4I bounds over 3I ones. As expected, $R-GHLP$ produces LP bounds which never outperform those of the 4I formulation. Still, in general, the $R-GHLP$ bounds are very competitive with those provided by the 3I formulations. For the MA policy, the $R-GHLP$ bound often outperforms that of 3I across all models, particularly as the size of the instances increase. Still, for the SA policy, the 3I bound always outperforms that of $R-GHLP$, although the percentage deviation of the bound of $R-GHLP$ relative to that of the 3I formulation never exceeds {10}\%. We must however recall that $R-GHLP$ can handle instances with up to 200 nodes without memory limitations, which is not possible with the compared traditional formulations.
    
    \subsection{Comparison among BS, BSO and BSF} \label{comparison:BS-BSO-BSF}
        In order to compare the performance of the three alternative solution methods for the proposed GHLP models, we have used the CAB instances. The obtained results are summarized in Table \ref{comp-BS-HG}. Detailed results for the different allocation policies and data sets can be found in Tables 1-8 of the EC. Table \ref{comp-BS-HG} has three blocks of columns for each model, one for BS, one for BSF, and one for BSO, which has been used for $H$-median but not for $G$-median, since it is not valid for this model. Each block has one column for the number of explored nodes (\textit{\#Exp}) and another one for the computing time (\textit{cpu}). The third column of the blocks for BSF corresponds to the number of feasibility cuts \eqref{Feas:D2} generated. The blocks for BSO have one column for the number of optimality cuts of each type generated: \textit{Acc.} for the number of cuts \eqref{eq2}-\eqref{eq3} associated with access/distribution arcs and \textit{Int.} for the number of cuts  associated with interhub arcs \eqref{eq1_1}- \eqref{eq1_2}.
    
        As can be seen, all three algorithmic frameworks perform well, in general, in terms of both computing times and number of nodes explored in the search tree. Both BSF and BSO are effective in generating cuts of their respective families, although this comes at the expense of increasing not only the computing times but also the number of explored nodes.  In some cases, BSF fails in optimally solving the instances within the allowed time. This is clearly due to the computational burden involved in the solution of the auxiliary problem $D_{aux}$, which is required for the separation of feasibility cuts \eqref{Feas:D2}. Overall, BS clearly outperforms  BSF and BSO across all tested models and benchmark instances, both in terms of computing times and number of explored nodes. Because of that, this algorithm is used for all other computational experiments reported here.
    
    \subsection{Comparison with results from the literature} \label{results-p}
        Comparative results of BS against other methods in the literature for $H$-median and $G$-median instances are summarized in Table \ref{comp_best:GH}. The table has two main blocks of columns, one for SA and one for MA. In its turn, the SA block is divided in three smaller blocks: the first two ones are for comparison with the $H$-median results reported in \cite{Wandelt22} and \cite{Espejoetal23}, respectively, and the third one for comparison with the $G$-median results reported in \cite{Wandelt22}. The MA block is divided in two smaller blocks, both for comparison with the results reported in \cite{Wandelt22}, the first one for $H$-median, and the second one for $G$-median. Except for the block for comparison with \cite{Espejoetal23}, which is labeled as ``$H$-median [5] ($\alpha=0.75$)'', results are given for CAB and AP instances with up to 100 nodes for both SA and MA with values of $\alpha\in\{0.2, 0.5, 0.8\}$. On the contrary, the results of ``$H$-median [5] ($\alpha=0.75$)'' are given for instances with the same characteristics as in the referenced work:  larger AP instances with $n\in\{100, 125, 150, 150,200\}$, the SA policy, and  discount factor $\alpha=0.75$. In all blocks, 3I and 4I refer to results from 3- and 4-index formulations for the respective models, \textit{best} for results of the best approach for the corresponding model, and \textit{BS} for the results of our BS solution algorithm.
        
        As could be expected, BS outperforms both 4I and 3I for all tested benchmarks, not only because these formulations are only able to handle small size instances, but also because the computing times of BS are notably smaller.
        
        The results of Table \ref{comp_best:GH} show a clear advantage of BS  over the best results reported in \cite{Wandelt22} for the considered models.  On the contrary,  in general, the results of \cite{Espejoetal23} are remarkably better than our results. This can possibly explained by a combination of factors: on the one hand, similarly to our formulations the formulation of \cite{Espejoetal23} uses 2-index variables only. On the other hand, they apply a highly specialized algorithm, which, unfortunately cannot be extended to the MA policy or to the $G$-median, where routing paths can have more than one interhub arc. Still, note  that for the largest instance with 200 nodes BS clearly outperforms the method of \cite{Espejoetal23}.
        
        Overall, Table \ref{comp_best:GH} shows the effectiveness of BS. As we have shown, a generic combination of \textit{GHLP + B\&S} outperforms, with very few exceptions, existing highly specialized  BBC solution algorithms in the literature.
    
    \subsection{Analysis of results for larger instances} \label{results-large}
        Table \ref{tab:all}  summarizes the numerical results we have obtained in our computational experiments with BS with instances of up to 200 nodes. The table has two blocks of columns, one for each considered model, which, in turn, are divided in two smaller blocks of two columns each, one for SA and another one for MA. These two columns give information on the number of nodes explored in the search tree (\textit{\#Exp}) and the computing time (\textit{cpu}), respectively. The table has two blocks of rows, one for the CAB instances and another one for the AP instances. The first three columns of the table show the dataset, number of nodes ($n$), and value of the parameter $\alpha$, respectively.
        
        BS was able to solve to proven optimality all CAB and AP instances with up to $n=200$, with the exception of those with $n=200$ for $\alpha=0.8$. As can be seen, the computing times for these models are, in general, small for instances of this size, and the number of nodes explored in the search tree, tends to be remarkably small. Nevertheless, a clear \text{jump} can be observed in the number of explored nodes for the value $\alpha=0.8$. We think this can be explained because, for $\alpha=0.8$, the routing cost of a given arc changes very little when it is used as an interhub or an access/distribution arc.
        
        Altogether, we think that BS has a remarkable performance. Again we recall that use a formulation, which is the same \textit{sauf} minor modifications for all the considered models and allocation policies, and that the same solution algorithm, without enhancements or refinements, has been applied to all models.
    
    \subsection{Imposing minimum flows through interhub arcs}
        The main difference of BS for $G-FB$ than for the models previously analyzed is that, because of the lower bounds on arc flows, the subproblem at the nodes of the enumeration tree with integer design solutions can no longer  be solved with a shortest path algorithm.
        For our experiments, we solve an LP formulation, which is an adaptation of the feasibility problem $Aux_{(\bar s, \bar f)}$ to the current solution network, eliminating upper bounds on the flows, but adding the constraints $\ell_{km}\bar y_{km}\leq \sum_{i\in V}t^i_{km}$.
        
        To the best of our knowledge, there is no literature on computational tests on HLPs imposing lower bounds on  flows through activated interhub links. Thus, we have done some preliminary tests in order to find parameter values that produce solution networks that open more than just one or two hubs, and differ from the solutions obtained in our previous experiments. Based on the results of these tests,  we have set  $\ell_{km}=25\left(w_{km}+w_{mk}\right)$ for all $km\in E$. Moreover, in order to better visualize the impact of these bounds, we have reduced the setup costs for activating the hub nodes {by a factor of 10, i.e., $\bar f_k=\frac{f_k}{10}$}. All other parameters remained as before.
        
        Table \ref{tab:capacitated} summarizes the results we have obtained with small-size CAB and AP instances with up to 50 nodes. As can be seen, in general, CAB instances were easier to solve than those of AP. All instances were optimally solved although the computing times were notably larger than those without flow lower bounds. This is, of course, because the computing effort needed to solve the auxiliary problems is notably larger than that for finding shortest paths. This can be observed in the EC, by comparing columns $CPU_{SP} (\%)$ in Tables 3-4 and 7-8, with columns $CPU_{Aux} (\%)$ in Tables 13-14. We can also observe that, contrary to what usually happens with models without flow lower bounds, the difficulty for solving MA instances tends to be higher than that for SA instances.
        
        For further insight, the EC offers Figure 3 and Tables 9-12 illustrating the differences between the solutions of the $G$-median and the $G-FB$ for an specific CAB instance with 20 nodes and $\alpha=0.5$.

\section{Conclusions}\label{sec:conclu} 
    In this paper we have introduced a new formulation and solution framework for HLPs. The general principle behind the formulation is to reduce number of decision variables, which eliminates the need of projecting them out, so it is possible to design solution frameworks that do not rely on feasibility and optimality cuts generated during the solution process. The proposed formulation uses 2-index aggregated flow variables and includes a set of aggregated demand constraints, which are novel in hub location. It is very versatile as, with minor modifications, it applies to a large class of HLPs for both SA and MA policies. General purpose feasibility and optimality inequalities have also been developed. The formulation has been integrated within a B\&S solution framework, which leverages the nested structure of HLPs  by solving an auxiliary subproblem at selected nodes of the enumeration tree. The results from our computational experiments show the good performance of the proposal: instances with up to 200 nodes are solved to proven optimality with the basic framework for $H$-median and $G$-median. In particular, we have observed that, with few exceptions, our B\&S without any further enhancement outperforms existing solution algorithms, highly specialized for their respective specific models. The B\&S has also been applied to an extension of the $G$-median where lower bounds on the flows through activated interhub arcs are imposed. To the best of our knowledge such models have not been tested computationally before.
    
    Therefore, in our opinion, the proposed formulation and solution framework open a new, highly promising, avenue of research. From the modeling perspective, the main challenge is now to specialize the formulation template for specific families of HLPs. From the B\&S solution framework, the main  challenges are twofold.  On the one hand, to investigate whether effective strategies can be devised for the integration of the feasibility and optimality cuts within a B\&S. On the other hand, most of our experiments focused on uncapacitated HLPs for which the lower-level subproblems at the nodes of the search tree are easy to solve. Adding capacity (or other type of) constraints to the models, renders this approach more demanding. Thus, a relevant challenge is to derive efficient solution methods for more general second-level subproblems.

\section{Code and Data Disclosure}\label{sec:Code and Data Disclosure}
	Implemented code and data used in the current work will be made available shortly on a github repository after improving the readability, the rough version is available on request.

\newpage
\renewcommand{\thesection}{A-\arabic{section}}
\setcounter{section}{0}
    \section{Formulation for MA R-GHLP \label{appendix:R-GHLP}}
        $$(R-GHLP)\quad \min\, F(z)+G(y)+\sum_{(i,j) \in A}\!\!\!c_{ij}\left (\gamma h^1_{ij}+ \alpha f_{ij}+ \theta h^2_{ij}\right)$$ 
        \noindent Subject to:
        \begin{align}
            &  \eqref{rel_X-Y-0}-\eqref{rel_z-X-2_2-bis}\notag\\
        	&  (1-z_i)\,O_i=\sum_{(i,j)\in A}h^1_{ij}  &&  i\in V & \eqref{leaves_i_H-0}\notag\\
        	&  (1-z_i)\,D_i=\sum_{(j,i)\in A}h^2_{ji}  &&  i\in V & \eqref{enters_i_H-0}\notag\\
        	&  O_i\, z_i+\sum_{j\ne i}h^1_{ji}+\sum_{j\ne i}t_{ji}= \notag\\
            &  D_i\, z_i+ \sum_{j\ne i}h^2_{ij}+\sum_{j\ne i}t_{ij} &&  i\in V & \eqref{Flow_Balance-0}\notag\\
        	&  (t+h^1+h^2)(\delta^+(S))\geq W(S:S^c) &&  S\subset V:\,\exists r\in R, \text{ s.t. } (o^r, d^r)\in\delta^+(S) &\eqref{Demand-0}\notag\\
        	&  w_{ij} x^1_{ij}\leq h^1_{ij}\leq O_i x^1_{ij} &&  (i, j)\in A & \eqref{H_X2_1-0}\notag\\
        	&  w_{ij} x^2_{ij}\leq h^2_{ij}\leq D_j x^2_{ij} &&  (i, j)\in A & \eqref{H_X2_2-0}\notag\\
        	&  (w_{km}+w_{mk}) y_{km}\leq t_{km}+t_{mk}\leq \overline W y_{km} &&  km \in E & \eqref{f_Y-0} \notag\\
            &  \eqref{domain_z-0}-\eqref{domainY-0},\, \eqref{domain_F-H-0-0}.\notag
    	\end{align}

    \section{Proofs \label{appendix:proofs}}	
    	\noindent\textbf{Proof of Proposition \ref{propo0}:}
    	Every edge $ij\in E$ produces two different inequalities: \eqref{sp:1_a}, which involves the flow through arc $(i, j)$, and \eqref{sp:1_b}, which involves the flow through the opposite arc $(j, i)$. We prove the validity of  \eqref{sp:1_a}, as the proof for the opposite direction is the same  interchanging $i$ and $j$.
    	
    	Inequality \eqref{sp:1_a} trivially holds when $y_{ij}=0$, so let $s=(z,\, y,\, x^1,\, x^2)\in\mathcal S$ with $y_{ij}=1$, be a feasible solution network, possibly  $s\ne \overline s$, and $f\in \mathcal L(s)$ an $R$-feasible flow in $G(s)$.
    	Since $y_{ij}=1$, the value of the right-hand-side of the inequality is $t_{ij}(\overline s)$. Furthermore, the flow $t_{ij}$ will be at least $t_{ij}(\overline s)$ unless $G(s)$ contains some element that is not activated in $G(\overline s)$  that would allow for a better $r$-path for some commodity routed through $(i,j)$ in $\overline s$ (i.e., for some $r\in R_{ij}(\overline s)$).
    	
    	Consider the following cases:
    	\begin{enumerate}
        	\item[$(i)$:] For all $r\in R_{ij}(\overline s)$, it holds that $z_k=0$ for all $k\in Z^{r}(\overline s)$  and $y_{km}=0$ for all $km\in Y^{r}(\overline s)$.\\
        	Then, $P^r(s)=P^r(\overline s)$, for all $r\in R_{ij}(\overline s)$, since $G(s)$  contains no element that could produce an $r$-path better than $P^r(\overline s)$ for some $r \in R_{ij}(\overline s)$. Hence, $t_{ij}$ should be at least $t_{ij}(\overline s)$ and the inequality holds.
        	\item[$(ii)$:] There exists $r\in R_{ij}(\overline s)$ such that $z_k=1$ for some $k\in Z^{r}(\overline s)$.\\
        	In this case, commodity $r$ will no longer be routed through $P^r(\overline s)$, since the support graph $G(s)$ would contain the $r$-path $o-k-d$, which has a lower routing cost than $P^r(\overline s)$. Thus, the value of $t_{ij}$ would no longer account for the demand $w^r$. This is compensated by the second term of left-hand-side, which accounts for $w^r$ since $z_k=1$.
        	\item[$(iii)$:] There is some $r\in R_{ij}(\overline s)$ such that $y_{km}=1$ for some $km\in Y^{r}(\overline s)$ with $k, m\notin Z^{r}(\overline s)$.\\
        	As in the previous case, commodity $r$ will no longer be routed through $P^r(\overline s)$, since the support graph $G(s)$ would contain the $r$-path $o-k-m-d$, which has a lower routing cost than $P^r(\overline s)$. Again, the value $t_{ij}$ would not account for the demand $w^r$. Nevertheless, this would again be compensated by the second term of left-hand-side since $y_{km}=1$. \hfill $\blacksquare$
    	\end{enumerate}
        
    	\noindent\textbf{Proof of Proposition \ref{propo:access}:}
    	\begin{itemize}
        	\item[$(a)$] We first prove the validity of \eqref{sp:2_access_2-1} for a given arc $(0,\, i)\in A$. Since $R^1_{oi}(\overline s)=R^{1,1}_{oi}(\overline s)\cup R^{1,2}_{oi}(\overline s)$, we observe that its right-hand-side takes the value 0 when $R^1_{oi}(\overline s)=\emptyset$, i.e., $(o,\, i)$ is not activated as an access arc. Hence, let $s=(z,\, y,\, x^1,\, x^2)\in\mathcal S$ with $x^1_{oi}=1$, be a feasible solution network, possibly  $s\ne \overline s$, and $f\in \mathcal L(s)$ an $R$-feasible flow in $G(s)$.
        	We now consider the following cases:
        	\begin{enumerate}
        		\item[$(i)$:] For all $r\in {R}^1_{oi}(\overline s)$, it holds that $z_k=0$ for all $k\in Z^{r}(\overline s)$  and $y_{km}=0$ for all $km\in Y^{r}(\overline s)$.\\
        		As explained, if $G(s)$ contains no element that may produce an $s$-$r$-path better than $P^r(\overline s)$, for all $r\in {R}$, then the flow through $(o, i)$, $h^1_{oi}$ should be at least the one indicated by the right-hand-side of the inequality.
        		Therefore, the inequality holds in this case.
        		\item[$(ii)$:] There is some $r\in {R}^1_{oi}(\overline s)$ such that $z_k=1$ for some $k\in Z^{r}(\overline s)$.\\
        		Commodity $r$ would no longer be routed through $P^r(\overline s)$, since $G(s)$ would contain the $r$-path $o-k-d$, with
        		a lower cost than $P^r(\overline s)$. Thus, $h^1_{oi}$ would no longer account for the demand $w^r$. This would be compensated by the second term of left-hand-side, which accounts for $w^r$ since $z_k=1$.
        		\item[$(iii)$:] There is some $r\in {R}^1_{oi}(\overline s)$ such that $y_{km}=1$ for some $km\in Y^{r}(\overline s)$, with $k\ne i$, $k,m\notin Z^{r}(\overline s)$.\\
        		Commodity $r$ would no longer be routed through $P^r(\overline s)$, since $G(s)$ would contain the $r$-path $o-k-m-d$, with a lower routing cost than $P^r(\overline s)$. Again, $h^1_{oi}$ would no longer account for$w^r$, although this would be compensated by the second term of left-hand-side since $y_{km}=1$. 
        	\end{enumerate}
        	\item[$(b)$]  The validity of \eqref{sp:2_access_2_2} follows from that of \eqref{sp:2_access_2-1}, using arguments similar to those used for the reinforcement of \eqref{sp:1_a} and \eqref{sp:1_b}. \hfill $\blacksquare$
    	\end{itemize}
    	
    	\noindent\textbf{Proof of Theorem \ref{teo}:}
    	Since we have proven that inequalities \eqref{eq1_1}-\eqref{eq3} are valid for the $H$-median, 
    	it only remains to see that for any  solution $(s, f)$ with $s\in\mathcal S$, $f\in\mathcal F(s)$, that satisfies \eqref{eq1_1}-\eqref{eq3}, then $f$ is $R$-feasible for $s$. Let $(\bar s, \bar f)$ be a solution in the domain of the formulation $H$-med 
        and suppose that $\bar f\in\mathcal F(\bar s)$ is not $R$-feasible for $\bar s$. That is, the flow $\bar f$ can not be decomposed in individual $s$-$r$-paths. Then, the auxiliary problem $Aux_{(\bar s, \bar f)}$ will be unfeasible so at least one of the feasibility conditions \eqref{feas1}-\eqref{feas3} will not hold. Thus, at least one of the following cases will hold:
    	\begin{itemize}
        	\item  There exists $(\bar k,\, \bar m)\in A$ such that $\bar t_{\bar k \bar m}< t_{\bar k \bar m}(\bar s)$. In this  case, the inequality \eqref{eq1_1}  would be violated for arc $(\bar k,\, \bar m)$, contradicting that $\bar s$ satisfies all inequalities \eqref{eq1_1}-\eqref{eq3}.
        	\item  There exists $(o,\, \bar k)\in A$ such that  $\overline h^1_{o\bar k}\geq h^1_{o\bar k}(\bar s)$.  In this  case, the inequality \eqref{eq2} would be violated for arc $(o,\, \bar k)$, contradicting that $\bar s$ satisfies all inequalities \eqref{eq1_1}-\eqref{eq3}.
        	\item There exists $(\bar m,\, d)\in A$ such that $\bar h^2_{\bar md}\geq h^2_{\bar jd}(\bar s)$. In this  case, the inequality \eqref{eq3} would be violated for arc $(\bar m,\, d)$, contradicting that $\bar s$ satisfies all inequalities \eqref{eq1_1}-\eqref{eq3}. \hfill $\blacksquare$
    	\end{itemize}

    \section{Procedure for finding feasible $R$-flows $f(\bar s)\in \mathcal L(\bar s)$}\label{shortest:app}
    	\begin{algorithm}
    		\caption{Procedure for finding $f(\bar s) \in \mathcal L(\bar s)$}\label{R-feas:alg}
            \small
    		\hspace*{\algorithmicindent} \textbf{Input:} Design solution $\bar s\in\mathcal S$
    		{\setlength{\baselineskip}{10pt}
    			\begin{algorithmic}[1]
    				\For{$r\in R$}
    				\State $P^r(\bar s)\gets \text{ shortest path from $o^r$ to $d^r$ in } G(\bar s)$
    				\State $A^r(\bar s)\gets \{(i, j)\in G(\bar s): (i, j)\in P^r(\bar s)\}$
    				\EndFor
    				\For{$(i,j)\in G(\bar s)$}
    				\State $flow\gets\sum_{r\in R: (i, j)\in A^r(\bar s)}w^r$
    				\If{$\bar y_{ij}+\bar y_{ji}=1$}
    				\State $t_{ij}(\bar s)\gets flow$
    				\ElsIf{$\bar x^1_{ij}=1$}
    				\State $h^1_{ij}(\bar s)\gets flow$
    				\ElsIf{$\bar x^2_{ij}=1$}
    				\State $h^2_{ij}(\bar s)\gets flow$
    				\EndIf
    				\EndFor
    		\end{algorithmic}}
    		\hspace*{\algorithmicindent} \textbf{Output:} {$R$-feasible flow $f(\bar s)\in \mathcal L(\bar s)$}
    	\end{algorithm}

    \section{Figures and tables with numerical results}\label{appendix:1}
		\begin{minipage}[t]{.48\textwidth}
			\begin{table}[H]
                \resizebox{\textwidth}{!}
                {
                    \begin{tabular}{c|rrrrrrrrrr}
                        {$c_{ij}$} & {1} & {2} & {3} & {4} & {5} & {6} & {7} & {8} & {9} & {10} \\ \hline
                        1 & 0 & 414 & 1426 & 315 & 528 & 522 & 1169 & 2066 & 280 & 899 \\
                        2 & 414 & 0 & 1833 & 167 & 852 & 784 & 1574 & 2454 & 288 & 1275 \\
                        3 & 1426 & 1833 & 0 & 1738 & 1269 & 1412 & 613 & 704 & 1670 & 950 \\
                        4 & 315 & 167 & 1738 & 0 & 692 & 618 & 1438 & 2381 & 121 & 1124 \\
                        5 & 528 & 852 & 1269 & 692 & 0 & 143 & 812 & 1969 & 576 & 449 \\
                        6 & 522 & 784 & 1412 & 618 & 143 & 0 & 948 & 2111 & 498 & 573 \\
                        7 & 1169 & 1574 & 613 & 1438 & 812 & 948 & 0 & 1288 & 1340 & 391 \\
                        8 & 2066 & 2454 & 704 & 2381 & 1969 & 2111 & 1288 & 0 & 2328 & 1649 \\
                        9 & 280 & 288 & 1670 & 121 & 576 & 498 & 1340 & 2328 & 0 & 1014 \\
                        10 & 899 & 1275 & 950 & 1124 & 449 & 573 & 391 & 1649 & 1014 & 0
                    \end{tabular}
                }
                \caption{Unit routing costs. \label{tab:cab10_c}}
			\end{table}
		\end{minipage}
		\hfill
		\begin{minipage}[t]{.48\textwidth}
			\begin{table}[H]
                \resizebox{\textwidth}{!}{
                    \begin{tabular}{c|rrrrrrrrrr}
                        {$w_{ij}$} & 1 & 2 & 3 & 4 & 5 & 6 & 7 & 8 & 9 & 10 \\ \hline
                        1 & 0 & 34 & 45 & 84 & 426 & 51 & 11 & 8 & 453 & 5 \\
                        2 & 34 & 0 & 131 & 6 & 693 & 80 & 24 & 5 & 1262 & 22 \\
                        3 & 45 & 131 & 0 & 36 & 398 & 37 & 390 & 40 & 1215 & 54 \\
                        4 & 84 & 6 & 36 & 0 & 452 & 26 & 9 & 3 & 5 & 7 \\
                        5 & 426 & 693 & 398 & 452 & 0 & 3746 & 533 & 45 & 6469 & 1223 \\
                        6 & 51 & 80 & 37 & 26 & 3746 & 0 & 31 & 11 & 448 & 70 \\
                        7 & 11 & 24 & 390 & 9 & 533 & 31 & 0 & 6 & 230 & 198 \\
                        8 & 8 & 5 & 40 & 3 & 45 & 11 & 6 & 0 & 29 & 1 \\
                        9 & 453 & 1262 & 1215 & 5 & 6469 & 448 & 230 & 29 & 0 & 161 \\
                        10 & 5 & 22 & 54 & 7 & 1223 & 70 & 198 & 1 & 161 & 0
                    \end{tabular}
                }
                \caption{Commodities demands. \label{tab:cab10_w}}
			\end{table}
		\end{minipage}

        \begin{figure}[H]
            \centering
    		\includegraphics[width=.35\textwidth, keepaspectratio]{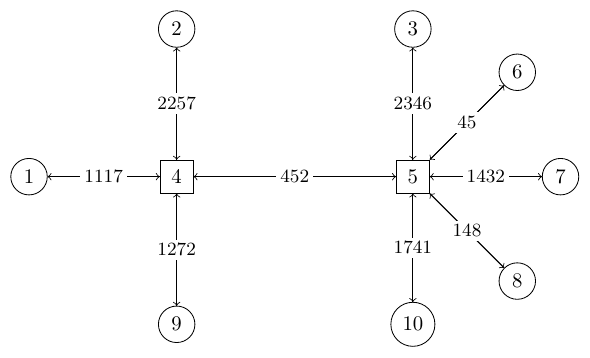}
    		\caption{Optimal solution to R-GHLP without aggregated demand constraints, $(s^1, f^1)$.\label{fig:cab10_nodmnd_flows}}
        \end{figure}
	
    	\begin{figure}[H]
            \centering
    		{\subcaptionbox{\small Optimal R-GHLP solution, $(s^2, f^2)$. \label{fig:cab10_dmnd_flows}}{\includegraphics[width=.35\textwidth, keepaspectratio]{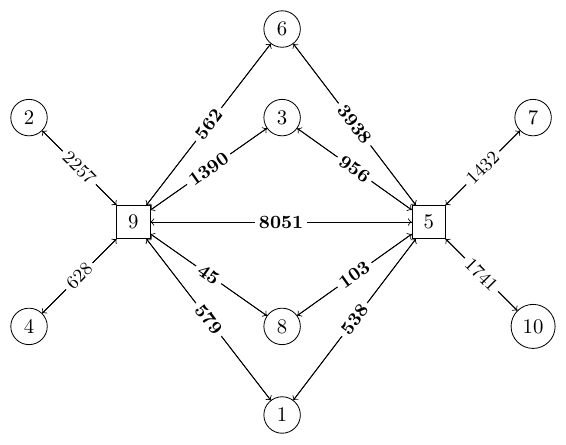}}
            \hfill
            \subcaptionbox{\small Optimal GHLP solution, $(s^*, f^*)$. \label{fig:cab10_feas_flows}}{\includegraphics[width=.35\textwidth, keepaspectratio]{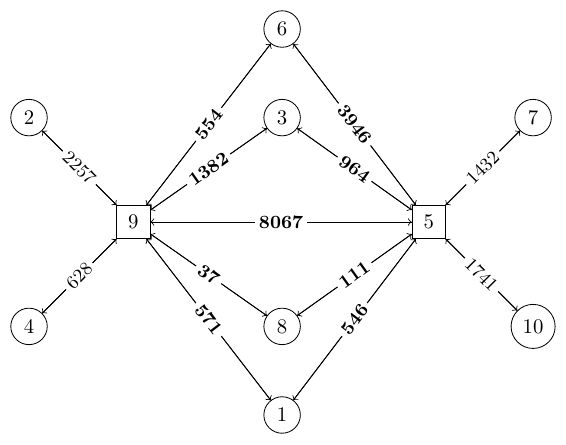}}
    		}
    		\caption{Optimal solutions for R-GHLP ($(s^2, f^2)$) \& GHLP $(s^*, f^*)$}
    	\end{figure}
		
    	\begin{figure}[H]
            \centering
    		{
    			\subcaptionbox{\small SA policy. \label{SA}}{\includegraphics[width=0.35\textwidth]{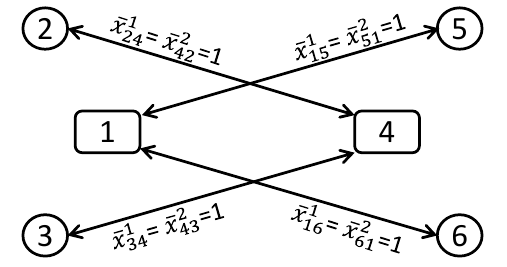}}
    			\hfill
    			\subcaptionbox{\small MA policy. \label{MA}}{\includegraphics[width=0.35\textwidth]{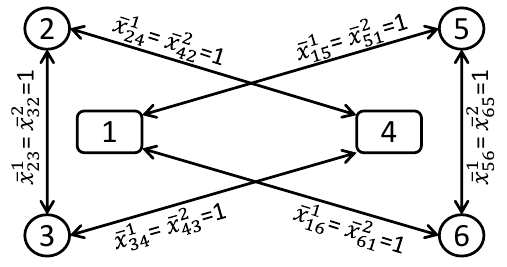}}
    		}
    		\caption{Feasible solutions with violated connectivity inequalities for a network. \label{fig:connect}}
    		{\footnotesize Demand: $w_{15}=w_{16}=w_{56}=w_{51}=w_{61}=w_{65}=w_{23}=w_{24}=w_{34}=w_{32}=w_{42}=w_{43}=1$, $w_{ij}=0$, otherwise.}
    	\end{figure}
    	
    	\begin{figure}[H]
            \centering
    		{\includegraphics[width = .75\textwidth, keepaspectratio]{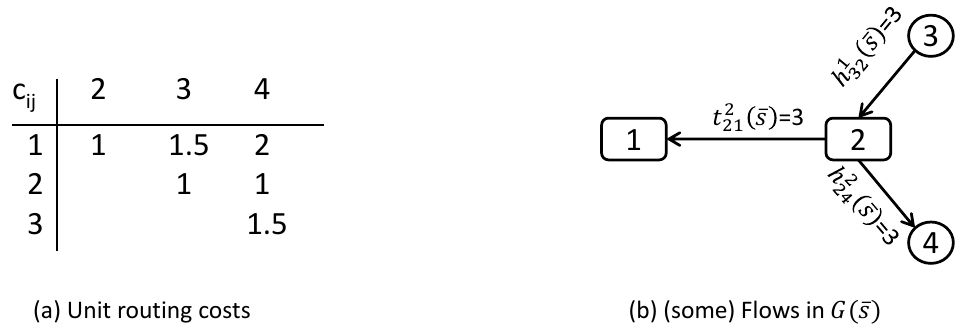}}
    		\caption{Illustration of flows through access arcs \label{ex:2}}
    	\end{figure}

        \begin{table}[htbp]
            \scriptsize
            \centering
            \resizebox{.95\textwidth}{!}{
                 \begin{tabular}{c|c|c|rr@{.}l|rrr@{.}l|rrrr@{.}l||rr@{.}l|rrr@{.}l}
                    &		&		&	\multicolumn{12}{c}{$H$-median}	&	\multicolumn{7}{c}{$G$-median}\\ [5pt]
                    &	 	&		&	\multicolumn{3}{c|}{BS}	&	\multicolumn{4}{c|}{BSF}	&	\multicolumn{5}{c||}{BSO}	&	\multicolumn{3}{c|}{BS}	&	\multicolumn{4}{c}{BSF}	\\
            [2pt]
            \hline
                &	$n$	&	$\alpha$	&	\multicolumn{1}{c}{\#Exp}	&	\multicolumn{2}{c|}{cpu}	&	\multicolumn{1}{c}{\#Exp}	&		 \multicolumn{1}{c}{Feas}	 &\multicolumn{2}{c|}{cpu} &	\multicolumn{1}{c}{\#Exp}	&	\multicolumn{1}{c}{Acc.}&	 \multicolumn{1}{c}{Int.} & \multicolumn{2}{c||}{cpu}	 &	\multicolumn{1}{c}{\#Exp}	&	\multicolumn{2}{c|}{cpu} &		 \multicolumn{1}{c}{\#Exp}	&		 \multicolumn{1}{c}{Feas}	 &\multicolumn{2}{c}{cpu}\\
            [2pt]
            \hline
            {SA}                    &	                25 	&	0.2	&	   1	&	   0&08	&	   1	&	   1	&    0&13 		&	   1  &		0	&	   0   	&	   0&23  &	1  &	0&08    &   1   &   1   &   0&14\\
                                    &	                    &	0.5	&	   1	&	   0&10	&	   1	&	   1	&    0&14 		&	   1  &		0	&	   0   	&	   0&20  &	1  &	0&09    &   1   &   1   &   0&13\\
                                    &	                    &	0.8	&	   1	&	   0&10	&	   1	&	   1	&    0&16 		&	   1  &		0	&	   0   	&	   0&19  &	1  &	0&14    &   1   &   1   &   0&22\\
            \cdashline{2-22}[0.5pt/3pt]
                                    &	                50 	&	0.2	&	   1	&	   0&67	&	   1	&	   1	&    0&98 		&	   1  &		0	&	   0   	&	   2&45  &	1  &	0&96    &   1   &   1   &   100&81\\
                                    &	                    &	0.5	&	   1	&	   1&02	&	   1	&	   1	&    1&60 		&	   1  &		0	&	   0   	&	   2&63  &	1  &	1&27    &   1   &   1   &   101&14\\
                                    &	                    &	0.8	&	   1	&	   1&40	&	  11	&	   3	&  309&59 		&	   1  &		0	&	 118   	&	  10&81  &	1  &	7&45    &   1   &   1   &   101&42\\
            \cdashline{2-22}[0.5pt/3pt]
                                    &	                75 	&	0.2	&	   7	&	 145&18	&	   9	&	   1	&  282&16 		&	   7  &		0	&	  26   	&	  99&04  &	  9 &	79&07   &  29   &   1   &   141&46\\	
                                    &	                    &	0.5	&	 439	&	 177&49	&	 474	&	   6	&  834&23 		&	 414  &		0	&	 418   	&	 254&69  &	499 &	205&65  &  1463 &   8   &   941&96\\
                                    &	                    &	0.8	&	 647	&	 342&91	&	 723	&	  13	& 1695&19 		&	 621  &		0	&	1106   	&	 401&64  &	250 &	297&63  &  1093 &   9   &  1111&66\\
            \cdashline{2-22}[0.5pt/3pt]
                                    & 	               100  &	0.2	&	  19	&	 200&51 &	  17	&	   2	&  479&43 		&	  17  &		0	&	  34   	&	3296&51 &   17  &	500&79  &   35  &   1   &   569&53\\	
                                    &	                    &	0.5	&	1370	&	 912&07 &	2309	&	  10	& 2759&03 		&	2567  &		0	&	1088   	&\multicolumn{2}{c||}{TL}   & 1704  &  1914&37  &   2573   &   13   &   2534&96\\
                                    &	                    &	0.8	&	5333	&	1717&02 &	6281	&	  29	& 6873&56 		&	3867  &		0	&	2588   	&\multicolumn{2}{c||}{TL}   & 4574  &  2955&68  &   3448   &   40   &   5955&06\\
            \hline
            MA                      &	                25 	&	0.2	&	   1	&	  0&18	&	    1	&	   1	&  24&84  		&	  31  &		  0	&	   6   &	   2&65  &	 1    &	 0&07  &   1   &   1   &   0&13\\
                                    &	                    &	0.5	&	   3	&	  0&92	&	    1	&	   1	&  26&28  		&	 178  &		  0	&	 170   &	   5&93  &	 3    &	 0&62  &   3   &   6   &   3&83\\
                                    &	                    &	0.8	&	  73	&	  2&52	&	   54	&	   7	&  72&30  		&	  96  &		124	&	1368   &	  15&82  &	64    &	 1&82  & 138   &  15   &  24&48\\
            \cdashline{2-22}[0.5pt/3pt]
                                    &	                50 	&	0.2	&	   1	&	  1&17	&	    1	&      1	&  611&15 		&	  16  &		  0	&	 126  &		  38&34 &	 1	  &	 0&78  &   1   &   1   &   100&83\\
                                    &	                    &	0.5	&	 214	&	 11&91	&	   28	&      7	& 1318&65 		&	 232  &		  5	&	 706  &		  46&07 &	 30	  &	 9&02  &  41   &   9   &   423&92\\
                                    &	                    &	0.8	&	 213	&	 16&87	&	   23	&      8	& 2032&00 		&	 446  &		 56	&	1270  &		  46&93 &	185	  &	21&44  & 368   &  20   &   677&97\\
            \cdashline{2-22}[0.5pt/3pt]
                                    &	                75 	&	0.2	&	   7	&	 28&64  &	 119	&     11	& 1852&94 		&	 401  &		  1	&	  92   &	 278&10   &	  17	&	   33&40  &   39   &   4   &   148&92\\
                                    &	                    &	0.5	&	 247	&	 69&07  &	 393	&     23	& 3679&57 		&	3100  &		203	&	2936   &	1799&87   &	  595	&	  133&96  &  910   &  15   &  1249&50\\
                                    &	                    &	0.8	&	1917	&	339&55  &	2505	&     65	& \multicolumn{2}{c|}{TL}  &	9503  &		337	&	7540   &\multicolumn{2}{c||}{TL}   &  2646   &	845&76  &  1892   &  129   &  \multicolumn{2}{c}{TL}    \\
            \cdashline{2-22}[0.5pt/3pt]
                                    &	               100 	&	0.2	&	  30	&	185&63  &	  41	&      4	&  515&31 		&	  40  &		  0 &	  38   &	 205&75   &	   51    &	153&64 &  102   &  24   &  1105&83   \\	
                                    &	                    &	0.5	&	 877	&	411&55  &	1335	&     19	& 2398&54 		&	1371  &		 19 &	 732   &	 803&36   &	 3017     &1699&19 & 5282   &  111   &  6997&98	      \\
                                    &	                    &	0.8	&	1100	&	690&23  &	2842	&     28	&\multicolumn{2}{c|}{TL}   &	4621  &		257 &	3980   &	3932&23   &	 4054     &6774&36 & 1419   &  87   &  \multicolumn{2}{c}{TL} 
            \end{tabular}}             
            \caption{Comparison among BS, BSO and BSF for $H$-median and $G$-median with CAB instances\label{comp-BS-HG}}
            {\footnotesize TL: Time limit}
        \end{table}
        
        \begin{table}[htbp]
            \centering
             \resizebox{1.\textwidth}{!}
            {
            \scriptsize
            \begin{tabular}{c|c|c|r@{.}lr@{.}lr@{.}lr@{.}l|cr@{.}lr@{.}l||r@{.}lr@{.}lr@{.}l||r@{.}lr@{.}lr@{.}lr@{.}l|r@{.}lr@{.}lr@{.}l}
            &&&\multicolumn{19}{c||}{SA} & \multicolumn{14}{c}{MA}\\[5pt]
               	&		&		&	\multicolumn{8}{c|}{$H$-median}	&	\multicolumn{5}{c||}{$H$-median [5] ($\alpha=0.75$)}	& \multicolumn{6}{c||}{$G$-median} &	 \multicolumn{8}{c|}{$H$-median}	&	 \multicolumn{6}{c}{$G$-median}\\[5pt]
        &$n$&$\alpha$&    \multicolumn{2}{c}{4I}	&    \multicolumn{2}{c}{3I}&    \multicolumn{2}{c}{best}&    \multicolumn{2}{c|}{BS}&   \multicolumn{1}{c}{$n$}&    \multicolumn{2}{c}{[5]}&    \multicolumn{2}{c||}{BS} & \multicolumn{2}{c}{3I}&    \multicolumn{2}{c}{best}&    \multicolumn{2}{c||}{BS}  	&   \multicolumn{2}{c}{4I} &    \multicolumn{2}{c}{3I}&    \multicolumn{2}{c}{best}&    \multicolumn{2}{c|}{BS}&    \multicolumn{2}{c}{3I}&    \multicolumn{2}{c}{best}&    \multicolumn{2}{c}{BS} \\[3pt]
        \hline  
        CAB & 25 & 0.2 &	2617&15  & 	 3&10     &	  3&37     &	   0&08     & \multicolumn{1}{c}{}&\multicolumn{2}{c}{}&\multicolumn{2}{c||}{}&	8&04    &	30&67     &	   0&08   	             &	  72&40	  &   3&98 	    &    7&94       &	   0&18      &	23&68    &	188&58   	&	   0&07 	\\
        &    & 0.5 &	2516&27  & 	 3&32     &	  3&46     &	   0&10     & \multicolumn{1}{c}{}&\multicolumn{2}{c}{}&\multicolumn{2}{c||}{}&	6&18    &	34&27     &	   0&09   	             &	  63&52	  &   1&83 	    &    5&78       &	   0&92      &	22&80    &	169&85   	&	   0&62 	 \\
        &    & 0.8 &	2622&05  & 	 2&03     &	  3&21     &	   0&10     &	\multicolumn{1}{c}{}&\multicolumn{2}{c}{}&\multicolumn{2}{c||}{}&  3&21    &	62&06     &	   0&14   	             &	  83&68	  &   3&53 	    &    9&87       &	   2&52      &	23&35    &	145&14   	&	   1&82 	 \\
        \cdashline{2-11}[0.5pt/3pt]\cdashline{17-36}[0.5pt/3pt]
        & 50 & 0.2 &	   \multicolumn{2}{c}{}    & 	 4&69     &	 79&14     &	   0&67     & \multicolumn{1}{c}{}&\multicolumn{2}{c}{}&\multicolumn{2}{c||}{}&  \multicolumn{2}{c}{}      &	 \multicolumn{2}{c}{}       &	   0&96   	             &	7142&70	  &  93&72 	    &   89&31       &	   1&17      &	 \multicolumn{2}{c}{}      &	\multicolumn{2}{c}{}     	 &0&78 	 \\
        &    & 0.5 &	   \multicolumn{2}{c}{}    & 	 5&28     &	 68&75     &	   1&02     &	\multicolumn{1}{c}{}&\multicolumn{2}{c}{}&\multicolumn{2}{c||}{}& \multicolumn{2}{c}{}      &	 \multicolumn{2}{c}{}       &	   1&27   	             &	6747&83	  &  74&77 	    &  121&46       &	  11&91      &	 \multicolumn{2}{c}{}      &	 \multicolumn{2}{c}{}     	 &9&02 	 \\
        &    & 0.8 &	   \multicolumn{2}{c}{}    & 	10&23     &	 96&66     &	   1&40     &	\multicolumn{1}{c}{}&\multicolumn{2}{c}{}&\multicolumn{2}{c||}{}&  \multicolumn{2}{c}{}      &	 \multicolumn{2}{c}{}       &	   7&45   	             &	5904&41	  &  89&23 	    &  128&37       &	  16&87      &	 \multicolumn{2}{c}{}      &	 \multicolumn{2}{c}{}     	&21&44 	 \\
        \cdashline{2-11}[0.5pt/3pt]\cdashline{17-36}[0.5pt/3pt]
        & 75 & 0.2 &	   \multicolumn{2}{c}{}    &   \multicolumn{2}{c}{}       &	  \multicolumn{2}{c}{}       &	 145&18     &	 \multicolumn{1}{c}{}&\multicolumn{2}{c}{}&\multicolumn{2}{c||}{}&  \multicolumn{2}{c}{}      &	 \multicolumn{2}{c}{}       &	  79&07   	             &    \multicolumn{2}{c}{}     &   \multicolumn{2}{c}{}   & 965&40       &	  28&64      &	 \multicolumn{2}{c}{}      &	  \multicolumn{2}{c}{}     	&	  33&40 	 \\
        &    & 0.5 &	   \multicolumn{2}{c}{}    &   \multicolumn{2}{c}{}       &	  \multicolumn{2}{c}{}       &	 177&49     &  \multicolumn{1}{c}{}&\multicolumn{2}{c}{}&\multicolumn{2}{c||}{}& 	\multicolumn{2}{c}{}      &	 \multicolumn{2}{c}{}       &	 205&65   	             &    \multicolumn{2}{c}{}     &   \multicolumn{2}{c}{}   	& 27&66       &	  69&07      &	 \multicolumn{2}{c}{}      &	  \multicolumn{2}{c}{}     	&	 133&96 	 \\
        &    & 0.8 &	   \multicolumn{2}{c}{}    &   \multicolumn{2}{c}{}       &	  \multicolumn{2}{c}{}       &	 342&91     &	 \multicolumn{1}{c}{}&\multicolumn{2}{c}{}&\multicolumn{2}{c||}{}&  \multicolumn{2}{c}{}      &	 \multicolumn{2}{c}{}       &	 297&63   	             &    \multicolumn{2}{c}{}     &   \multicolumn{2}{c}{}   & 435&22       &	 339&55      &	 \multicolumn{2}{c}{}      &	  \multicolumn{2}{c}{}     	&	 845&76 	 \\
        \cdashline{2-11}[0.5pt/3pt]\cdashline{17-36}[0.5pt/3pt]
        &100 & 0.2 &	   \multicolumn{2}{c}{}    &   \multicolumn{2}{c}{}       &	  \multicolumn{2}{c}{}       &	 200&51     &  \multicolumn{1}{c}{}&\multicolumn{2}{c}{}&\multicolumn{2}{c||}{}
        	 &	\multicolumn{2}{c}{}      &	 \multicolumn{2}{c}{}       &	 500&79   	             &    \multicolumn{2}{c}{}     &   \multicolumn{2}{c}{}   &    \multicolumn{2}{c}{}     &	 185&63      &	 \multicolumn{2}{c}{}      &	  \multicolumn{2}{c}{}     	&	 153&64 	 \\
        &    & 0.5 &	   \multicolumn{2}{c}{}    &   \multicolumn{2}{c}{}       &	  \multicolumn{2}{c}{}       &	 912&07     &    \multicolumn{1}{c}{}&\multicolumn{2}{c}{}&\multicolumn{2}{c||}{}  &	\multicolumn{2}{c}{}      &	 \multicolumn{2}{c}{}       &	1914&37   	             &    \multicolumn{2}{c}{}     &   \multicolumn{2}{c}{}   &    \multicolumn{2}{c}{}      &	 411&55      &	 \multicolumn{2}{c}{}      &	  \multicolumn{2}{c}{}     	&	1699&19 	 \\
        &    & 0.8 &	   \multicolumn{2}{c}{}    &   \multicolumn{2}{c}{}       &	  \multicolumn{2}{c}{}       &	1717&02     &	 \multicolumn{1}{c}{}&\multicolumn{2}{c}{}&\multicolumn{2}{c||}{}&  \multicolumn{2}{c}{}      &	 \multicolumn{2}{c}{}       &	2955&68   	             &    \multicolumn{2}{c}{}     &   \multicolumn{2}{c}{}   	   	&    \multicolumn{2}{c}{} &	 690&23      &	 \multicolumn{2}{c}{}      &	  \multicolumn{2}{c}{}     	&	 6774&36 	 \\[5pt] 
        \hline  
        AP& 25 & 0.2 &	3288&82  & 	 4&35     &	  8&39     &	  0&56      &	\multicolumn{1}{c}{$n$} & \multicolumn{2}{c}{[5]} & \multicolumn{2}{c||}{$BS$}  &  5&87 	   	
             & 31&63     &	   0&74   			      &	  80&49	  &   4&19 	    &    9&21       &	   1&57      &	13&20    &	132&82   	& 	   0&47		\\  
        \cline{12-16}     
        &    & 0.5 &	2647&65  & 	 9&68     &	 18&98     &	  0&14      &	100 & 12&70 & 219&13  & 4&37    &	  	41&37     &	   0&17   			      &	  62&32	  &   4&01 	    &    6&50       &	   2&36      &	12&22    &	102&68   	& 	   1&37		\\
        &    & 0.8 &	3006&73  & 	10&63     &	 13&96     &	  0&77      &	125	& 49&10 &	2322&32 & 5&18    &	52&64     &	   0&92   			      &	  67&74	  &   3&91 	    &    5&77       &	   3&44      &	13&30    &	160&40   	& 	   1&77		\\
        \cdashline{2-11}[0.5pt/3pt]\cdashline{17-36}[0.5pt/3pt]
        & 50 & 0.2 &	   \multicolumn{2}{c}{}    &   \multicolumn{2}{c}{}       &	165&73     &	  0&61   & 150 & 132&50	& 2626&91   &	\multicolumn{2}{c}{}      &	 \multicolumn{2}{c}{}       &	   0&93   			      &	   \multicolumn{2}{c}{TL} 	  & 103&07 	    &   85&90       &	  11&91      &	 \multicolumn{2}{c}{}      &\multicolumn{2}{c}{}     	& 	   0&68		\\
        &    & 0.5 &	   \multicolumn{2}{c}{}    &   \multicolumn{2}{c}{}       &	433&87     &	  1&09 & 175 &  554&50	&  5215&33     &	\multicolumn{2}{c}{}      &	 \multicolumn{2}{c}{}       &	   1&70   			      &	   \multicolumn{2}{c}{TL} 	  &  77&91 	    &   77&74       &	  11&02      &	 \multicolumn{2}{c}{}      &\multicolumn{2}{c}{}     	& 	  17&32	 	\\
        &    & 0.8 &	   \multicolumn{2}{c}{}    &   \multicolumn{2}{c}{}       &	243&77     &	  1&22   &200 & 16489&10	&   5395&12    &	 \multicolumn{2}{c}{}      &	 \multicolumn{2}{c}{}       &	  10&78   			      &	   \multicolumn{2}{c}{TL} 	  &  92&55 	    &  128&49       &	  21&23      &	 \multicolumn{2}{c}{}      &\multicolumn{2}{c}{}     	& 	  37&00  	\\
        \cdashline{2-36}[0.4pt/3pt]
        & 75 & 0.2 &	   \multicolumn{2}{c}{}    &   \multicolumn{2}{c}{}       &	  \multicolumn{2}{c}{}       &	  4&63   &\multicolumn{1}{c}{}&\multicolumn{2}{c}{}&\multicolumn{2}{c||}{}   &	\multicolumn{2}{c}{}      &	 \multicolumn{2}{c}{}       &	  10&17   			      &    \multicolumn{2}{c}{}     &   \multicolumn{2}{c}{}   	    &  474&66       &	  27&54      &	    \multicolumn{2}{c}{}      &	  \multicolumn{2}{c}{}     	& 	   4&97		\\
        &    & 0.5 &	   \multicolumn{2}{c}{}    &   \multicolumn{2}{c}{}       &	  \multicolumn{2}{c}{}       &	  7&63  &\multicolumn{1}{c}{}&\multicolumn{2}{c}{}&\multicolumn{2}{c||}{}    &	\multicolumn{2}{c}{}      &	 \multicolumn{2}{c}{}       &	  16&07   			      &    \multicolumn{2}{c}{}     &   \multicolumn{2}{c}{}   	    &  443&20       &	  38&40      &	    \multicolumn{2}{c}{}      &	  \multicolumn{2}{c}{}     	& 	  14&95		\\
        &    & 0.8 &	   \multicolumn{2}{c}{}    &   \multicolumn{2}{c}{}       &	  \multicolumn{2}{c}{}       &	 10&80   &\multicolumn{1}{c}{}&\multicolumn{2}{c}{}&\multicolumn{2}{c||}{}   &	\multicolumn{2}{c}{}      &	 \multicolumn{2}{c}{}       &	  28&93   			      &    \multicolumn{2}{c}{}     &   \multicolumn{2}{c}{}   	    &  566&16       &	  68&46      &	     \multicolumn{2}{c}{}      &	  \multicolumn{2}{c}{}     	& 	 300&17	    \\
        \cdashline{2-11}[0.5pt/3pt]\cdashline{17-36}[0.5pt/3pt]
        &100 & 0.2 &	   \multicolumn{2}{c}{}    &   \multicolumn{2}{c}{}       &	  \multicolumn{2}{c}{}       &	 29&58      & \multicolumn{1}{c}{}&\multicolumn{2}{c}{}&\multicolumn{2}{c||}{} 	& \multicolumn{2}{c}{}      &	 \multicolumn{2}{c}{}       &	  30&75   			      &    \multicolumn{2}{c}{}     &   \multicolumn{2}{c}{}   	    &    \multicolumn{2}{c}{}         &	     121&72      &	 \multicolumn{2}{c}{}      &	  \multicolumn{2}{c}{}     	& 	  18&67 		\\
        &    & 0.5 &	   \multicolumn{2}{c}{}    &   \multicolumn{2}{c}{}       &	  \multicolumn{2}{c}{}       &	 31&48      &	 \multicolumn{1}{c}{}&\multicolumn{2}{c}{}&\multicolumn{2}{c||}{}     &\multicolumn{2}{c}{}      &	 \multicolumn{2}{c}{}       &	  29&93   			      &    \multicolumn{2}{c}{}     &   \multicolumn{2}{c}{}   	    &    \multicolumn{2}{c}{}         &	     165&28      &	 \multicolumn{2}{c}{}      &	  \multicolumn{2}{c}{}     	& 	  55&86 		\\
        &    & 0.8 &	   \multicolumn{2}{c}{}    &   \multicolumn{2}{c}{}       &	  \multicolumn{2}{c}{}       &	163&34      &	 \multicolumn{1}{c}{}&\multicolumn{2}{c}{}&\multicolumn{2}{c||}{}   & \multicolumn{2}{c}{}      &	 \multicolumn{2}{c}{}       &	 134&93   			      &    \multicolumn{2}{c}{}     &   \multicolumn{2}{c}{}   	    &    \multicolumn{2}{c}{}         &	     359&88      &	 \multicolumn{2}{c}{}      &	  \multicolumn{2}{c}{}     	& 	 982&69 		\\
        \hline
            \end{tabular}}
            \caption{Comparison with best for $G$-median and $H$-median\label{comp_best:GH}}
            {\footnotesize TL: Time limit}
        \end{table}
        
        \begin{table}[htbp]
            \resizebox{.9\textwidth}{!}{
            \begin{tabular}{c|c|c|rr@{.}l|rr@{.}l||rr@{.}l|rr@{.}l}
            	&		&		&	\multicolumn{6}{c}{$H$-median} & \multicolumn{6}{c}{$G$-median}\\
                	&		&		&	\multicolumn{3}{c|}{SA}	&	 \multicolumn{3}{c||}{MA}&	\multicolumn{3}{c|}{SA}	&	 \multicolumn{3}{c}{MA}\\ [5pt]
        &$n$&$\alpha$&   \multicolumn{1}{c}{\#Exp.}	&	\multicolumn{2}{c|}{cpu}&	\multicolumn{1}{c}{\#Exp.}	&	 \multicolumn{2}{c||}{cpu}&	\multicolumn{1}{c}{\#Exp.}	&	\multicolumn{2}{c|}{cpu}&	 \multicolumn{1}{c}{\#Exp.}	&	 \multicolumn{2}{c}{cpu}\\
        [2pt]
        \hline
        CAB & 25 & 0.2 &	 1    &	   0&08  &	   1    &	  0&18	&	   1		&	   0&08	 &	   1		&	   0&07	\\
        &        & 0.5 &     1    &	   0&10  &	   3    &	  0&92	&	   1		&	   0&09	 &	   3		&	   0&62	\\
        &        & 0.8 &	 1    &	   0&10  &	  73    &	  2&52	&	   1		&	   0&14	 &	  64		&	   1&82	\\
        \cdashline{2-15}[0.5pt/3pt]
        &    50  & 0.2 &	 1    &	   0&67  &	   1    &	  1&17	&	   1		&	   0&96	 &	   1		&	   0&78	\\
        &        & 0.5 &	 1    &	   1&02  &	 214    &	 11&91	&	   1		&	   1&27	 &	  30		&	   9&02	\\
        &        & 0.8 &     1    &	   1&40  &	 213    &	 16&87	&	   1		&	   7&45	 &	 185		&	  21&44	\\
        \cdashline{2-15}[0.5pt/3pt]
        &    75  & 0.2 &	 7    &	 145&18  &	   7    &	 28&64	&	   9		&	  79&07  &	  17		&	  33&4	\\
        &        & 0.5 &   439    &	 177&49  &	 247    &	 69&07	&	 499		&	 205&65  &	 595		&	 133&96	\\
        &        & 0.8 &   647    &	 342&91  &	1917    &	339&55	&	 250		&	 297&63	 &	2646		&	 845&76	\\
        \cdashline{2-15}[0.5pt/3pt]
        &100     & 0.2 &   19    &	 200&51  &	  30    &	185&63	&	  17		&	 500&79	 &	  51		&	 153&64	\\
        &        & 0.5 & 1370    &	 912&07  &	 877    &	411&55	&	1704		&	1914&37	 &	3017		&	1699&19	\\
        &        & 0.8 & 5333    &	1717&02  &	1100    &	690&23	&	4574		&	2955&68	 &	4054		&	6774&36	\\
        \hline
        AP& 25   & 0.2 &	3    &	   0&56  &	   3    &	  1&57	&      3		&	   0&74   &     8		&	   0&47  	\\
        &        & 0.5 &	1    &	   0&14  &	   8    &	  2&36	&      1		&	   0&17   &    58		&	   1&37  	\\
        &        & 0.8 &	3    &	   0&77  &	  14    &	  3&44	&      3		&	   0&92   &    71		&	   1&77  	\\
        \cdashline{2-15}[0.5pt/3pt]
        & 50     & 0.2 &	1    &	   0&61  &	   5    &	 11&91	&      1		&	   0&93   &     1		&	   0&68  	\\
        &        & 0.5 &	1    &	   1&09  &	  25    &	 11&02	&      1		&	   1&70   &   154		&	  17&32  	\\
        &        & 0.8 &	1    &	   1&22  &	  55    &	 21&23	&      3		&	  10&78   &   488		&	  37&00  	\\
        \cdashline{2-15}[0.5pt/3pt]
        & 75     & 0.2 &	1    &	   4&63  &	  12    &	 27&54	&      1		&	  10&17   &     1		&	   4&97  	\\
        &        & 0.5 &	1    &	   7&63  &	  22    &	 38&40	&      1		&	  16&07   &    15		&	  14&95  	\\
        &        & 0.8 &	1    &	  10&80  &	  57    &	 68&46	&      1		&	  28&93   &   858		&	 300&17  	\\
        \cdashline{2-15}[0.5pt/3pt]
        &100     & 0.2 &	1    &	  29&58  &	  24    &	121&72	&      1		&	  30&75   &     1		&	  18&67  	\\
        &        & 0.5 &	1    &	  31&48  &	 165    &	165&28	&      1		&	  29&93   &    38		&	  55&86  	\\
        &        & 0.8 &   15    &	 163&34  &	 454    &	314&52	&     11		&	 134&93   &  1779		&	 982&69  	\\
        \cdashline{2-15}[0.5pt/3pt]
        &125     & 0.2 &	1    &	  60&99  &	  41    &	177&60  & 	   1		&	  63&22   &	    1		&	  74&31	\\
        &        & 0.5 &	1    &	 146&10  &	 174    &	387&32  & 	   1		&	 172&88   &	   43		&	 178&15	\\
        &        & 0.8 &  504    &	 771&24  &	 388    &	597&73  & 	 523		&	 775&31   &	  839		&	1148&44	\\
        \cdashline{2-15}[0.5pt/3pt]
        &150     & 0.2 &	1    &	 195&91  &	  25    &	749&87  & 	   1		&	 158&12   &	    1		&	 222&03	\\
        &        & 0.5 &	1    &	 356&88  &	 205    &	862&87  & 	   1		&	 350&95   &	   14		&	 317&37	\\
        &        & 0.8 &	593    &	2314&57  &	 238    &	813&61  & 	1031		&	3719&71   &	 1017		&	3082&97	\\
        \cdashline{2-15}[0.5pt/3pt]
        &175     & 0.2 &	1    &	 435&75  &	   9    &	1918&21 & 	   1		&	 373&86   &	    1		&	 389&00	\\
        &        & 0.5 &	1    &	 622&35  &	  50    &	1475&44 & 	   1		&	 635&52   &	   55		&	 686&21	\\
        &        & 0.8 & 1288    &	5986&32  &	 156    &	1788&90 & 	1141		&	6958&72   &	  688		&	2365&66	\\
        \cdashline{2-15}[0.5pt/3pt]
        &200     & 0.2 &	1    &	 606&14  &	  21    &	3114&74 & 	   1		&	 840&34   &	    1		&	 869&64	\\
        &        & 0.5 &	1    &	1887&09  &	 154    &	4025&22 & 	   1		&	1387&11   &	   24		&	1540&95	\\
        &        & 0.8 &	21    &	6536&77  &	 402    &	TL\,(1&96\%)&  555		&   TL\,(2&15\%)&	  131		&   TL\,(6&33\%)\\
        \hline                                                                              	        	     		    		
            \end{tabular}}                           
            \caption{Summary of results with BS for CAB and AP instances with up to 200 nodes \label{tab:all}}
            {\footnotesize For the instances reaching the time limit (TL), the value in parenthesis is the percent optimality gap at termination.}
        \end{table}
        
        \begin{table}[htbp]
            \centering
            \scriptsize
            {
            \begin{tabular}{c|c|rr@{.}l|rr@{.}l||rr@{.}l|rr@{.}l}
                &		&	\multicolumn{6}{c}{CAB} & \multicolumn{6}{c}{AP}\\
                &		&	\multicolumn{3}{c|}{SA}	&	 \multicolumn{3}{c||}{MA}&	\multicolumn{3}{c|}{SA}	&	 \multicolumn{3}{c}{MA}\\ [5pt]
                $n$&$\alpha$&   \multicolumn{1}{c}{\#Exp.}	&	\multicolumn{2}{c|}{cpu}&	\multicolumn{1}{c}{\#Exp.}	&	 \multicolumn{2}{c||}{cpu}&	\multicolumn{1}{c}{\#Exp.}	&	\multicolumn{2}{c|}{cpu}&	 \multicolumn{1}{c}{\#Exp.}	&	 \multicolumn{2}{c}{cpu}\\
                [2pt]
                \hline
                25 & 0.2 &    64&      3&05  &   276&      2&32     &   2504&     11&26     &   2742&     13&83\\
                & 0.5 &   934&      5&78  &  1197&      6&17     &  23242&     89&30     &   9502&    131&87\\
                & 0.8 &  9866&     32&43  &  3863&     49&39     &  89224&    392&38     &  41177&   1722&12\\
                50 & 0.2 &  6656&    639&89  &  7144&    403&98     &   5414&    265&92     &   9015&    397&25\\
                & 0.5 & 35947&   1884&64  & 30775&   3308&30     &  57096&   3885&71     &  11249&   4698&33\\
                & 0.8 & 41962&   2863&64  & 23191&   6598&24     &  56575&   7101&86     &  27521&   7166&48\\
                \hline
            \end{tabular}}                           
            \caption{Summary of results for the $G-FB$ for CAB and AP instances with up to 50 nodes \label{tab:capacitated}}
        \end{table}
\newpage
\bibliographystyle{abbrvnat}.
\bibliography{Thesis_bib}

\newpage
\renewcommand{\thesection}{EC-\arabic{section}}
\setcounter{section}{0}
\setcounter{figure}{0}
\setcounter{table}{0}
\setcounter{equation}{0}
\renewcommand{\thefigure}{EC~\arabic{figure}}
\renewcommand{\thetable}{EC~\arabic{table}}
\renewcommand{\theequation}{EC~\arabic{equation}}

\section*{Electronic companion}
    This electronic companion provides supplementary material to support and complement the main manuscript. It includes additional figures and detailed tables that document the optimization process used in the computational experiments. While these materials are not essential to the core understanding of the main paper, they offer valuable insights and further context that may be of interest to the reader. Their inclusion here is intended to enhance transparency and reproducibility without overloading the main text.

\section{Formulation for the exact separation of constraints (2d)} \label{appendix:aggdmnd_form}
	Below we provide a formulation, based on that of Ageev, et.al. (2001), which produces a cutset of maximum value relative to arc capacities $-\overline Q$. We consider the following sets of decision variables:
	\begin{itemize}
		\item $\alpha_{i}\in\{0, 1\}$, $i\in V$. $\alpha_{i}=1$ if and only if node $i\in S$.
		\item $\beta_{ij}\in\{0, 1\}$, $(i,j)\in A$.  $\beta_{ij}=1$ if and only if $i\in S$ and $j\notin S$ (i.e., arc $(i,j)$ is in $\delta^+(S)$).
	\end{itemize}
	
	Then the formulation is:
	\begin{subequations}
		\begin{align}
			F_{SEP}\qquad  \max & \sum_{(i,j) \in A} (-\overline Q_{ij}) \beta_{ij} && \label{of}\\
			\text{s.t.}\qquad &  \beta_{ij}\leq \alpha_i \qquad && (i, j)\in A \label{C1}\\
			& \beta_{ij}\leq 1-\alpha_j \qquad && (i, j)\in A \label{C2}\\
			& \alpha_i \leq \alpha_j + \beta_{ij} && (i, j)\in A \label{C3}\\
			&  \alpha_{i} \in \left\{0,1\right\} \quad i\in V&&\label{domain_x}\\
			& \beta_{ij} \in \left\{0,1\right\}, \, (i, j)\in A.\label{domainz}
		\end{align}
	\end{subequations}
	
	Constraints \eqref{C1}-\eqref{C2} determine a bi-partition of the node set, $(S, S^c)$, induced by $\alpha$, together with a subset of arcs in $\delta^+(S)$, induced by $\beta$. Still, Constraints \eqref{C1}-\eqref{C2} alone, do not guarantee that the subset of arcs induced by $\beta$ coincides with $\delta^+(S)$. Specifically, those arcs with an objective function coefficient $\overline Q_{ij}<0$ will not be activated. Hence, the set of constraints  \eqref{C3} is needed in order to guarantee that all arcs in $\delta^+(S)$ are activated.

\section{Complementary figures and tables}\label{sec:Figs}
    In this section we present complementary figures and tables that may be of interest to the reader.
    \subsection{Complementary material for Section 8.2.}
        Figures \ref{fig-lp-H} and \ref{fig-lp-G} represent the comparison of linear relaxations between the classical 4- and 3-index formulations (green and blue bars, respectively) and our formulation (orange bar) on instances of up to 100 nodes. Instances are considered small for $n \in \{25, 50\}$ and medium for $n \in \{75, 100\}$.
    
        \begin{figure}[H]
        	\centering
        	{\includegraphics[width=.95\textwidth]{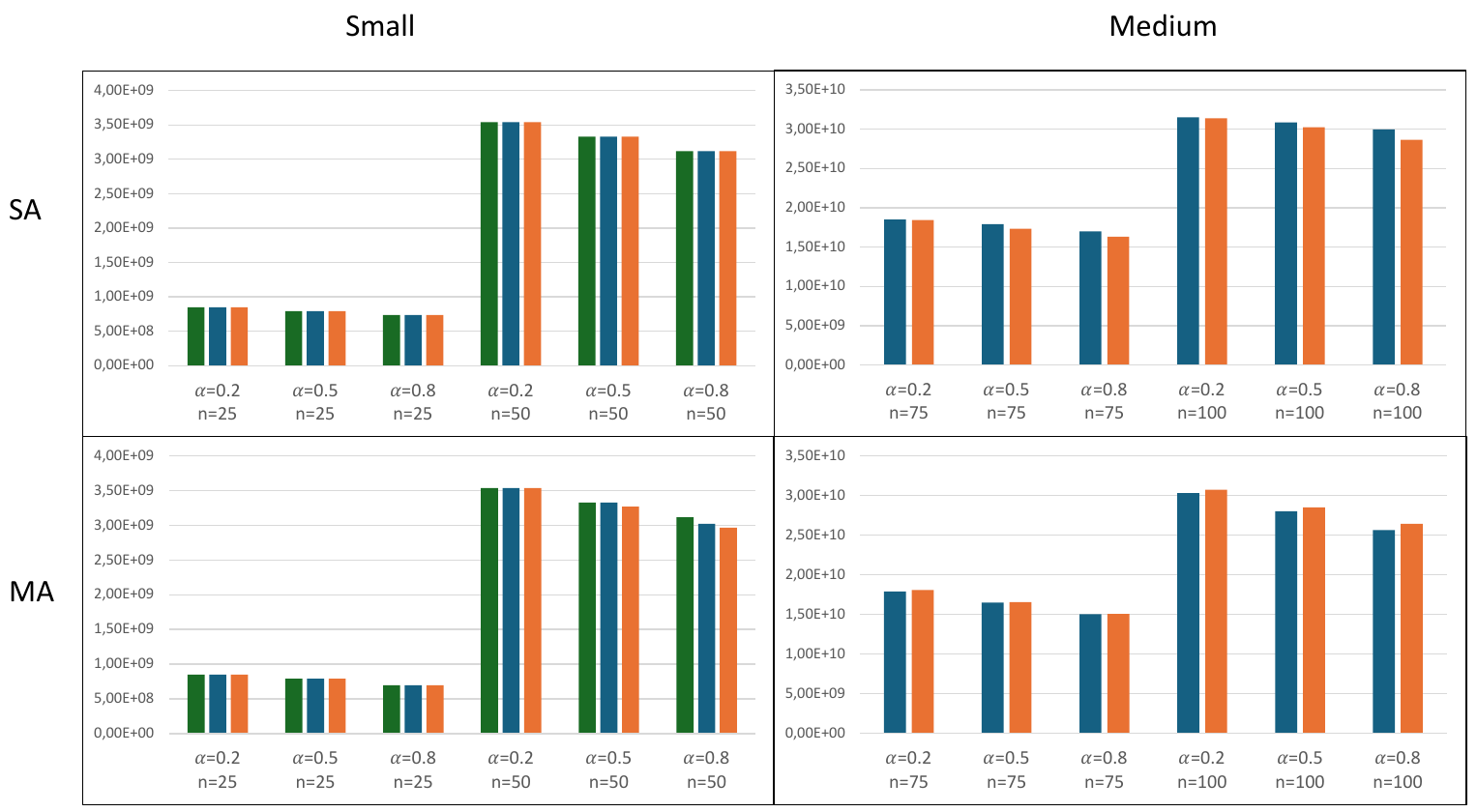}}
        	\caption{Comparison of LP bounds for the $H$-median. \label{fig-lp-H}}
        	{\footnotesize Green: 4-index; blue: 3-index; orange: 2-index}
    	\end{figure}
    
    	\begin{figure}[H]
            \centering
        	{\includegraphics[width=.95\textwidth]{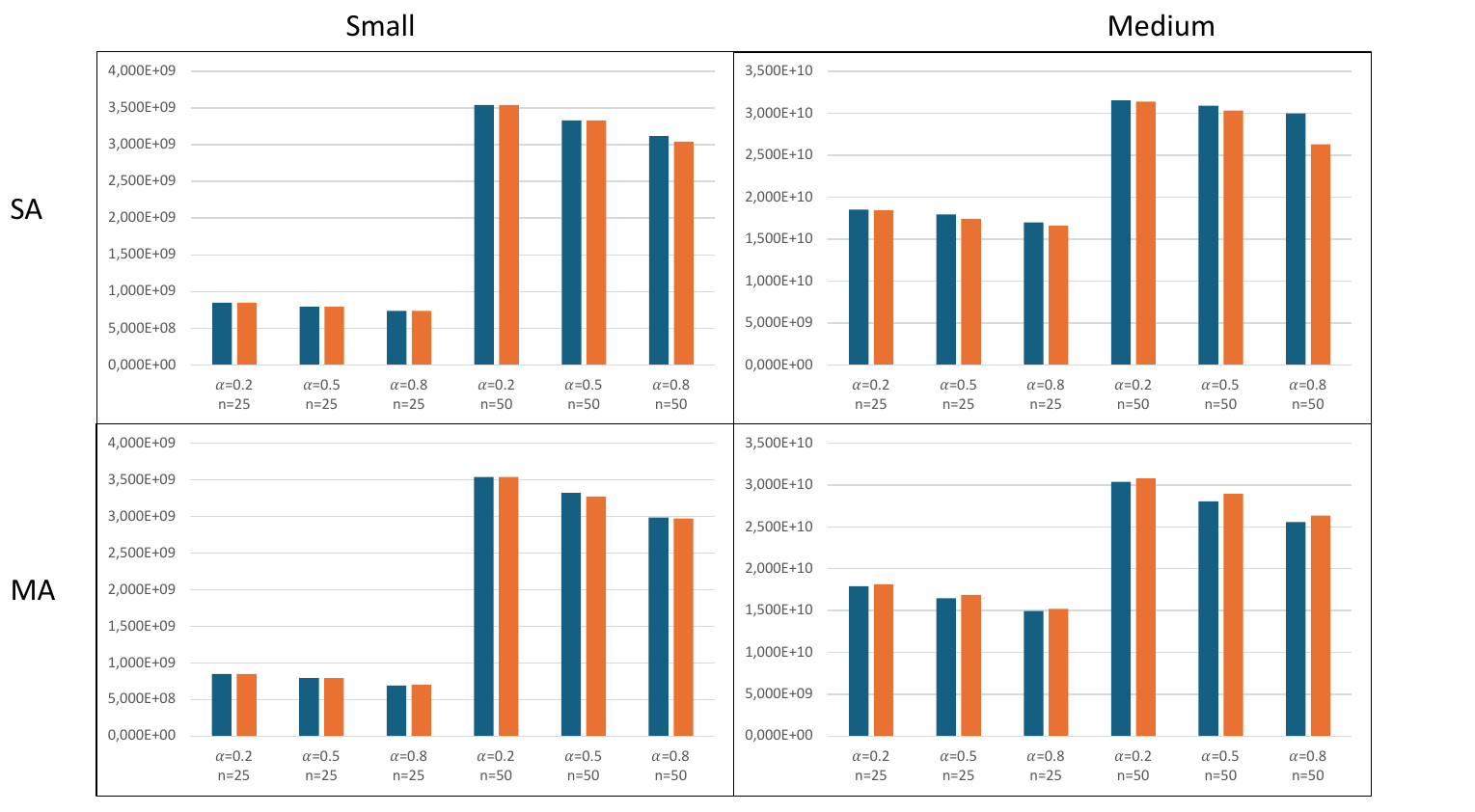}}
        	\caption{Comparison of LP bounds for $G$-median. Blue: 3-index; orange: 2-index\label{fig-lp-G}}
    	\end{figure}

    \subsection{Complementary material for Section 8.3.}
        Tables \ref{tab:SA_Hmed_CAB} to \ref{tab:MA_Gmed_AP} report optimization details for the H-median and G-median problems under their respective allocation policies. Each table corresponds to a specific dataset (``CAB'' or ``AP'') and allocation policy (``SA'' or ``MA''). The structure is the same across tables: the first two columns report instance size and discount factor, respectively. Column ``CPU (s)'' gives the total runtime, either until optimality was reached or the time limit (set to two hours). Column ``$\#Exp$'' shows the number of branching nodes explored; a value of one indicates that only the root node was explored. The column ``GAP (\%)'' reports the final optimality gap. Columns ``UB'' and ``LB'' provide the best upper and lower bounds found, respectively. With respect to the root node, column ``LB$_{root}$'' reports the LP relaxation value at root, which serves as an indicator of its quality relative to the optimal solution, and column ``CPU$_{root}$'' shows the runtime spent at root (in seconds). The last column, ``CPU$_{SP}(\%)$,'' reports the percentage of computing time, relative to the total computing time devoted to solving the shortest path algorithm, expressed as a percentage to enhance readability.

        \begin{table}[H]
            \centering    
            {
        \begin{tabular}{cc|rrrrrrrr}
        $n$ & $\alpha$ & \multicolumn{1}{c}{CPU (s)} & \multicolumn{1}{c}{$\#Exp$} & \multicolumn{1}{c}{GAP (\%)} & \multicolumn{1}{c}{UB} & \multicolumn{1}{c}{LB} & \multicolumn{1}{c}{LB$_{root}$} & \multicolumn{1}{c}{CPU$_{root}$} &  \multicolumn{1}{c}{CPU$_{SP} (\%)$} \\ \hline\hline 
        25 & $0.2$ & 0.08 & 1 & 0.00 & 8.4902$\times 10^{8}$ & 8.4902$\times 10^{8}$ & 8.4902$\times 10^{8}$ & 0.07 & 50.00 \\
         & $0.5$ & 0.10 & 1 & 0.00 & 7.9312$\times 10^{8}$ & 7.9312$\times 10^{8}$ & 7.9312$\times 10^{8}$ & 0.10 & 40.00 \\
         & $0.8$ & 0.10 & 1 & 0.00 & 7.3722$\times 10^{8}$ & 7.3722$\times 10^{8}$ & 7.3722$\times 10^{8}$ & 0.09 & 70.00 \\
        50 & $0.2$ & 0.67 & 1 & 0.00 & 3.5404$\times 10^{9}$ & 3.5404$\times 10^{9}$ & 3.5404$\times 10^{9}$ & 0.66 & 31.34 \\
         & $0.5$ & 1.02 & 1 & 0.00 & 3.4456$\times 10^{9}$ & 3.4456$\times 10^{9}$ & 3.4456$\times 10^{9}$ & 1.02 & 24.51 \\
         & $0.8$ & 1.40 & 1 & 0.00 & 3.1321$\times 10^{9}$ & 3.1321$\times 10^{9}$ & 3.1321$\times 10^{9}$ & 1.40 & 34.29 \\
        75 & $0.2$ & 145.18 & 7 & 0.00 & 1.8542$\times 10^{10}$ & 1.8542$\times 10^{10}$ & 1.8461$\times 10^{10}$ & 130.25 & 26.17 \\
         & $0.5$ & 177.49 & 439 & 0.00 & 1.8090$\times 10^{10}$ & 1.8090$\times 10^{10}$ & 1.7348$\times 10^{10}$ & 49.75 & 32.51 \\
         & $0.8$ & 342.91 & 647 & 0.00 & 1.7019$\times 10^{10}$ & 1.7019$\times 10^{10}$ & 1.6335$\times 10^{10}$ & 67.92 & 21.41 \\
        100 & $0.2$ & 200.51 & 19 & 0.00 & 3.1552$\times 10^{10}$ & 3.1552$\times 10^{10}$ & 3.1408$\times 10^{10}$ & 154.29 & 18.48 \\
         & $0.5$ & 912.07 & 1370 & 0.00 & 3.1106$\times 10^{10}$ & 3.1106$\times 10^{10}$ & 3.0260$\times 10^{10}$ & 286.57 & 17.95 \\
         & $0.8$ & 1717.02 & 5333 & 0.00 & 3.0408$\times 10^{10}$ & 3.0408$\times 10^{10}$ & 2.8641$\times 10^{10}$ & 264.91 & 26.67
        \end{tabular}}
            \caption{Results for the SA $H$-median problem for CAB dataset.\label{tab:SA_Hmed_CAB}}
        \end{table}

        \begin{table}[H]
            \centering
            {\begin{tabular}{cc|rrrrrrrr}
            $n$ & $\alpha$ & \multicolumn{1}{c}{CPU (s)} & \multicolumn{1}{c}{$\#Exp$} & \multicolumn{1}{l}{GAP (\%)} & \multicolumn{1}{c}{UB} & \multicolumn{1}{c}{LB} & \multicolumn{1}{c}{LB$_{root}$} & \multicolumn{1}{c}{CPU$_{root}$} &  \multicolumn{1}{c}{CPU$_{SP} (\%)$} \\ \hline\hline 
            25 & $0.2$ & 0.18 & 1 & 0.00 & 8.4902$\times 10^{8}$ & 8.4902$\times 10^{8}$ & 8.4902$\times 10^{8}$ & 0.18 & 16.67 \\
             & $0.5$ & 0.92 & 3 & 0.00 & 7.9312$\times 10^{8}$ & 7.9312$\times 10^{8}$ & 7.9179$\times 10^{8}$ & 0.45 & 3.26 \\
             & $0.8$ & 2.52 & 73 & 0.00 & 7.3722$\times 10^{8}$ & 7.3722$\times 10^{8}$ & 6.9768$\times 10^{8}$ & 0.90 & 2.78 \\
            50 & $0.2$ & 1.17 & 1 & 0.00 & 3.5404$\times 10^{9}$ & 3.5404$\times 10^{9}$ & 3.5404$\times 10^{9}$ & 1.17 & 14.53 \\
             & $0.5$ & 24.41 & 214 & 0.00 & 3.3297$\times 10^{9}$ & 3.3297$\times 10^{9}$ & 3.2693$\times 10^{9}$ & 3.37 & 13.27 \\
             & $0.8$ & 43.76 & 213 & 0.00 & 3.1190$\times 10^{9}$ & 3.1190$\times 10^{9}$ & 2.9661$\times 10^{9}$ & 4.72 & 11.52 \\
            75 & $0.2$ & 28.64 & 7 & 0.00 & 1.8468$\times 10^{10}$ & 1.8468$\times 10^{10}$ & 1.8094$\times 10^{10}$ & 26.31 & 12.05 \\
             & $0.5$ & 69.07 & 247 & 0.00 & 1.7603$\times 10^{10}$ & 1.7603$\times 10^{10}$ & 1.6574$\times 10^{10}$ & 17.71 & 10.05 \\
             & $0.8$ & 527.34 & 1917 & 0.00 & 1.6473$\times 10^{10}$ & 1.6473$\times 10^{10}$ & 1.5074$\times 10^{10}$ & 28.73 & 15.23 \\
            100 & $0.2$ & 185.63 & 30 & 0.00 & 3.1459$\times 10^{10}$ & 3.1459$\times 10^{10}$ & 3.0747$\times 10^{10}$ & 135.41 & 19.61 \\
             & $0.5$ & 411.55 & 877 & 0.00 & 3.0261$\times 10^{10}$ & 3.0261$\times 10^{10}$ & 2.8515$\times 10^{10}$ & 82.15 & 22.69 \\
             & $0.8$ & 690.23 & 2336 & 0.00 & 2.8462$\times 10^{10}$ & 2.8462$\times 10^{10}$ & 2.6460$\times 10^{10}$ & 115.51 & 30.60
            \end{tabular}}
            \caption{Results for the MA $H$-median problem for CAB dataset. \label{tab:MA_Hmed_CAB}}
        \end{table}

        \begin{table}[H]
            \centering
            {\begin{tabular}{cc|rrrrrrrr}
            $n$ & $\alpha$ & \multicolumn{1}{c}{CPU (s)} & \multicolumn{1}{c}{$\#Exp$} & \multicolumn{1}{c}{GAP (\%)} & \multicolumn{1}{c}{UB} & \multicolumn{1}{c}{LB} & \multicolumn{1}{c}{LB$_{root}$} & \multicolumn{1}{c}{CPU$_{root}$} & \multicolumn{1}{c}{CPU$_{SP} (\%)$} \\ \hline\hline 
            25 & $0.2$ & 0.08 & 1 & 0.00 & 8.4912$\times 10^{8}$ & 8.4912$\times 10^{8}$ & 8.4912$\times 10^{8}$ & 0.08 & 37.50 \\
             & $0.5$ & 0.09 & 1 & 0.00 & 7.9315$\times 10^{8}$ & 7.9315$\times 10^{8}$ & 7.9315$\times 10^{8}$ & 0.09 & 33.33 \\
             & $0.8$ & 0.14 & 1 & 0.00 & 7.3723$\times 10^{8}$ & 7.3723$\times 10^{8}$ & 7.3723$\times 10^{8}$ & 0.14 & 50.00 \\
            50 & $0.2$ & 0.96 & 1 & 0.00 & 3.5404$\times 10^{9}$ & 3.5404$\times 10^{9}$ & 3.5404$\times 10^{9}$ & 0.95 & 17.71 \\
             & $0.5$ & 1.27 & 1 & 0.00 & 3.3297$\times 10^{9}$ & 3.3299$\times 10^{9}$ & 3.3299$\times 10^{9}$ & 1.27 & 21.26 \\
             & $0.8$ & 7.45 & 1 & 0.00 & 3.1194$\times 10^{9}$ & 3.1194$\times 10^{9}$ & 3.1194$\times 10^{9}$ & 6.78 & 5.64 \\
            75 & $0.2$ & 79.07 & 9 & 0.00 & 1.8546$\times 10^{10}$ & 1.8546$\times 10^{10}$ & 1.8462$\times 10^{10}$ & 71.18 & 4.36 \\
             & $0.5$ & 205.65 & 499 & 0.00 & 1.8094$\times 10^{10}$ & 1.8094$\times 10^{10}$ & 1.7443$\times 10^{10}$ & 72.27 & 13.50 \\
             & $0.8$ & 297.63 & 250 & 0.00 & 1.7023$\times 10^{10}$ & 1.7023$\times 10^{10}$ & 1.7023$\times 10^{10}$ & 82.10 & 26.98 \\
            100 & $0.2$ & 500.79 & 17 & 0.00 & 3.1556$\times 10^{10}$ & 3.1556$\times 10^{10}$ & 3.1411$\times 10^{10}$ & 472.12 & 7.27 \\
             & $0.5$ & 1914.37 & 1704 & 0.00 & 3.1109$\times 10^{10}$ & 3.1109$\times 10^{10}$ & 3.0347$\times 10^{10}$ & 479.60 & 39.03 \\
             & $0.8$ & 2955.68 & 4574 & 0.00 & 3.0411$\times 10^{10}$ & 3.0411$\times 10^{10}$ & 2.6415$\times 10^{10}$ & 382.74 & 26.35
            \end{tabular}}
            \caption{Results for the SA $G$-median problem for CAB dataset. \label{tab:SA_Gmed_CAB}}
        \end{table}

        \begin{table}[H]
            \centering
            {\begin{tabular}{cc|rrrrrrrr}
            $n$ & $\alpha$ & \multicolumn{1}{c}{CPU (s)} & \multicolumn{1}{c}{$\#Exp$} & \multicolumn{1}{c}{GAP (\%)} & \multicolumn{1}{c}{UB} & \multicolumn{1}{c}{LB} & \multicolumn{1}{c}{LB$_{root}$} & \multicolumn{1}{c}{CPU$_{root}$} & \multicolumn{1}{c}{CPU$_{SP} (\%)$} \\ \hline\hline 
            25 & $0.2$ & 0.07 & 1 & 0.00 & 8.4907$\times 10^{8}$ & 8.4907$\times 10^{8}$ & 8.4902$\times 10^{8}$ & 0.07 & 57.14 \\
             & $0.5$ & 0.62 & 3 & 0.00 & 7.9316$\times 10^{8}$ & 7.9316$\times 10^{8}$ & 7.9216$\times 10^{8}$ & 0.39 & 22.58 \\
             & $0.8$ & 1.82 & 64 & 0.00 & 7.3728$\times 10^{8}$ & 7.3728$\times 10^{8}$ & 6.9891$\times 10^{8}$ & 0.65 & 18.13 \\
            50 & $0.2$ & 0.78 & 1 & 0.00 & 3.5409$\times 10^{9}$ & 3.5409$\times 10^{9}$ & 3.5404$\times 10^{9}$ & 0.78 & 21.79 \\
             & $0.5$ & 9.02 & 30 & 0.00 & 3.3301$\times 10^{9}$ & 3.3301$\times 10^{9}$ & 3.2698$\times 10^{9}$ & 2.96 & 8.87 \\
             & $0.8$ & 21.44 & 185 & 0.00 & 3.1195$\times 10^{9}$ & 3.1195$\times 10^{9}$ & 2.9668$\times 10^{9}$ & 3.93 & 5.55 \\
            75 & $0.2$ & 33.40 & 17 & 0.00 & 1.8472$\times 10^{10}$ & 1.8472$\times 10^{10}$ & 1.8308$\times 10^{10}$ & 26.41 & 11.17 \\
             & $0.5$ & 133.96 & 595 & 0.00 & 1.7607$\times 10^{10}$ & 1.7607$\times 10^{10}$ & 1.6862$\times 10^{10}$ & 31.28 & 17.11 \\
             & $0.8$ & 845.76 & 2646 & 0.00 & 1.6477$\times 10^{10}$ & 1.6477$\times 10^{10}$ & 1.5178$\times 10^{10}$ & 20.61 & 11.82 \\
            100 & $0.2$ & 153.64 & 51 & 0.00 & 3.1463$\times 10^{10}$ & 3.1463$\times 10^{10}$ & 3.1136$\times 10^{10}$ & 88.25 & 26.74 \\
             & $0.5$ & 1699.19 & 3017 & 0.00 & 3.0265$\times 10^{10}$ & 3.0265$\times 10^{10}$ & 2.9021$\times 10^{10}$ & 95.84 & 19.92 \\
             & $0.8$ & 6774.36 & 4054 & 0.00 & 2.8468$\times 10^{10}$ & 2.8468$\times 10^{10}$ & 2.6366$\times 10^{10}$ & 112.84 & 25.99
            \end{tabular}}
            \caption{Results for the MA $G$-median problem for CAB dataset. \label{tab:MA_Gmed_CAB}}
        \end{table}

        \begin{table}[H]
            \centering
            {\begin{tabular}{cc|rrrrrrrr}
            $n$ & $\alpha$ & \multicolumn{1}{c}{CPU (s)} & \multicolumn{1}{c}{$\#Exp$} & \multicolumn{1}{c}{GAP (\%)} & \multicolumn{1}{c}{UB} & \multicolumn{1}{c}{LB} & \multicolumn{1}{c}{LB$_{root}$} & \multicolumn{1}{c}{CPU$_{root}$} & \multicolumn{1}{c}{CPU$_{SP} (\%)$} \\ \hline\hline 
            25 & $0.2$ & 0.56 & 3 & 0.00 & 1.2529$\times 10^{8}$ & 1.2529$\times 10^{8}$ & 1.2517$\times 10^{8}$ & 0.37 & 34.95 \\
             & $0.5$ & 0.14 & 1 & 0.00 & 1.1809$\times 10^{8}$ & 1.1809$\times 10^{8}$ & 1.1809$\times 10^{8}$ & 0.12 & 41.43 \\
             & $0.8$ & 0.77 & 3 & 0.00 & 1.1088$\times 10^{8}$ & 1.1088$\times 10^{8}$ & 1.1064$\times 10^{8}$ & 0.52 & 37.34 \\
            50 & $0.2$ & 0.61 & 1 & 0.00 & 1.3102$\times 10^{8}$ & 1.3102$\times 10^{8}$ & 1.3102$\times 10^{8}$ & 0.60 & 31.84 \\
             & $0.5$ & 1.09 & 1 & 0.00 & 1.2305$\times 10^{8}$ & 1.2305$\times 10^{8}$ & 1.2305$\times 10^{8}$ & 1.08 & 29.11 \\
             & $0.8$ & 1.22 & 1 & 0.00 & 1.1508$\times 10^{8}$ & 1.1508$\times 10^{8}$ & 1.1508$\times 10^{8}$ & 1.21 & 26.71 \\
            75 & $0.2$ & 4.63 & 1 & 0.00 & 1.3335$\times 10^{8}$ & 1.3335$\times 10^{8}$ & 1.3335$\times 10^{8}$ & 4.61 & 12.69 \\
             & $0.5$ & 7.63 & 1 & 0.00 & 1.2512$\times 10^{8}$ & 1.2512$\times 10^{8}$ & 1.2512$\times 10^{8}$ & 7.55 & 11.80 \\
             & $0.8$ & 10.80 & 1 & 0.00 & 1.1690$\times 10^{8}$ & 1.1690$\times 10^{8}$ & 1.1690$\times 10^{8}$ & 10.72 & 34.72 \\
            100 & $0.2$ & 29.58 & 1 & 0.00 & 1.3382$\times 10^{8}$ & 1.3382$\times 10^{8}$ & 1.3382$\times 10^{8}$ & 29.50 & 15.04 \\
             & $0.5$ & 31.48 & 1 & 0.00 & 1.2545$\times 10^{8}$ & 1.2545$\times 10^{8}$ & 1.2545$\times 10^{8}$ & 31.41 & 34.31 \\
             & $0.8$ & 163.34 & 15 & 0.00 & 1.1708$\times 10^{8}$ & 1.1708$\times 10^{8}$ & 1.1674$\times 10^{8}$ & 120.84 & 22.19 \\
            125 & $0.2$ & 60.99 & 1 & 0.00 & 1.3466$\times 10^{8}$ & 1.3466$\times 10^{8}$ & 1.3466$\times 10^{8}$ & 60.87 & 32.59 \\
             & $0.5$ & 146.10 & 1 & 0.00 & 1.2661$\times 10^{8}$ & 1.2661$\times 10^{8}$ & 1.2661$\times 10^{8}$ & 146.04 & 31.60 \\
             & $0.8$ & 771.24 & 504 & 0.00 & 1.2880$\times 10^{8}$ & 1.2880$\times 10^{8}$ & 1.1329$\times 10^{8}$ & 152.71 & 33.27 \\
            150 & $0.2$ & 195.91 & 1 & 0.00 & 1.3526$\times 10^{8}$ & 1.3526$\times 10^{8}$ & 1.3526$\times 10^{8}$ & 195.79 & 34.45 \\
             & $0.5$ & 356.88 & 1 & 0.00 & 1.2714$\times 10^{8}$ & 1.2714$\times 10^{8}$ & 1.2714$\times 10^{8}$ & 356.77 & 34.78 \\
             & $0.8$ & 2314.57 & 593 & 0.00 & 1.2267$\times 10^{8}$ & 1.2267$\times 10^{8}$ & 1.1566$\times 10^{8}$ & 412.26 & 34.97 \\
            175 & $0.2$ & 435.75 & 1 & 0.00 & 1.3615$\times 10^{8}$ & 1.3615$\times 10^{8}$ & 1.3615$\times 10^{8}$ & 435.59 & 35.60 \\
             & $0.5$ & 622.35 & 1 & 0.00 & 1.2788$\times 10^{8}$ & 1.2788$\times 10^{8}$ & 1.2788$\times 10^{8}$ & 622.20 & 39.29 \\
             & $0.8$ & 5986.32 & 1288 & 0.00 & 1.3322$\times 10^{8}$ & 1.1686$\times 10^{8}$ & 1.1521$\times 10^{8}$ & 818.01 & 38.85 \\
            200 & $0.2$ & 606.14 & 1 & 0.00 & 1.3593$\times 10^{8}$ & 1.3593$\times 10^{8}$ & 1.3593$\times 10^{8}$ & 605.97 & 40.16 \\
             & $0.5$ & 1887.09 & 1 & 0.00 & 1.2783$\times 10^{8}$ & 1.2783$\times 10^{8}$ & 1.2783$\times 10^{8}$ & 1886.90 & 39.63 \\
             & $0.8$ & 636.77 & 21 & 0.00 & 1.3531$\times 10^{8}$ & 1.1552$\times 10^{8}$ & 1.1484$\times 10^{8}$ & 253.47 & 38.71
            \end{tabular}}
            \caption{Results for the SA $H$-median problem for AP dataset. \label{tab:SA_Hmed_AP}}
        \end{table}

        \begin{table}[H]
            \centering
            {\begin{tabular}{cc|rrrrrrrr}
            $n$ & $\alpha$ & \multicolumn{1}{c}{CPU (s)} & \multicolumn{1}{c}{$\#Exp$} & \multicolumn{1}{c}{GAP (\%)} & \multicolumn{1}{c}{UB} & \multicolumn{1}{c}{LB} & \multicolumn{1}{c}{LB$_{root}$} & \multicolumn{1}{c}{CPU$_{root}$} & \multicolumn{1}{c}{CPU$_{SP} (\%)$} \\ \hline\hline 
            25 & $0.2$ & 1.57 & 8 & 0.00 & 1.2529$\times 10^{8}$ & 1.2529$\times 10^{8}$ & 1.2367$\times 10^{8}$ & 0.24 & 30.74 \\
             & $0.5$ & 2.36 & 23 & 0.00 & 1.1809$\times 10^{8}$ & 1.1809$\times 10^{8}$ & 1.1410$\times 10^{8}$ & 0.44 & 27.66 \\
             & $0.8$ & 3.44 & 80 & 0.00 & 1.0923$\times 10^{8}$ & 1.0923$\times 10^{8}$ & 1.0273$\times 10^{8}$ & 0.52 & 33.30 \\
            50 & $0.2$ & 11.91 & 1 & 0.00 & 1.3102$\times 10^{8}$ & 1.3102$\times 10^{8}$ & 1.3102$\times 10^{8}$ & 0.54 & 10.45 \\
             & $0.5$ & 11.02 & 108 & 0.00 & 1.2305$\times 10^{8}$ & 1.2305$\times 10^{8}$ & 1.2049$\times 10^{8}$ & 1.97 & 20.06 \\
             & $0.8$ & 21.23 & 403 & 0.00 & 1.1508$\times 10^{8}$ & 1.1508$\times 10^{8}$ & 1.0813$\times 10^{8}$ & 3.05 & 16.14 \\
            75 & $0.2$ & 27.54 & 1 & 0.00 & 1.3335$\times 10^{8}$ & 1.3335$\times 10^{8}$ & 1.3335$\times 10^{8}$ & 4.32 & 18.28 \\
             & $0.5$ & 38.40 & 15 & 0.00 & 1.2512$\times 10^{8}$ & 1.2512$\times 10^{8}$ & 1.2342$\times 10^{8}$ & 8.16 & 12.95 \\
             & $0.8$ & 68.46 & 269 & 0.00 & 1.1690$\times 10^{8}$ & 1.1690$\times 10^{8}$ & 1.1149$\times 10^{8}$ & 11.90 & 24.09 \\
            100 & $0.2$ & 121.72 & 1 & 0.00 & 1.3382$\times 10^{8}$ & 1.3382$\times 10^{8}$ & 1.3382$\times 10^{8}$ & 14.10 & 25.89 \\
             & $0.5$ & 165.28 & 40 & 0.00 & 1.2545$\times 10^{8}$ & 1.2545$\times 10^{8}$ & 1.2346$\times 10^{8}$ & 26.04 & 27.16 \\
             & $0.8$ & 314.52 & 862 & 0.00 & 1.1708$\times 10^{8}$ & 1.1708$\times 10^{8}$ & 1.1108$\times 10^{8}$ & 51.30 & 35.32 \\
            125 & $0.2$ & 177.60 & 41 & 0.00 & 1.2269$\times 10^{8}$ & 1.2269$\times 10^{8}$ & 1.1909$\times 10^{8}$ & 128.30 & 36.45 \\
             & $0.5$ & 387.32 & 174 & 0.00 & 1.2632$\times 10^{8}$ & 1.2632$\times 10^{8}$ & 1.2024$\times 10^{8}$ & 182.54 & 33.35 \\
             & $0.8$ & 597.73 & 388 & 0.00 & 1.1797$\times 10^{8}$ & 1.1797$\times 10^{8}$ & 1.1083$\times 10^{8}$ & 181.63 & 35.97 \\
            150 & $0.2$ & 749.87 & 25 & 0.00 & 1.1714$\times 10^{8}$ & 1.1714$\times 10^{8}$ & 1.1489$\times 10^{8}$ & 444.51 & 35.94 \\
             & $0.5$ & 862.87 & 205 & 0.00 & 1.2047$\times 10^{8}$ & 1.2047$\times 10^{8}$ & 1.1552$\times 10^{8}$ & 433.11 & 38.15 \\
             & $0.8$ & 813.61 & 238 & 0.00 & 1.1903$\times 10^{8}$ & 1.1903$\times 10^{8}$ & 1.1593$\times 10^{8}$ & 382.04 & 39.78 \\
            175 & $0.2$ & 1918.21 & 9 & 0.00 & 1.1501$\times 10^{8}$ & 1.1501$\times 10^{8}$ & 1.1272$\times 10^{8}$ & 1766.02 & 38.16 \\
             & $0.5$ & 1475.44 & 50 & 0.00 & 1.1911$\times 10^{8}$ & 1.1911$\times 10^{8}$ & 1.1379$\times 10^{8}$ & 1173.69 & 38.88 \\
             & $0.8$ & 1788.90 & 156 & 0.00 & 1.2114$\times 10^{8}$ & 1.2114$\times 10^{8}$ & 1.1357$\times 10^{8}$ & 785.55 & 40.53 \\
            200 & $0.2$ & 3114.74 & 21 & 0.00 & 1.2241$\times 10^{8}$ & 1.2241$\times 10^{8}$ & 1.1949$\times 10^{8}$ & 2676.94 & 40.02 \\
             & $0.5$ & 4025.22 & 154 & 0.00 & 1.2677$\times 10^{8}$ & 1.2677$\times 10^{8}$ & 1.2020$\times 10^{8}$ & 2238.12 & 40.58 \\
             & $0.8$ & TL & 402 & 1.96 & 1.2900$\times 10^{8}$ & 1.2647$\times 10^{8}$ & 1.2047$\times 10^{8}$ & 2479.42 & 40.50
            \end{tabular}}
            \caption{Results for MA $H$-median problem for AP dataset. \label{tab:MA_Hmed_AP}}
        \end{table}

        \begin{table}[H]
            \centering
            {\begin{tabular}{cc|rrrrrrrr}
            $n$ & $\alpha$ & \multicolumn{1}{c}{CPU (s)} & \multicolumn{1}{c}{$\#Exp$} & \multicolumn{1}{c}{GAP (\%)} & \multicolumn{1}{c}{UB} & \multicolumn{1}{c}{LB} & \multicolumn{1}{c}{LB$_{root}$} & \multicolumn{1}{c}{CPU$_{root}$} & \multicolumn{1}{c}{CPU$_{SP} (\%)$} \\ \hline\hline 
            25 & $0.2$ & 0.74 & 3 & 0.00 & 1.2826$\times 10^{8}$ & 1.2826$\times 10^{8}$ & 1.2822$\times 10^{8}$ & 0.42 & 55.43 \\
             & $0.5$ & 0.17 & 1 & 0.00 & 1.2089$\times 10^{8}$ & 1.2089$\times 10^{8}$ & 1.2089$\times 10^{8}$ & 0.16 & 25.22 \\
             & $0.8$ & 0.92 & 3 & 0.00 & 1.1351$\times 10^{8}$ & 1.1351$\times 10^{8}$ & 1.1333$\times 10^{8}$ & 0.59 & 29.69 \\
            50 & $0.2$ & 0.93 & 1 & 0.00 & 1.3412$\times 10^{8}$ & 1.3412$\times 10^{8}$ & 1.3412$\times 10^{8}$ & 0.90 & 54.71 \\
             & $0.5$ & 1.70 & 1 & 0.00 & 1.2597$\times 10^{8}$ & 1.2597$\times 10^{8}$ & 1.2597$\times 10^{8}$ & 1.66 & 17.31 \\
             & $0.8$ & 10.78 & 3 & 0.00 & 1.1781$\times 10^{8}$ & 1.1781$\times 10^{8}$ & 1.1774$\times 10^{8}$ & 6.20 & 21.59 \\
            75 & $0.2$ & 10.17 & 1 & 0.00 & 1.3651$\times 10^{8}$ & 1.3651$\times 10^{8}$ & 1.3651$\times 10^{8}$ & 10.08 & 30.49 \\
             & $0.5$ & 16.07 & 1 & 0.00 & 1.2808$\times 10^{8}$ & 1.2808$\times 10^{8}$ & 1.2808$\times 10^{8}$ & 15.98 & 21.96 \\
             & $0.8$ & 28.93 & 1 & 0.00 & 1.1967$\times 10^{8}$ & 1.1967$\times 10^{8}$ & 1.1967$\times 10^{8}$ & 28.89 & 16.16 \\
            100 & $0.2$ & 30.75 & 1 & 0.00 & 1.3699$\times 10^{8}$ & 1.3699$\times 10^{8}$ & 1.3699$\times 10^{8}$ & 30.66 & 24.11 \\
             & $0.5$ & 29.93 & 1 & 0.00 & 1.2842$\times 10^{8}$ & 1.2842$\times 10^{8}$ & 1.2842$\times 10^{8}$ & 29.85 & 19.89 \\
             & $0.8$ & 134.93 & 11 & 0.00 & 1.1985$\times 10^{8}$ & 1.1985$\times 10^{8}$ & 1.1951$\times 10^{8}$ & 92.77 & 16.39 \\
            125 & $0.2$ & 63.22 & 1 & 0.00 & 1.3785$\times 10^{8}$ & 1.3785$\times 10^{8}$ & 1.3785$\times 10^{8}$ & 63.13 & 17.75 \\
             & $0.5$ & 172.88 & 1 & 0.00 & 1.2931$\times 10^{8}$ & 1.2931$\times 10^{8}$ & 1.2931$\times 10^{8}$ & 172.78 & 16.59 \\
             & $0.8$ & 775.31 & 523 & 0.00 & 1.2076$\times 10^{8}$ & 1.2076$\times 10^{8}$ & 1.1597$\times 10^{8}$ & 152.44 & 15.66 \\
            150 & $0.2$ & 158.12 & 1 & 0.00 & 1.3846$\times 10^{8}$ & 1.3846$\times 10^{8}$ & 1.3846$\times 10^{8}$ & 158.00 & 16.54 \\
             & $0.5$ & 350.95 & 1 & 0.00 & 1.3015$\times 10^{8}$ & 1.3015$\times 10^{8}$ & 1.3015$\times 10^{8}$ & 350.79 & 16.53 \\
             & $0.8$ & 3719.71 & 1031 & 0.00 & 1.2185$\times 10^{8}$ & 1.2185$\times 10^{8}$ & 1.1868$\times 10^{8}$ & 399.90 & 16.23 \\
            175 & $0.2$ & 373.86 & 1 & 0.00 & 1.3938$\times 10^{8}$ & 1.3938$\times 10^{8}$ & 1.3938$\times 10^{8}$ & 373.71 & 16.67 \\
             & $0.5$ & 635.52 & 1 & 0.00 & 1.3091$\times 10^{8}$ & 1.3091$\times 10^{8}$ & 1.3091$\times 10^{8}$ & 635.39 & 16.18 \\
             & $0.8$ & 6958.72 & 1141 & 0.00 & 1.3653$\times 10^{8}$ & 1.1985$\times 10^{8}$ & 1.1794$\times 10^{8}$ & 813.88 & 16.84 \\
            200 & $0.2$ & 840.34 & 1 & 0.00 & 1.3915$\times 10^{8}$ & 1.3915$\times 10^{8}$ & 1.3915$\times 10^{8}$ & 840.18 & 16.03 \\
             & $0.5$ & 1387.11 & 1 & 0.00 & 1.3086$\times 10^{8}$ & 1.3086$\times 10^{8}$ & 1.3086$\times 10^{8}$ & 1386.94 & 15.82 \\
             & $0.8$ & TL & 555 & 2.15 & 1.3852$\times 10^{8}$ & 1.3554$\times 10^{8}$ & 1.1757$\times 10^{8}$ & 1513.25 & 15.32
            \end{tabular}}
            \caption{Results for SA $G$-median problem for AP dataset. \label{tab:SA_Gmed_AP}}
        \end{table}
 
        \begin{table}[H]
            \centering
            {\begin{tabular}{cc|rrrrrrrr}
            $n$ & $\alpha$ & \multicolumn{1}{c}{CPU (s)} & \multicolumn{1}{c}{$\#Exp$} & \multicolumn{1}{c}{GAP (\%)} & \multicolumn{1}{c}{UB} & \multicolumn{1}{c}{LB} & \multicolumn{1}{c}{LB$_{root}$} & \multicolumn{1}{c}{CPU$_{root}$} & \multicolumn{1}{c}{CPU$_{SP} (\%)$} \\ \hline\hline 
            25 & $0.2$ & 0.47 & 8 & 0.00 & 1.2701$\times 10^{8}$ & 1.2701$\times 10^{8}$ & 1.2546$\times 10^{8}$ & 0.23 & 51.17 \\
             & $0.5$ & 1.37 & 58 & 0.00 & 1.1971$\times 10^{8}$ & 1.1971$\times 10^{8}$ & 1.1574$\times 10^{8}$ & 0.44 & 33.25 \\
             & $0.8$ & 1.77 & 71 & 0.00 & 1.1091$\times 10^{8}$ & 1.1091$\times 10^{8}$ & 1.0467$\times 10^{8}$ & 0.48 & 35.90 \\
            50 & $0.2$ & 0.68 & 1 & 0.00 & 1.3281$\times 10^{8}$ & 1.3281$\times 10^{8}$ & 1.3281$\times 10^{8}$ & 0.68 & 53.24 \\
             & $0.5$ & 17.32 & 154 & 0.00 & 1.2473$\times 10^{8}$ & 1.2473$\times 10^{8}$ & 1.2225$\times 10^{8}$ & 2.38 & 21.99 \\
             & $0.8$ & 37.00 & 488 & 0.00 & 1.1666$\times 10^{8}$ & 1.1666$\times 10^{8}$ & 1.0977$\times 10^{8}$ & 3.91 & 23.05 \\
            75 & $0.2$ & 4.97 & 1 & 0.00 & 1.3518$\times 10^{8}$ & 1.3518$\times 10^{8}$ & 1.3518$\times 10^{8}$ & 4.86 & 30.69 \\
             & $0.5$ & 14.95 & 15 & 0.00 & 1.2683$\times 10^{8}$ & 1.2683$\times 10^{8}$ & 1.2517$\times 10^{8}$ & 11.09 & 31.25 \\
             & $0.8$ & 300.17 & 858 & 0.00 & 1.1850$\times 10^{8}$ & 1.1850$\times 10^{8}$ & 1.1315$\times 10^{8}$ & 21.36 & 17.42 \\
            100 & $0.2$ & 18.67 & 1 & 0.00 & 1.3565$\times 10^{8}$ & 1.3565$\times 10^{8}$ & 1.3565$\times 10^{8}$ & 18.57 & 23.52 \\
             & $0.5$ & 55.86 & 38 & 0.00 & 1.2717$\times 10^{8}$ & 1.2717$\times 10^{8}$ & 1.2522$\times 10^{8}$ & 40.24 & 24.11 \\
             & $0.8$ & 982.69 & 1779 & 0.00 & 1.1868$\times 10^{8}$ & 1.1868$\times 10^{8}$ & 1.1251$\times 10^{8}$ & 61.74 & 17.40 \\
            125 & $0.2$ & 74.31 & 1 & 0.00 & 1.3650$\times 10^{8}$ & 1.3650$\times 10^{8}$ & 1.3650$\times 10^{8}$ & 74.20 & 18.16 \\
             & $0.5$ & 178.15 & 43 & 0.00 & 1.2805$\times 10^{8}$ & 1.2805$\times 10^{8}$ & 1.2635$\times 10^{8}$ & 97.86 & 21.29 \\
             & $0.8$ & 1148.44 & 839 & 0.00 & 1.1959$\times 10^{8}$ & 1.1959$\times 10^{8}$ & 1.1368$\times 10^{8}$ & 130.51 & 17.38 \\
            150 & $0.2$ & 222.03 & 1 & 0.00 & 1.3711$\times 10^{8}$ & 1.3711$\times 10^{8}$ & 1.3711$\times 10^{8}$ & 221.94 & 16.58 \\
             & $0.5$ & 317.37 & 14 & 0.00 & 1.2888$\times 10^{8}$ & 1.2888$\times 10^{8}$ & 1.2796$\times 10^{8}$ & 210.73 & 21.34 \\
             & $0.8$ & 3082.97 & 1017 & 0.00 & 1.2066$\times 10^{8}$ & 1.2066$\times 10^{8}$ & 1.1494$\times 10^{8}$ & 265.64 & 16.33 \\
            175 & $0.2$ & 389.00 & 1 & 0.00 & 1.3801$\times 10^{8}$ & 1.3801$\times 10^{8}$ & 1.3801$\times 10^{8}$ & 388.86 & 16.49 \\
             & $0.5$ & 686.21 & 55 & 0.00 & 1.2963$\times 10^{8}$ & 1.2963$\times 10^{8}$ & 1.2807$\times 10^{8}$ & 423.90 & 19.09 \\
             & $0.8$ & 2365.66 & 688 & 0.00 & 1.2352$\times 10^{8}$ & 1.1712$\times 10^{8}$ & 1.1412$\times 10^{8}$ & 557.24 & 17.27 \\
            200 & $0.2$ & 869.64 & 1 & 0.00 & 1.3779$\times 10^{8}$ & 1.3779$\times 10^{8}$ & 1.3779$\times 10^{8}$ & 869.46 & 16.05 \\
             & $0.5$ & 1540.95 & 24 & 0.00 & 1.2958$\times 10^{8}$ & 1.2958$\times 10^{8}$ & 1.2859$\times 10^{8}$ & 976.88 & 17.81 \\
             & $0.8$ & TL & 131 & 6.33 & 1.2430$\times 10^{8}$ & 1.1643$\times 10^{8}$ & 1.1466$\times 10^{8}$ & 1330.18 & 15.53
            \end{tabular}}
            \caption{Results for MA $G$-median problem for AP dataset. \label{tab:MA_Gmed_AP}}
        \end{table}

    \subsection{Complementary material for Section 8.6.}
        Figure \ref{fig:Gmed_CGmed_Comparison} illustrates the network design changes for an instance of the CAB dataset with $n=20$ nodes and $\alpha=0.5$, comparing the G-median and the Flow Bound G-median ($G$-FB) problems under SA and MA allocation policies. Hubs are represented as red squares and non-hubs as blue circles. The hub that differs between the G-median and $G$-FB solutions is highlighted as a yellow square to improve readability. This figure is referenced in Section 8.6 of the manuscript.

        Tables \ref{tab:matrix_SA_Gmed} to \ref{tab:matrix_SAFB_Gmed} show optimal flows for the networks shown in Figure \ref{fig:Gmed_CGmed_Comparison}.
        
        \begin{figure}[H]
            \centering
            {\includegraphics[width=1\textwidth]{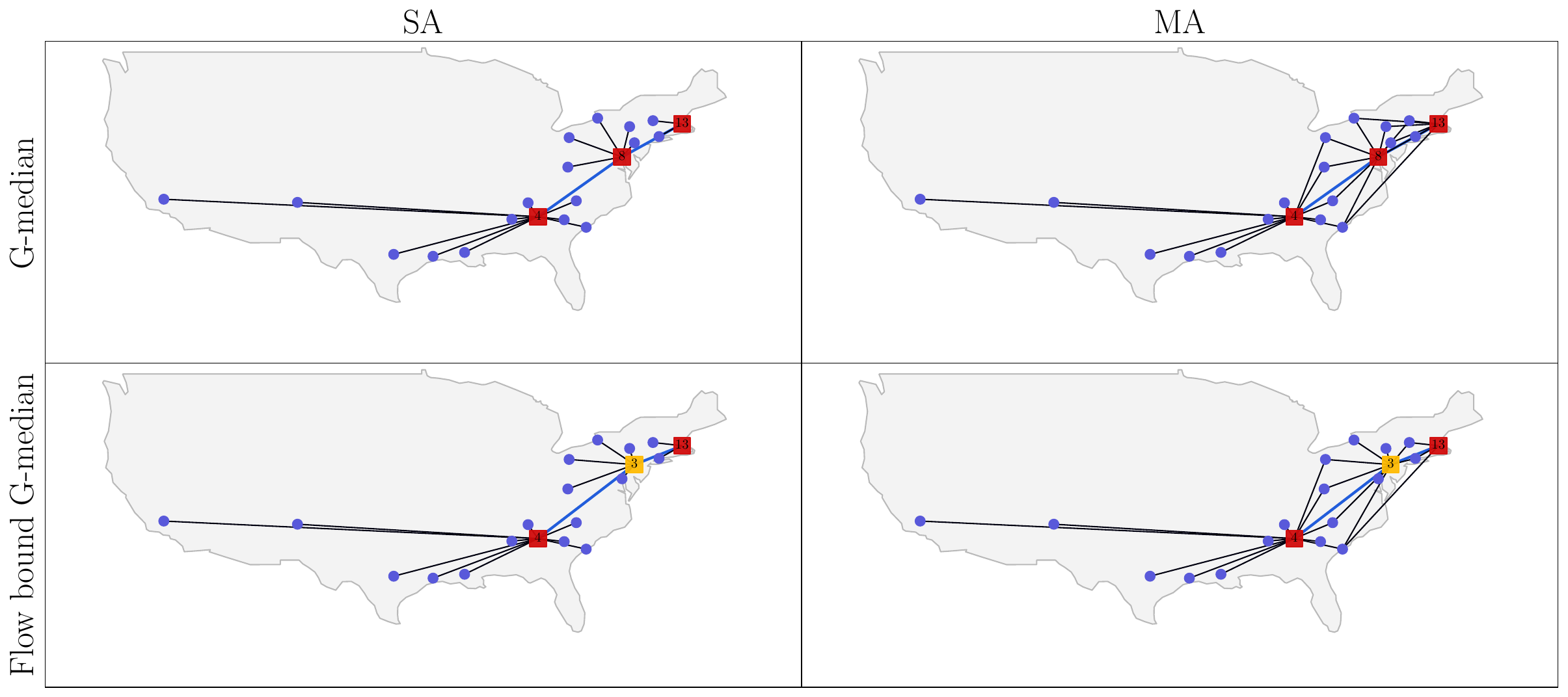}}
            \caption{Comparison of results obtained for the G-median and the Flow bound G-median on CAB dataset. \label{fig:Gmed_CGmed_Comparison}}
            {\footnotesize Instance data: $n=20$, $\alpha=0.5$.}
        \end{figure}
        
        \begin{table}[H]
            \centering
            {
            \resizebox{.95\textwidth}{!}{
                \begin{tabular}{c|rrrrrrrrrrrrrrrrrrrr}
                    Flows & \multicolumn{1}{c}{0} & \multicolumn{1}{c}{1} & \multicolumn{1}{c}{2} & \multicolumn{1}{c}{3} & \multicolumn{1}{c}{4} & \multicolumn{1}{c}{5} & \multicolumn{1}{c}{6} & \multicolumn{1}{c}{7} & \multicolumn{1}{c}{8} & \multicolumn{1}{c}{9} & \multicolumn{1}{c}{10} & \multicolumn{1}{c}{11} & \multicolumn{1}{c}{12} & \multicolumn{1}{c}{13} & \multicolumn{1}{c}{14} & \multicolumn{1}{c}{15} & \multicolumn{1}{c}{16} & \multicolumn{1}{c}{17} & \multicolumn{1}{c}{18} & \multicolumn{1}{c}{19} \\ \hline\hline
                    0 &  &  &  &  &  &  &  &  & 244 &  &  &  &  &  &  &  &  &  &  &  \\
        1 &  &  &  &  &  &  &  &  &  &  &  &  &  & 16236 &  &  &  &  &  &  \\
        2 &  &  &  &  & 4 &  &  &  &  &  &  &  &  &  &  &  &  &  &  &  \\
        3 &  &  &  &  &  &  &  &  & 3719 &  &  &  &  &  &  &  &  &  &  &  \\
        4 &  &  & 4 &  &  & 646 & 2559 & 32 & 38438 & 2483 & 1278 &  & 14158 &  &  &  & 9727 &  & 18673 & 4887 \\
        5 &  &  &  &  & 646 &  &  &  &  &  &  &  &  &  &  &  &  &  &  &  \\
        6 &  &  &  &  & 2559 &  &  &  &  &  &  &  &  &  &  &  &  &  &  &  \\
        7 &  &  &  &  & 32 &  &  &  &  &  &  &  &  &  &  &  &  &  &  &  \\
        8 & 244 &  &  & 3719 & 38438 &  &  &  &  &  &  & 1595 &  & 5445 &  & 24128 &  & 4281 &  &  \\
        9 &  &  &  &  & 2483 &  &  &  &  &  &  &  &  &  &  &  &  &  &  &  \\
        10 &  &  &  &  & 1278 &  &  &  &  &  &  &  &  &  &  &  &  &  &  &  \\
        11 &  &  &  &  &  &  &  &  & 1595 &  &  &  &  &  &  &  &  &  &  &  \\
        12 &  &  &  &  & 14158 &  &  &  &  &  &  &  &  &  &  &  &  &  &  &  \\
        13 &  & 16236 &  &  &  &  &  &  & 5445 &  &  &  &  &  & 2589 &  &  &  &  &  \\
        14 &  &  &  &  &  &  &  &  &  &  &  &  &  & 2589 &  &  &  &  &  &  \\
        15 &  &  &  &  &  &  &  &  & 24128 &  &  &  &  &  &  &  &  &  &  &  \\
        16 &  &  &  &  & 9727 &  &  &  &  &  &  &  &  &  &  &  &  &  &  &  \\
        17 &  &  &  &  &  &  &  &  & 4281 &  &  &  &  &  &  &  &  &  &  &  \\
        18 &  &  &  &  & 18673 &  &  &  &  &  &  &  &  &  &  &  &  &  &  &  \\
        19 &  &  &  &  & 4887 &  &  &  &  &  &  &  &  &  &  &  &  &  &  & 
                \end{tabular}}
            }
            \caption{Flow solution for the SA $G$-median from Figure \ref{fig:Gmed_CGmed_Comparison}. \label{tab:matrix_SA_Gmed}}
        \end{table}
        
        \begin{table}[H]
            \centering
            {
            \resizebox{.95\textwidth}{!}{
                \begin{tabular}{c|rrrrrrrrrrrrrrrrrrrr}
                    Flows & \multicolumn{1}{c}{0} & \multicolumn{1}{c}{1} & \multicolumn{1}{c}{2} & \multicolumn{1}{c}{3} & \multicolumn{1}{c}{4} & \multicolumn{1}{c}{5} & \multicolumn{1}{c}{6} & \multicolumn{1}{c}{7} & \multicolumn{1}{c}{8} & \multicolumn{1}{c}{9} & \multicolumn{1}{c}{10} & \multicolumn{1}{c}{11} & \multicolumn{1}{c}{12} & \multicolumn{1}{c}{13} & \multicolumn{1}{c}{14} & \multicolumn{1}{c}{15} & \multicolumn{1}{c}{16} & \multicolumn{1}{c}{17} & \multicolumn{1}{c}{18} & \multicolumn{1}{c}{19} \\ \hline\hline
                    0 &  &  &  & 244 &  &  &  &  &  &  &  &  &  &  &  &  &  &  &  &  \\
        1 &  &  &  &  &  &  &  &  &  &  &  &  &  & 16236 &  &  &  &  &  &  \\
        2 &  &  &  &  & 4 &  &  &  &  &  &  &  &  &  &  &  &  &  &  &  \\
        3 & 244 &  &  &  & 38438 &  &  &  & 3738 &  &  & 1595 &  & 5445 &  & 24128 &  & 4281 &  &  \\
        4 &  &  & 4 & 38438 &  & 646 & 2559 & 32 &  & 2483 & 1278 &  & 14158 &  &  &  & 9727 &  & 18673 & 4887 \\
        5 &  &  &  &  & 646 &  &  &  &  &  &  &  &  &  &  &  &  &  &  &  \\
        6 &  &  &  &  & 2559 &  &  &  &  &  &  &  &  &  &  &  &  &  &  &  \\
        7 &  &  &  &  & 32 &  &  &  &  &  &  &  &  &  &  &  &  &  &  &  \\
        8 &  &  &  & 3738 &  &  &  &  &  &  &  &  &  &  &  &  &  &  &  &  \\
        9 &  &  &  &  & 2483 &  &  &  &  &  &  &  &  &  &  &  &  &  &  &  \\
        10 &  &  &  &  & 1278 &  &  &  &  &  &  &  &  &  &  &  &  &  &  &  \\
        11 &  &  &  & 1595 &  &  &  &  &  &  &  &  &  &  &  &  &  &  &  &  \\
        12 &  &  &  &  & 14158 &  &  &  &  &  &  &  &  &  &  &  &  &  &  &  \\
        13 &  & 16236 &  & 5445 &  &  &  &  &  &  &  &  &  &  & 2589 &  &  &  &  &  \\
        14 &  &  &  &  &  &  &  &  &  &  &  &  &  & 2589 &  &  &  &  &  &  \\
        15 &  &  &  & 24128 &  &  &  &  &  &  &  &  &  &  &  &  &  &  &  &  \\
        16 &  &  &  &  & 9727 &  &  &  &  &  &  &  &  &  &  &  &  &  &  &  \\
        17 &  &  &  & 4281 &  &  &  &  &  &  &  &  &  &  &  &  &  &  &  &  \\
        18 &  &  &  &  & 18673 &  &  &  &  &  &  &  &  &  &  &  &  &  &  &  \\
        19 &  &  &  &  & 4887 &  &  &  &  &  &  &  &  &  &  &  &  &  &  & 
                \end{tabular}}
            }
            \caption{Flow solution for the SA $GHLP$ with flow bounds ($G-FB$) from Figure \ref{fig:Gmed_CGmed_Comparison}.\label{tab:matrix_SAFB_Gmed}}
        \end{table}
        
        \begin{table}[H]
            \centering
            {
            \resizebox{.95\textwidth}{!}{
                \begin{tabular}{c|rrrrrrrrrrrrrrrrrrrr}
                    Flows & \multicolumn{1}{c}{0} & \multicolumn{1}{c}{1} & \multicolumn{1}{c}{2} & \multicolumn{1}{c}{3} & \multicolumn{1}{c}{4} & \multicolumn{1}{c}{5} & \multicolumn{1}{c}{6} & \multicolumn{1}{c}{7} & \multicolumn{1}{c}{8} & \multicolumn{1}{c}{9} & \multicolumn{1}{c}{10} & \multicolumn{1}{c}{11} & \multicolumn{1}{c}{12} & \multicolumn{1}{c}{13} & \multicolumn{1}{c}{14} & \multicolumn{1}{c}{15} & \multicolumn{1}{c}{16} & \multicolumn{1}{c}{17} & \multicolumn{1}{c}{18} & \multicolumn{1}{c}{19} \\ \hline\hline
                    0 &  &  &  &  & 671 &  &  &  & 1733 &  &  &  &  &  &  &  &  &  &  &  \\
        1 &  &  &  &  &  &  &  &  & 326 &  &  &  &  & 12976 &  &  &  &  &  &  \\
        2 &  &  &  &  & 4 &  &  &  &  &  &  &  &  &  &  &  &  &  &  &  \\
        3 &  &  &  &  &  &  &  &  & 1165 &  &  &  &  & 2554 &  &  &  &  &  &  \\
        4 & 671 &  & 4 &  &  & 646 & 2559 & 32 & 27153 & 2483 & 1278 &  & 14158 &  &  &  & 6641 & 1938 & 1383 & 4887 \\
        5 &  &  &  &  & 646 &  &  &  &  &  &  &  &  &  &  &  &  &  &  &  \\
        6 &  &  &  &  & 2559 &  &  &  &  &  &  &  &  &  &  &  &  &  &  &  \\
        7 &  &  &  &  & 32 &  &  &  &  &  &  &  &  &  &  &  &  &  &  &  \\
        8 & 1733 & 326 &  & 1165 & 27153 &  &  &  &  &  &  & 583 &  & 28754 & 276 & 6942 & 1723 & 2343 & 559 &  \\
        9 &  &  &  &  & 2483 &  &  &  &  &  &  &  &  &  &  &  &  &  &  &  \\
        10 &  &  &  &  & 1278 &  &  &  &  &  &  &  &  &  &  &  &  &  &  &  \\
        11 &  &  &  &  &  &  &  &  & 583 &  &  &  &  & 112 &  &  &  &  &  &  \\
        12 &  &  &  &  & 14158 &  &  &  &  &  &  &  &  &  &  &  &  &  &  &  \\
        13 &  & 12976 &  & 2554 &  &  &  &  & 28754 &  &  & 112 &  &  & 2313 & 17186 & 1363 &  &  &  \\
        14 &  &  &  &  &  &  &  &  & 276 &  &  &  &  & 2313 &  &  &  &  &  &  \\
        15 &  &  &  &  &  &  &  &  & 6942 &  &  &  &  & 17186 &  &  &  &  &  &  \\
        16 &  &  &  &  & 6641 &  &  &  & 1723 &  &  &  &  & 1363 &  &  &  &  &  &  \\
        17 &  &  &  &  & 1938 &  &  &  & 2343 &  &  &  &  &  &  &  &  &  &  &  \\
        18 &  &  &  &  & 1383 &  &  &  & 559 &  &  &  &  &  &  &  &  &  &  &  \\
        19 &  &  &  &  & 4887 &  &  &  &  &  &  &  &  &  &  &  &  &  &  & 
                \end{tabular}}
            }
            \caption{Flow solution for the MA $G$-median from Figure \ref{fig:Gmed_CGmed_Comparison}.\label{tab:matrix_MA_Gmed}}
        \end{table}
        
        \begin{table}[H]
            \centering
            {
            \resizebox{.95\textwidth}{!}{
                \begin{tabular}{c|rrrrrrrrrrrrrrrrrrrr}
                    Flows & \multicolumn{1}{c}{0} & \multicolumn{1}{c}{1} & \multicolumn{1}{c}{2} & \multicolumn{1}{c}{3} & \multicolumn{1}{c}{4} & \multicolumn{1}{c}{5} & \multicolumn{1}{c}{6} & \multicolumn{1}{c}{7} & \multicolumn{1}{c}{8} & \multicolumn{1}{c}{9} & \multicolumn{1}{c}{10} & \multicolumn{1}{c}{11} & \multicolumn{1}{c}{12} & \multicolumn{1}{c}{13} & \multicolumn{1}{c}{14} & \multicolumn{1}{c}{15} & \multicolumn{1}{c}{16} & \multicolumn{1}{c}{17} & \multicolumn{1}{c}{18} & \multicolumn{1}{c}{19} \\ \hline\hline
                    0 &  &  &  & 97 & 1434 &  &  &  &  &  &  &  &  &  &  &  &  &  &  &  \\
                    1 &  &  &  & 1399 &  &  &  &  &  &  &  &  &  & 5837 &  &  &  &  &  &  \\
                    2 &  &  &  &  & 4 &  &  &  &  &  &  &  &  &  &  &  &  &  &  &  \\
                    3 & 97 & 1399 &  &  & 27153 &  &  &  & 3738 &  &  & 1595 &  & 42366 & 26 & 24128 & 1586 & 2343 & 4947 &  \\
                    4 & 1434 &  & 4 & 27153 &  & 646 & 2559 & 32 &  & 2483 & 1278 &  & 14158 &  &  &  & 6761 & 1938 & 13726 & 4887 \\
                    5 &  &  &  &  & 646 &  &  &  &  &  &  &  &  &  &  &  &  &  &  &  \\
                    6 &  &  &  &  & 2559 &  &  &  &  &  &  &  &  &  &  &  &  &  &  &  \\
                    7 &  &  &  &  & 32 &  &  &  &  &  &  &  &  &  &  &  &  &  &  &  \\
                    8 &  &  &  & 3738 &  &  &  &  &  &  &  &  &  &  &  &  &  &  &  &  \\
                    9 &  &  &  &  & 2483 &  &  &  &  &  &  &  &  &  &  &  &  &  &  &  \\
                    10 &  &  &  &  & 1278 &  &  &  &  &  &  &  &  &  &  &  &  &  &  &  \\
                    11 &  &  &  & 1595 &  &  &  &  &  &  &  &  &  &  &  &  &  &  &  &  \\
                    12 &  &  &  &  & 14158 &  &  &  &  &  &  &  &  &  &  &  &  &  &  &  \\
                    13 &  & 5837 &  & 42366 &  &  &  &  &  &  &  &  &  &  & 2329 &  & 138 &  &  &  \\
                    14 &  &  &  & 26 &  &  &  &  &  &  &  &  &  & 2329 &  &  &  &  &  &  \\
                    15 &  &  &  & 24128 &  &  &  &  &  &  &  &  &  &  &  &  &  &  &  &  \\
                    16 &  &  &  & 1586 & 6761 &  &  &  &  &  &  &  &  & 138 &  &  &  &  &  &  \\
                    17 &  &  &  & 2343 & 1938 &  &  &  &  &  &  &  &  &  &  &  &  &  &  &  \\
                    18 &  &  &  & 4947 & 13726 &  &  &  &  &  &  &  &  &  &  &  &  &  &  &  \\
                    19 &  &  &  &  & 4887 &  &  &  &  &  &  &  &  &  &  &  &  &  &  & 
                \end{tabular}}
            }
            \caption{Flow solution for the MA $GHLP$ with flow bounds ($G-FB$)  from Figure \ref{fig:Gmed_CGmed_Comparison}.\label{tab:matrix_MAFB_Gmed}}
        \end{table}
        
        Finally, Tables \ref{tab:SA_GFB} and \ref{tab:MA_GFB} summarize computational results for the $G$-FB problem. Their structure is similar to Tables \ref{tab:SA_Gmed_CAB} to \ref{tab:MA_Hmed_AP}, with two differences: (i) both datasets are included in the same table (see column ``Dataset''), and (ii) column ``CPU$_{Aux}(\%)$'' reports the percentage of computing time,  relative to the total computing time, spent solving the subproblem for finding optimal flows associated with a given design solution.
        
        \begin{table}[H]
            \centering
            {
            \resizebox{1\textwidth}{!}{
                \begin{tabular}{ccc|rrrrrrrr}
        Dataset & $n$ & $\alpha$ & \multicolumn{1}{c}{CPU (s)} & \multicolumn{1}{c}{$\#Nod$} & \multicolumn{1}{c}{GAP (\%)} & \multicolumn{1}{c}{UB} & \multicolumn{1}{c}{LB} & \multicolumn{1}{c}{LB$_{root}$} & \multicolumn{1}{c}{CPU$_{root}$} & \multicolumn{1}{c}{CPU$_{Aux} (\%)$} \\ \hline\hline
        CAB & 25 & 0.2 & 3.50 & 64 & 0 & 4.3870$\times 10^8$ & 4.3870$\times 10^8$ & 4.2089$\times 10^8$ & 1.26 & 44.57 \\
         &  & 0.5 & 5.78 & 934 & 0 & 4.9065$\times 10^8$ & 4.9065$\times 10^8$ & 4.6200$\times 10^8$ & 0.56 & 50.87 \\
         &  & 0.8 & 32.43 & 9866 & 0 & 5.4538$\times 10^8$ & 5.4538$\times 10^8$ & 5.0341$\times 10^8$ & 0.61 & 59.73 \\
         & 50 & 0.2 & 639.89 & 6656 & 0 & 1.9739$\times 10^9$ & 1.9739$\times 10^9$ & 1.8913$\times 10^9$ & 7.77 & 14.38 \\
         &  & 0.5 & 1884.64 & 35947 & 0 & 2.1383$\times 10^9$ & 2.1383$\times 10^9$ & 1.9821$\times 10^9$ & 9.03 & 20.81 \\
         &  & 0.8 & 2863.64 & 41962 & 0 & 2.2451$\times 10^9$ & 2.2451$\times 10^9$ & 2.0194$\times 10^9$ & 12.84 & 13.50 \\ 
         \cdashline{1-11}[0.5pt/3pt]
        AP & 25 & 0.2 & 11.26 & 2504 & 0 & 7.5411$\times 10^7$ & 7.5411$\times 10^7$ & 7.1321$\times 10^7$ & 0.52 & 62.34 \\
         &  & 0.5 & 89.30 & 23242 & 0 & 8.2517$\times 10^7$ & 8.2517$\times 10^7$ & 7.4654$\times 10^7$ & 0.68 & 55.06 \\
         &  & 0.8 & 392.38 & 89224 & 0 & 8.6561$\times 10^7$ & 8.6561$\times 10^7$ & 7.4883$\times 10^7$ & 0.68 & 42.77 \\
         & 50 & 0.2 & 265.92 & 5414 & 0 & 8.1037$\times 10^7$ & 8.1037$\times 10^7$ & 7.5301$\times 10^7$ & 4.50 & 36.58 \\
         &  & 0.5 & 3885.71 & 57096 & 0 & 8.6303$\times 10^7$ & 8.6303$\times 10^7$ & 7.3954$\times 10^7$ & 5.74 & 16.52 \\
         &  & 0.8 & 7101.86 & 56575 & 0 & 9.0593$\times 10^7$ & 9.0593$\times 10^7$ & 7.5981$\times 10^7$ & 7.01 & 13.93
        \end{tabular}}
            }
            \caption{Results for the SA $GHLP$ with flow bounds ($G-FB$).\label{tab:SA_GFB}}
        \end{table}
        
        \begin{table}[H]
            \centering
            {
            \resizebox{1\textwidth}{!}{
                \begin{tabular}{ccc|rrrrrrrr}
        Dataset & $n$ & $\alpha$ & \multicolumn{1}{c}{CPU (s)} & \multicolumn{1}{c}{$\#Nod$} & \multicolumn{1}{c}{GAP (\%)} & \multicolumn{1}{c}{UB} & \multicolumn{1}{c}{LB} & \multicolumn{1}{c}{LB$_{root}$} & \multicolumn{1}{c}{CPU$_{root}$} &  \multicolumn{1}{c}{CPU$_{Aux} (\%)$} \\ \hline\hline
        CAB & 25 & 0.2 & 2.32 & 276 & 0 & 4.2888$\times 10^8$ & 4.2888$\times 10^8$ & 4.1206$\times 10^8$ & 0.44 & 59.91 \\
         &  & 0.5 & 6.17 & 1197 & 0 & 4.7172$\times 10^8$ & 4.7172$\times 10^8$ & 4.3462$\times 10^8$ & 0.71 & 53.81 \\
         &  & 0.8 & 49.39 & 3863 & 0 & 4.9162$\times 10^8$ & 4.9162$\times 10^8$ & 4.3357$\times 10^8$ & 0.61 & 43.79 \\
         & 50 & 0.2 & 403.98 & 7144 & 0 & 1.9505$\times 10^9$ & 1.9505$\times 10^9$ & 1.8265$\times 10^9$ & 4.98 & 21.63 \\
         &  & 0.5 & 3308.30 & 30775 & 0 & 2.0495$\times 10^9$ & 2.0495$\times 10^9$ & 1.7871$\times 10^9$ & 3.76 & 12.43 \\
         &  & 0.8 & 6598.24 & 23191 & 0 & 2.0744$\times 10^9$ & 2.0744$\times 10^9$ & 1.7948$\times 10^9$ & 5.60 & 8.01 \\
        \cdashline{1-11}[0.5pt/3pt]
        AP & 25 & 0.2 & 13.83 & 2742 & 0 & 7.4905$\times 10^7$ & 7.4905$\times 10^7$ & 6.9816$\times 10^7$ & 0.40 & 61.75 \\
         &  & 0.5 & 131.87 & 9502 & 0 & 7.8169$\times 10^7$ & 7.8169$\times 10^7$ & 6.5415$\times 10^7$ & 0.36 & 29.19 \\
         &  & 0.8 & 1722.12 & 41177 & 0 & 7.8726$\times 10^7$ & 7.8726$\times 10^7$ & 6.3151$\times 10^7$ & 0.65 & 14.59 \\
         & 50 & 0.2 & 397.25 & 9015 & 0 & 8.0693$\times 10^7$ & 8.0693$\times 10^7$ & 7.5083$\times 10^7$ & 2.54 & 30.58 \\
         &  & 0.5 & 4698.33 & 11249 & 0 & 8.6051$\times 10^7$ & 8.6051$\times 10^7$ & 7.9533$\times 10^7$ & 3.15 & 10.42 \\
         &  & 0.8 & 7166.48 & 27521 & 0 & 8.8683$\times 10^7$ & 8.8683$\times 10^7$ & 7.6443$\times 10^7$ & 4.05 & 10.46
        \end{tabular}}
            }
            \caption{Results for the MA $GHLP$ with flow bounds ($G-FB$). \label{tab:MA_GFB}}
        \end{table}
\end{document}